\newtheorem{theorem}{Theorem}
\newtheorem{corollary}{Corollary}
\newtheorem{definition}{Definition}
\newtheorem{lem}{Lemma}
\newtheorem{proposition}{Proposition}
\newtheorem{observation}{Observation}
\newtheorem{remark}{Remark}
\newtheorem{claim}{Claim}
\DeclareMathAlphabet{\mymathbb}{U}{BOONDOX-ds}{m}{n}
\title{Social Networks: Enumerating Maximal Community Patterns in $c$-Closed Graphs}
\author{Gabriela Bourla}
\thanks{University of Waterloo, ON, Canada.
Email: {\tt gbourla@uwaterloo.ca}} 
\author{Kaixin Wang}
\thanks{Duke University, NC, USA.
Email: {\tt kai.wang@duke.edu}}
\author{Fan Wei}
\thanks{Duke University, NC, USA. 
Email: {\tt fan.wei@duke.edu}}
\author{Runtian Zhou}
\thanks{Duke University, NC, USA. 
Email: {\tt daniel.zhou@duke.edu}}
\begin{document}

\maketitle

\begin{abstract}
Jacob Fox, C. Seshadhri, Tim Roughgarden, Fan Wei, and Nicole Wein \cite{paper1} introduced the model of $c$-closed graphs—a distribution-free model motivated by triadic closure, one of the most pervasive structural signatures of social networks. While enumerating maximal cliques in general graphs can take exponential time, it is known that in $c$-closed graphs, maximal cliques and maximal complete bipartite subgraphs can always be enumerated in polynomial time. These structures correspond to blow-ups of simple patterns: a single vertex or a single edge, with some vertices required to form cliques.
In this work, we explore a natural extension: we study maximal blow-ups of arbitrary finite graphs $H$ in $c$-closed graphs. We prove that for any fixed graph $H$, the number of maximal blow-ups of $H$ in an $n$-vertex $c$-closed graph is always bounded by a polynomial in $n$. We further investigate the case of induced blow-ups and provide a precise characterization of the graphs $H$ for which the number of maximal induced blow-ups is also polynomially bounded in $n$.
Finally, we study the analogue questions when $H$ ranges over an infinite family of graphs.
\end{abstract}

\section{Introduction}
Social networks provide a powerful framework for modeling real-world communities.
Over the past few decades there has been extensive theoretical research on the structural properties of social networks and the computational complexity of algorithms operating on them. A widely accepted consensus is that social networks exhibit distinctive features, such as heavy-tailed degree distribution, triadic closure, community-like structures, and the “small-world” property \cite{paper14, paper27, paper28, paper29}. These properties set social networks apart from the random graph $G(n,p)$ and from some arbitrary worst case graphs.  

\textbf{Triadic closure} is the intuitive idea that vertices sharing a common neighbor are more likely to be adjacent. In the context of friendship network, this means people with mutual friendships are more likely to be friends themselves.  
Empirical evidence supports triadic closure, as demonstrated in the analysis of email communications among Enron employees conducted by Fox et al.~\cite{paper1}. In this case, the vertices were Enron employees, and an edge represented at least one email sent between two employees. The model shows that as the number of common neighbors increases, the proportion of vertex pairs that are adjacent - among those with that many common neighbors - approaches one.

Over the years, numerous models of social networks have been proposed, including preferential attachment, the copying model, Kronecker graphs, and the Chung–Lu random graph model \cite{paper1, paper14, paper15, paper16, paper17, paper18, paper19, paper20, paper30}. 
Almost all social network models are generative and incorporate some of the  aforementioned distinctive properties characteristic of social networks. 

To systematically study the combinatorial properties of social networks exhibiting triadic closure—without relying on any specific generative model, Fox, Roughgarden, Seshadhri, Wei, and Wein introduced a new distribution-free graph model called the \textbf{$c$-closed graph}. This model is defined by the fundamental combinatorial property of social networks: triadic closure.

\begin{definition}
A graph $G = (V,E)$ is called \textbf{$c$-closed} for some integer $c \geq 1$ if any two non-adjacent vertices have less than $c$ common neighbors.
\end{definition}

In other words, any two vertices with at least $c$ common neighbors are adjacent. The extreme case occurs when $c=1$, in which the graph is a disjoint union of cliques. 
In the graph of the Enron emails, the closure rate increases as the number of common neighbors increases. (See Figure \ref{fig:enron}) Kvals{\o}ren showed that the $c$-closure of a graph can be computed in $O(nd^3+n^2d)$ time, which exploits another common property of social networks, a low degeneracy value $d$.

\begin{figure}[h]
    \centering
    \includegraphics[scale=0.75]{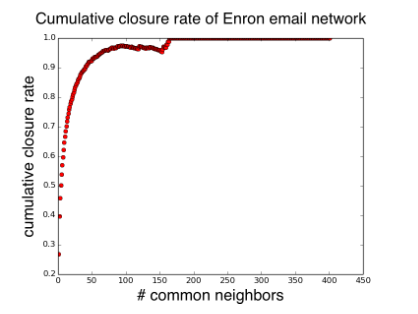}
    \caption{From Fox et al. paper, modeling triadic closure in Enron emails \cite{paper1}}
    \label{fig:enron}
\end{figure}

In the graph, the $y$-axis shows the \emph{closure rate}, which is the fraction of pairs of nodes with at least $x$ common neighbors that are themselves connected by an edge.

The goal of the program to study the new family of \(c\)-closed graphs is to understand how the additional property of \(c\)-closure influences computational complexity. While social networks serve as useful models for real-world situations, they are also expected to make certain NP-hard optimization problems easier to solve compared to worst-case graphs \cite{paper1, paper8, paper10, paper11, paper7}. Problems in P likewise benefit from this structure; for example, efficient algorithms exist for transitive closure, maximum matching, determinant computation, PageRank, and matrix inversion \cite{paper9}. Additionally, faster algorithms have been developed for computing diameter, radius, distance oracles, and identifying the most central vertices \cite{paper13}.

Community detection is a classic problem in social networks with many real-world applications, including biology and social media \cite{paper21, paper22, paper23, paper24}. It is well known that enumerating maximal cliques in arbitrary graphs is NP-hard. However, in practice, this task is often empirically fast. The concept of \(c\)-closedness provides a theoretical explanation for the gap between the worst-case complexity and the typically observed behavior in social networks, as demonstrated by the results of Fox et al. \cite{paper1}.

\begin{theorem}\label{theorem:maxcliques} \textbf{\cite{paper1}}
For positive integers $c,n$, there are at most 
\begin{equation}
    \min \{3^{(c-1)/3}n^2, 4^{(c+4)(c-1)/2}n^{2-2^{1-c}}\} 
\end{equation}
maximal cliques in any $c$-closed graph on $n$ vertices.
\end{theorem}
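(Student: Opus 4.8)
The whole argument rests on one structural fact, which I will call the \emph{Key Lemma}: in a $c$-closed graph $G$, any two distinct maximal cliques share at most $c-1$ vertices, and, more generally, every vertex $x$ lying outside a maximal clique $K$ has at most $c-1$ neighbours inside $K$. Both follow in one line from the definition: if $K\neq K'$ are maximal with $|K\cap K'|\ge c$, then $K\cup K'$ is not a clique by maximality, so there are non-adjacent $x\in K\setminus K'$, $y\in K'\setminus K$, and these have at least $|K\cap K'|\ge c$ common neighbours, contradicting $c$-closure; similarly, if $x\notin K$ had $\ge c$ neighbours in $K$, then, picking $k\in K$ not adjacent to $x$ (one exists since $K$ is maximal), the pair $x,k$ would have $\ge c$ common neighbours in $K$. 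Summing this last bound over all $x\notin K$ gives the handy inequality $\sum_{u\in K}\deg_G(u)\le(c-1)(n-|K|)+|K|(|K|-1)$. Write $f(c,n)$ for the maximum number of maximal cliques in a $c$-closed $n$-vertex graph; note $f(1,n)\le n$, as a $1$-closed graph is a disjoint union of cliques.

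The first thing I would try is a direct induction on $c$. Fixing a vertex order and charging each maximal clique $K$ to the pair $\{u,v\}$ of its two smallest vertices, the map $K\mapsto K\setminus\{u,v\}$ injects the cliques charged to $\{u,v\}$ into the maximal cliques of $G[\,N(u)\cap N(v)\cap\{w:w>v\}\,]$, which is $(c-2)$-closed; handling maximal edges and isolated vertices separately then yields $f(c,n)\le n+\binom n2\,f(c-2,n)$, while charging instead to a single smallest vertex yields $f(c,n)\le\sum_v f(c-1,\deg v)\le n\,f(c-1,n)$. The problem is that either recursion only gives a bound of shape $n^{\Theta(c)}$: a fixed vertex or edge can genuinely lie in $\Theta(n)$ maximal cliques (e.g.\ the ``book'' $K_2+\overline{K_t}$ once $c\ge 3$), so there is no $O_c(1)$-size bucket and the exponent $2$ is lost. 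The stated bounds therefore have to come from a more global count.

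To recover the exponent I would split the maximal cliques against a degree threshold $\Delta$, taking $\Delta\asymp\sqrt{cn}$ for the $n^2$ bound and $\Delta\asymp\sqrt n$ for the $n^{2-2^{1-c}}$ bound. Call a maximal clique \emph{light} if it has a vertex of degree below $\Delta$ and \emph{heavy} otherwise. Light cliques are charged to a low-degree vertex, so their number is at most $\sum_{\deg v<\Delta}f(c-1,\deg v)\le n\cdot f(c-1,\Delta)$; plugging in the inductive estimate and optimising $\Delta$ produces the $O_c(n^2)$ main term for the first bound, and in the second case the bound $n\sqrt{f(c-1,n)}$, which is exactly the recursion $e_c=1+\tfrac12 e_{c-1}$ with $e_1=1$, solving to the exponent $e_c=2-2^{1-c}$. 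For a heavy clique $K$ all degrees are $\ge\Delta$, so the degree inequality above becomes $|K|\big(\Delta-|K|\big)\le (c-1)n$, forcing $K$ to be either quite small or (for $\Delta\asymp\sqrt{cn}$) quite large; for the second bound I would instead impose a separate size threshold $L\asymp\sqrt{f(c-1,n)}$. In either case the large heavy cliques are few: since maximal cliques overlap in fewer than $c$ vertices, a greedy inclusion–exclusion shows no vertex can lie in more than $O(n/|K|)$ maximal cliques of comparable large size, and since their union sits inside $V$ this bounds their number by $O(n)$.

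The hardest case, I expect, is the remaining heavy \emph{small} cliques — those all of whose vertices have degree $\ge\Delta$ but which are too small for the spreading argument to apply. They escape both the charging argument (their vertices are high-degree) and the spreading argument, and charging them to a pair reintroduces a $\binom n2$ factor that already overshoots the second bound. I would try to control them by restricting to the subgraph induced by the high-degree vertices and iterating the threshold split there, or by a Cauchy–Schwarz count of vertex/clique incidences using the co-degree bound ``two adjacent vertices lie in at most $f(c-2,\cdot)$ common maximal cliques'' (which for $c=2$ says each edge lies in exactly one maximal clique, so the maximal cliques partition $E$ and there are at most $\binom n2+n$ of them — the base case). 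Making this last case go through with a polynomial exponent, and in particular nailing down the exact constants $3^{(c-1)/3}$ and $4^{(c+4)(c-1)/2}$ and the precise threshold $\Delta$, is the delicate part of the argument.
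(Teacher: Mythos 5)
First, a point of reference: the paper does not prove this statement at all --- Theorem \ref{theorem:maxcliques} is imported verbatim from \cite{paper1} --- so your proposal can only be judged on its own terms, and on its own terms it is an incomplete sketch rather than a proof. What you have is sound as far as it goes: both halves of your Key Lemma are correct (two maximal cliques of a $c$-closed graph meet in at most $c-1$ vertices; a vertex outside a maximal clique has at most $c-1$ neighbours in it), the degree-sum inequality follows, $G[N(v)]$ is indeed $(c-1)$-closed so the cliques through a vertex of degree below $\Delta$ number at most $n\cdot f(c-1,\Delta)$, and the recursion $e_c=1+\tfrac12 e_{c-1}$ is exactly the right explanation of the exponent $2-2^{1-c}$. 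You are also right that the naive vertex/pair charging only gives $n^{\Theta(c)}$.

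The genuine gap is the one you name yourself: the ``heavy small'' maximal cliques, all of whose vertices have degree at least $\Delta$ but whose size falls below the dichotomy threshold $\approx (c-1)n/\Delta$. None of your three counting mechanisms touches them --- they are not charged to a low-degree vertex, the spreading/double-counting argument for large cliques does not apply to them, and charging them to a pair reintroduces the factor you are trying to avoid --- and the two devices you float for repairing this (iterating the threshold inside the high-degree subgraph, or a Cauchy--Schwarz incidence count) are not carried out; the first in particular threatens to recreate the same unresolved case one level down. Until that case is closed, neither stated bound is established. A second, smaller issue is that your argument cannot produce the actual constants: $3^{(c-1)/3}$ is the Moon--Moser bound on the number of maximal cliques of a graph on $c-1$ vertices, which signals that the intended proof of the first bound localizes each maximal clique, up to an ordered pair of vertices, inside the common neighbourhood of a \emph{non-adjacent} pair (a set of fewer than $c$ vertices, by $c$-closure) and applies Moon--Moser there --- a mechanism entirely absent from your degree-threshold scheme. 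Your observation that for $c=2$ each edge lies in at most one maximal clique is correct but only yields $O(n^2)$, not the claimed $64\,n^{3/2}$, so even your proposed base case for the second bound is not yet in place.
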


Their proof can also be turned into an algorithm. In particular, in \cite{paper1} it was shown that it takes $O(p(n,c) + 3^{c/3}n^2)$ time to generate a set that contains all the maximal cliques of a given $c$-closed graph, where $p(n,c)$ is the time it takes to list all 2-paths in a $c$-closed graph with $n$ vertices. The exact set of the maximal cliques can be generated in time $O(p(n,c) + 3^{c/3}2^c cn^2)$. This complexity was further improved by Manoussakis \cite{paper6}. 

Enumerating other structures besides cliques was also  extensively investigated by Koana et. al. and Lokshtanov et. al. \cite{paper2, paper3, paper32}, where they search for sparse or dense subgraphs or dominating sets.

Theorem \ref{theorem:maxcliques} \cite{paper1} investigates the enumeration of maximal cliques in a 
$c$-closed graph. Recall that a maximal clique can be seen as a maximal “blow-up” of a single vertex into a clique. In this paper, we extend this idea by considering the enumeration of maximal blow-ups of any graph 
$H$, which corresponds to identifying communities with predetermined inter- or intra-community relationships. Further details are provided in the following subsections.

\subsection{Notation and Terminology}\label{sec: notation}

All graphs in this paper are simple, undirected, and unweighted. We denote the vertex and the edge set of a graph $G$ by $V(G)$ and $E(G)$ respectively. We denote the size of vertex and edge sets by $v(G)$ and $e(G)$, respectively. The \textbf{distance} between two distinct vertices $u,v\in V(G)$, denoted by $d(u,v)$, is the length of the shortest path between $u$ and $v$. The \textbf{neighborhood} of a vertex $v\in V(G)$ is denoted by $N_G(v)$ (or $N(v)$ if there is no confusion) and is defined as all vertices adjacent to $v$ but not including $v$. For any $r> 0$, we use $N_{\leq r}(v)$ to denote the set of vertices which are distance at most $r$ from $v$ but not including $v$. We use $\deg(v)$ to denote the degree of a vertex $v$. The minimum degree of a graph $G$ is denoted by $\delta(G)$. The maximum degree of a graph $G$ is denoted by $\Delta(G)$. The size of a maximum independent set of $G$ is denoted by $\alpha(G)$.

Two vertices $v\ne v'$ are \textbf{twins} if they share exactly the same neighbors, i.e. $N(v) \setminus v' = N(v') \setminus v$. They can, but are not required to, be adjacent to each other.
Given a graph $H$, we say two vertices $v, v'$ in $H$ are \textbf{plus twins} if $v,v'$ are adjacent to each other in $H$ and they are twins. It is easy to see that being twins defines an equivalence relation on $V(H)$. For each $v\in V(H)$, the \textbf{twin group} of $v$, denoted by $[v]$, is the set containing all twins of $v$ and $v$ itself. Similarly, being plus twins defines an equivalence relation on $V(H)$. Therefore we can define a \textbf{plus twin group} as the equivalence class under the plus twins equivalence relation. For example, if a vertex $v$ has no other plus twins, then the plus twins group of $v$ is $\{v\}$. 

In $H$, we say a vertex $i$ is \textbf{bad} if its neighbors form a clique in $H$.  In particular, if a vertex has no or one neighbors, then it is a bad vertex. By a \textbf{bad plus twin group}, we mean a plus twins group consisting of bad vertices. Clearly if $v$ is a bad vertex, and $v'$ is a plus twin of $v$, then $v'$ is also a bad vertex; and thus either every vertex in the same plus twins group is bad, or none of them are bad. 

\begin{remark}
 If two vertices $w,t\in H$ are in different twin groups but have the same adjacency to the vertices in other twin groups, then none of the following can happen: 
\begin{enumerate}
\item both $[w]$, $[t]$ are cliques in $H$ and $w,t$ are adjacent;
\item both $[w]$, $[t]$ are independent sets and $w,t$ are not adjacent.
\end{enumerate}
\end{remark}

\subsection{Main Results}

By the results of Fox et al. \cite{paper1} and Koana et al. \cite{paper2}, enumerating maximal cliques and maximal (non-induced) complete bipartite graphs becomes polynomial in the size of the host graph when the host graph is $c$-closed. Both cliques and (non-induced) complete bipartite graphs can be viewed as \textbf{blow-ups} of smaller graphs. A blow-up of a graph $H$ is a graph with a vertex partition in which each part is nonempty and corresponds to a vertex of $H$, and any two parts corresponding to adjacent vertices in $H$ are complete to each other. We now give a formal definition below.

\begin{definition}\label{def:prescribedplus}
    Given a graph $H$ on $k$ vertices and a subset $U \subset V(H)$,  a {\textbf{$U$-clique prescribed blow-up of $H$}} (or a \textbf{$(H,U)$-blow-up}) is a graph $H'$ with disjoint nonempty vertex sets $V_1 \sqcup \dots \sqcup V_k$ such that if $(i,j) \in E(H)$ then $V_i$ is complete to $V_j$ (meaning for all $v_i\in V_i$ and $v_j\in V_j$, there is an edge between $v_i$ and $v_j$), and for each $i \in U$, the set $V_i$ is a clique (there are no restrictions for $V_i$ for $i\notin U$). 
    
    In this context, we call $H$ the {\it base graph}. We call each $V_i$ a ``group" corresponding to a vertex in the base graph. When the context is clear, we will say $H'$ is a blow-up of $H$ (or an $H$-blow-up), with $U$ implicit. 
\end{definition}
For example, a clique is a blow-up of a single vertex where the blown-up vertex set is a clique; thus a clique is a $\{v\}$-prescribed blow-up of a single vertex $v$; the 
complete bipartite graph with the two bipartite classes $A\cup B$ is a blow-up of a single edge $(a,b)$ where vertex $a$ is blown up to the vertex set $A$, and the vertex $b$ is blown up to the vertex set $B$. Here we do not require either $A$ or $B$ to be a clique, and thus a complete bipartite graph is an $\emptyset$-prescribed blow-up of a single edge $(a,b)$.

Let $G$ be a simple graph. Denote the vertex set of $G$ by $V(G)$. For a subset of vertices $S\subseteq V(G)$, let $G[S]$ denotes the subgraph of $G$ induced by $S$. 
\begin{definition}\label{def:maxblow-up}
Let $G$ and $H$ be simple graphs and $S\subseteq V(G)$. We call $G[S]$ a \textbf{maximal blow-up} of $H$ in $G$, if $G[S]$ is a blow-up of $H$ and there does not exist $S\subsetneq S'\subseteq V(G)$ such that $G[S']$ is a blow-up of $H$.
\end{definition}

An example of a maximal blow-up is in Figure \ref{fig:max-def}. In this paper, we will focus on counting subsets $S\subset V(G)$ such that $G[S]$ is a maximal blow-up of $H$ on $G$.

Note that in our definition of $G[S]$ as a maximal blow-up of $H$, we do not specify the vertex partition of $S$ corresponding to the vertices of $H$. This omission poses a challenge in our analysis, as it makes it difficult to determine whether $G[S]$ is truly a maximal blow-up without a given partition. The difficulty arises because, for sets $S \subsetneq S'$ where both induce blow-ups of $H$, the vertex groups in a partition of $S$ may not align cleanly with those in a partition of $S'$—that is, a group in $S$ may not be a subset of any group in $S'$. We will elaborate on this issue in Subsection~\ref{subsec:diff}.

Intuitively, in a blow-up, each group can represent a community. The blow-up structure enforces that every person in one community is friends with every person in any adjacent community, corresponding to adjacent vertices in the original graph. For nonadjacent communities, friendships may exist but are not required. Within each prescribed community, we require that all members are mutual friends. In contrast, for non-prescribed communities, no internal friendship structure is enforced.

Enumerating community patterns is an important question in social networks. $H$-blow-ups capture flexible community patterns where groups follow a template structure $H$ but allow variable group sizes, providing a more realistic model than exact subgraph matching for social network analysis.

Beyond social networks, blow-ups arise naturally in property testing, where being $\varepsilon$-far from a property often implies the existence of a large blow-up of a forbidden subgraph, likely to be detected via random sampling. From \cite{alon-graphreg}, we can see this is closely tied to the Graph Regularity Lemmas and Removal Lemmas—foundational results in combinatorics and TCS—where regular partitions can be viewed as structured or pseudorandom samples of blow-ups. In this view, the distance from a property is witnessed by counting the patterns in such blow-ups, making them essential to both the design and analysis of testing algorithms. These results also spurred the development of graph limits. However, studying blow-ups and related questions is a lot more challenging when the underlying graph is sparse. Our work studies enumeration of blow-ups through new angles and methods, especially in the sparse graph setting in the context of social networks.
\subsubsection{Our Results}
In Section~\ref{sec: noninduced finite} we prove our first main theorem: the number of maximal blow-ups of a connected graph $H$ is always polynomial in the number of vertices of a $c$-closed graph $G$, generalizing the results of \cite{paper1, paper2} where $H$ is a single vertex or an edge. 

\begin{theorem}\label{thm:noninduced}
    Let $H$ be a graph on $k \geq 2$ vertices and $U$ a subset of $V(H)$. 
    For $c > 0$, the number of sets which form maximal $U$-prescribed blow-ups of $H$  in any $c$-closed graph on $n$ vertices is bounded above by \[n^{(\max (c-1,1))k} \cdot (\min \{3^{(c-1)k/3}n^{2k}, 4^{(c+4)(c-1)k/2}n^{(2-2^{1-c})k}\}).\]
\end{theorem}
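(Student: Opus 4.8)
The plan is to fix a maximal $(H,U)$-blow-up $G[S]$ together with one choice of partition $S = V_1 \sqcup \dots \sqcup V_k$ into groups, and to produce an injective encoding of the tuple $(V_1,\dots,V_k)$ — and hence of $S$ — by $k$ maximal cliques of $G$ together with $k$ batches of at most $\max(c-1,1)$ vertices of $G$ each. Since a $c$-closed graph on $n$ vertices has at most $M := \min\{3^{(c-1)/3}n^{2},\,4^{(c+4)(c-1)/2}n^{2-2^{1-c}}\}$ maximal cliques by Theorem \ref{theorem:maxcliques}, the number of such encodings is at most $(M\cdot n^{\max(c-1,1)})^{k} = n^{\max(c-1,1)k}\cdot M^{k}$, which is exactly the claimed bound. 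Two easy cases are peeled off first: if $c=1$ then $G$ is a disjoint union of cliques and a direct argument gives $O(n)$ maximal blow-ups; and if $H$ has an isolated vertex outside $U$ then maximality forces $S = V(G)$, so there is at most one maximal blow-up. Hence we may assume $c\ge 2$ and that every vertex of $H$ is either in $U$ or non-isolated.

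The structural heart of the argument is a dichotomy obtained from $c$-closure. Call a group light if $|V_i|\le c-1$ and heavy otherwise. If $V_i$ is heavy, then either $V_i$ is a clique, or every $V_j$ with $ij\in E(H)$ is light: a non-adjacent pair $a,b\in V_i$ has $|N_G(a)\cap N_G(b)|\le c-1$ by $c$-closure, and each such $V_j$ is complete to $\{a,b\}$, hence $|V_j|\le c-1$. Applying this a second time shows that every $H$-neighbour of a heavy clique group is itself a clique group. A light group is recorded directly as the support of a vector in $V(G)^{c-1}$, costing at most $n^{c-1}$ choices. A heavy clique group $V_i$ is greedily extended to a maximal clique $C_i\supseteq V_i$ of $G$, which is recorded at cost $M$. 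A heavy non-clique group $V_i$ (necessarily with $i\notin U$, and with all $H$-neighbours light) satisfies $V_i\subseteq\bigcap_{j\in N_H(i)}\bigcap_{w\in V_j}N_G(w)$, a common neighbourhood of a union of light groups; moreover maximality of $G[S]$ forces this common neighbourhood to be contained in $S$.

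The hard part — precisely the difficulty the authors flag in Subsection \ref{subsec:diff} concerning the omission of the partition from Definition \ref{def:maxblow-up} — is to turn "contained in $C_i$" or "contained in this common neighbourhood" into exact recovery of $V_i$, i.e.\ to control which of those vertices actually lie in $V_i$ rather than in some $H$-non-adjacent group $V_\ell$ that happens to be complete to everything $V_i$ sees. I would address this by first passing to a canonical partition of $S$ — for instance the one that is lexicographically largest in $(|V_1|,\dots,|V_k|)$ among all partitions of $S$ realizing a blow-up of $H$ — and then recovering the groups in a prescribed order (light groups first, then heavy non-clique groups, all of whose $H$-neighbours are by then known, and finally heavy clique groups), at each stage combining maximality of $G[S]$ with the already-recovered groups and the recorded batch of at most $c-1$ correction vertices to pin down the next group exactly. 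The most delicate bookkeeping occurs for heavy clique groups whose $H$-neighbours are also heavy cliques; the plus-twin and bad-vertex vocabulary developed in Section \ref{sec: notation} is presumably introduced to organize this step, by first contracting plus-twin classes of $H$ to a reduced base graph. Substituting the per-group cost $M\cdot n^{c-1}$ into the product over the $k$ groups, and bounding $n^{c-1}\le n^{\max(c-1,1)}$, yields the stated estimate.
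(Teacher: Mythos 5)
Your encoding scheme and the light/heavy dichotomy from $c$-closure are on the right track, and your cost accounting would indeed give the stated bound, but there is a genuine gap at exactly the point you flag: you never show how to recover a heavy clique group $V_i$ (or even just the set $S$) from the recorded maximal clique $C_i$ and the other data. The paragraph beginning ``I would address this by first passing to a canonical partition\dots'' is a plan, not an argument, and the plan as stated does not work: a lexicographically largest size vector only lets you absorb a neighbouring group $V_j$ into a heavy $V_i$ when $j>i$, so it does not rule out a heavy clique group whose $H$-neighbour is another heavy clique group of smaller index --- precisely the configuration you identify as ``the most delicate bookkeeping'' and then defer to an unproved plus-twin contraction. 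In that configuration the piece of $S$ you would want to isolate, $C_i\cap\bigcap_{j\sim i}\bigcap_{w\in V_j}N_G(w)$, is not computable from your record, because the neighbouring heavy group $V_j$ is itself known only up to a maximal clique containing it. (A smaller inaccuracy: ``every $H$-neighbour of a heavy clique group is itself a clique group'' is false as stated, since such a neighbour can be a light non-clique group; your dichotomy only constrains the heavy neighbours.)

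The paper closes this gap with two ideas missing from your proposal. First, it fixes the partition maximizing $\sum_i|A_i|^2$ and combines this with the observation that for $|A_i|\ge c$ the whole neighbourhood $\bigcup_{j\sim i}A_j$ is a clique (any non-adjacent pair there would have $A_i$ among its common neighbours); consequently all but one vertex of any neighbouring part could be moved into $A_i$, strictly increasing the $L_2$ norm, so in the canonical partition \emph{every} part adjacent to a heavy part is a singleton --- in particular no two heavy groups are adjacent, which kills the problematic case outright. Second, it does not attempt to recover the partition at all: it recovers only the \emph{set} $S$, writing $S$ as the union of the light parts, the common neighbourhoods $\tilde N(S_i')$ of the now-fixed singleton neighbours for heavy unprescribed parts, and $C_i\cap\tilde N(S_i')$ for heavy prescribed parts, with maximality supplying the reverse inclusion. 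Since the theorem counts vertex sets rather than partitions, this sidesteps the regrouping issue entirely. Without these two steps your argument does not go through as written.
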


We can similarly define an ``induced" blow-up of $H$ so that there are no edges between parts corresponding to non-adjacent vertices of $H$. Here we have the option to force not only friendship but also non-friendship between different communities and within each community.

\begin{definition}\label{def:prescribedplusminus}
    Given a graph $H$ on $k$ vertices and two disjoint subsets $U_+, U_- \subset V(H)$ (possibly empty),  a {\textbf{$(U_+, U_-)$--prescribed induced blow-up of $H$}} (or a \textbf{$(H,U_+,U_-)$-blow-up}) is a graph $H'$ with disjoint vertex sets $V_1 \sqcup \dots \sqcup V_k$ such that if $(i,j) \in E(H)$ then $V_i$ is complete to $V_j$, if $(i,j) \notin E(H)$ then $V_i$ is empty to $V_j$, and in addition, for each $i \in U_+$, the set $V_i$ is a clique and for each $i \in U_-$, the set $V_i$ is an independent set. We call a vertex $v \in V(H)$ {\it prescribed} if $v \in U_+ \cup U_-$, and say it is {\it un-prescribed} if otherwise. When context is clear, we will not specify $U_+$ and $U_-$ and call $H'$ a prescribed induced blow-up of $H$. 
\end{definition}

If $U_- = U_+ = \emptyset$, then a $(\emptyset, \emptyset)$--prescribed induced blow-up of $H$ is simply an induced blow-up of $H$ where we only require friendship and non-friendship between different communities.
If a group in a blow-up is prescribed to be a clique (i.e., this group is blown-up from a vertex in $U_+$), we can say that within a community all members are friends with each other. If a group is prescribed to be an independent set (i.e., this group is blown-up from a vertex in $U_-$), then all friendships are forbidden within that group. 

\

Our second main result, presented in section~\ref{sec: induced finite}, is a full characterization of when the number of maximal induced blow-ups of $H$ is polynomial in  $c$-closed graphs. Furthermore, there is a dichotomy: either $H$'s structure allows a nice regroup within a blow-up, which makes this number polynomial for every $c$, or a combinatorial explosion of valid partitions can be found, which makes this number exponential for some $c$. The characterization theorem is below; some definitions used within the theorem statement could be found right before the statement of Theorem \ref{thm:inducedchar}. 

\begin{theorem}\label{thm:inducedchar}
    Let $H$ be a graph on $k$ vertices where $k \geq 1$. Let $U_+, U_- \subseteq V(H)$ be two disjoint subsets. 
    \begin{enumerate}
        \item Suppose in $H$ there are no bad vertices, or exactly one bad twin group $B$ (note $H[B]$ is either a clique or an independent set) and at least one of the following hold:
        
        \begin{enumerate}
            \item If $H[B]$ is a clique, and there is at least one unprescribed vertex in $B$,
            \item If $H[B]$ is a clique, and $B\subseteq U_+$. This includes the case where $|B|=1$ and the unique vertex in $B$ is in $U_+$. 
            \item If $H[B]$ is an independent set, there is exactly one unprescribed vertex in $B$, and every other vertex in $B$ is in $U_-$.
            
        \end{enumerate}
        
        Then, for any $c>0$ and in any $c$-closed graph $G$ on $n$ vertices, the number of vertex sets corresponding to maximal induced $(U_+, U_-)$--prescribed blow-ups of $H$ in $G$ is at most $(2n)^k (n^22^c)^{k}$.
        \item Suppose in $H$ either there are at least two bad twin groups; or there is exactly one bad twin group $B$ and one of the following hold:
        
        \begin{enumerate}
            \item $H[B]$ is a clique and there is no unprescribed vertex in $B$, i.e., $B\subseteq U_- \sqcup U_+$, and in addition, $B\cap U_- \ne \emptyset$.
            \item $H[B]$ is an independent set and $B\subseteq U_- \sqcup U_+$ and $B\cap U_- \ne \emptyset$.

            \item $H[B]$ is an independent set, and there exists at least two vertices in $B$ that are not in $U_-$.
            
        \end{enumerate}
        
        Let $k:=v(H)$. For any $n\ge 1000k^5$, there is a $(k+1)$-closed graph $G$ with $v(G) \in \{n,n+1\}$ such that the number of vertex sets corresponding to maximal $(U_+, U_-)$--prescribed induced blow-ups of $H$ is at least $2^{-3k^2-2k+\frac n2}$. If furthermore $H$ is $c$-closed, then $G$ can be constructed to be $(c+2)$-closed.
    \end{enumerate}
\end{theorem}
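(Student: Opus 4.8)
The two parts rest on a single structural observation, which I will call the \emph{size lemma}: in any $c$-closed graph $G$ and any induced $(U_+,U_-)$--prescribed blow-up $G[S]$ of $H$ with groups $V_1,\dots,V_k$, if $i\in V(H)$ is not bad then $|V_i|\le c-1$. Indeed, since $N_H(i)$ is not a clique there are $j,\ell\in N_H(i)$ with $(j,\ell)\notin E(H)$; picking $u\in V_j$ and $w\in V_\ell$ yields a non-edge $(u,w)$ of $G$ with $V_i\subseteq N(u)\cap N(w)$, so $c$-closure forces $|V_i|<c$. Two easy companions are needed. First, two adjacent bad vertices of $H$ are automatically twins (a neighbour of a bad vertex $v$ lies in the clique $N_H(v)$, hence is adjacent to every other neighbour of $v$); consequently, if $H$ has at most one bad twin group $B$, then every vertex of $H$ outside $B$ is non-bad, so by the size lemma \emph{every} group other than $\{V_b:b\in B\}$ has fewer than $c$ vertices. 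Thus, under the hypotheses of part (1), a maximal blow-up decomposes as $S=S_{\mathrm{small}}\sqcup\mathcal C$ with $|S_{\mathrm{small}}|<ck$ and $\mathcal C=\bigcup_{b\in B}V_b$ the single ``bad cluster'' ($\mathcal C=\varnothing$ when $H$ has no bad vertices); since $H[B]$ is a clique or an independent set, $\mathcal C$ is an induced blow-up of a clique (cases 1(a),1(b)) or of an independent set (case 1(c)).

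\emph{Part (1): polynomial upper bound.} I would enumerate, with multiplicity, all maximal blow-ups by handling $S_{\mathrm{small}}$ and $\mathcal C$ separately. For $S_{\mathrm{small}}$: each non-bad group $V_i$ lies in $N(u)\cap N(w)$ for a suitable non-edge $(u,w)$, so guessing the pair ($\le n^2$ ways) and then a subset of the $(<c)$-element set $N(u)\cap N(w)$ ($\le 2^c$ ways) specifies $V_i$, giving $\le (n^2 2^c)^{k}$ choices, with a further $n^{O(k)}$ global factor to fix which vertex plays which role. For $\mathcal C$ I branch on the sub-case. In case 1(b) ($B\subseteq U_+$, $H[B]$ a clique) $\mathcal C$ is a clique of $G$, hence lies in one of the at most $3^{(c-1)/3}n^2$ maximal cliques of Theorem~\ref{theorem:maxcliques}, and is then pinned down by $S_{\mathrm{small}}$ and maximality. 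In case 1(c) the union $I:=\bigcup_{b\in B\setminus\{b_0\}}V_b$ is a single independent set, and once $I$ is fixed, maximality forces $V_{b_0}$ to consist of \emph{every} admissible vertex non-adjacent to all of $I$; crucially, the freedom to slide vertices between $V_{b_0}$ and the other groups of $B$ then forces $I$ to be as small as possible (size $|B|-1$, up to a constant), so there are only $n^{O(k)}$ choices for $\mathcal C$. Case 1(a) is similar: with $b_0\in B$ unprescribed, any vertex of a clique-group of $B$ may be moved into $V_{b_0}$, while the $c$-closure argument applied \emph{inside} $\mathcal C$ shows any two size-$\ge2$ independent (or non-clique unconstrained) groups of $B$ are small, so $\mathcal C$ is again a clique-plus-bounded-garbage structure. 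Multiplying out gives the claimed $(2n)^k(n^2 2^c)^k$.

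\emph{Part (2): exponential lower bound.} For each configuration in part (2), I would build a $(k+1)$-closed host $G$ on $n$ or $n+1$ vertices as the union of a rigid ``skeleton'' — a singleton blow-up of $H$ minus the bad twin group(s), together with singleton common-neighbour groups — and a linear-size \emph{matching gadget} placed where the bad twin group(s) are blown up. When some group in a bad twin group is forced to be a large independent set ($|B|=1$ with the vertex in $U_-$, or $B\subseteq U_-$), the gadget is a perfect matching attached to a bounded number of skeleton vertices, and its $2^{\Theta(n)}$ maximal independent sets yield $2^{\Theta(n)}$ distinct maximal blow-ups. When two groups must ``compete'' — two unprescribed/$U_+$ vertices of an independent bad twin group (2(c)), two distinct bad twin groups (whose representatives are non-adjacent, by the companion observation), or a $U_-$ and a non-$U_-$ vertex of an independent bad twin group — the gadget is two cliques (or two independent sets, to match prescriptions) joined by a perfect matching, and the $2^{\Theta(n)}$ ways to pick exactly one endpoint per matching edge give $2^{\Theta(n)}$ distinct maximal blow-ups. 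In each case one checks: (a) $G$ is $(k+1)$-closed, since any non-edge inside the gadget shares only its $\le k$ skeleton-neighbours, and if $H$ is $c$-closed the skeleton contributes at most $c$ more, giving $(c+2)$-closure; (b) each configuration is maximal — here the skeleton's rigidity rules out re-partitioning the would-be extension; (c) distinct configurations give distinct vertex sets. Accounting for the $O(k^2)$ vertices spent on the skeleton and discarding the few degenerate configurations (an empty group) yields at least $2^{n/2-3k^2-2k}$ maximal blow-ups.

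\emph{Main obstacle.} The crux on both sides is the partition ambiguity from Subsection~\ref{subsec:diff}: the same vertex set can underlie genuinely different partitions, and a maximal object can sometimes be extended by re-partitioning rather than by growing a fixed group. In part (1) this is not a nuisance but the \emph{mechanism} that collapses the count — it is exactly the ability to absorb vertices into the single unprescribed group of $B$ that makes the bad cluster almost determined in cases 1(a) and 1(c). In part (2), conversely, the construction must preclude \emph{any} re-partitioning extension of a gadget configuration, which is why the skeleton is made rigid (each non-gadget vertex has a unique legal role) and why the exponential cases use ``competing'' gadgets, where any attempted extension creates an edge that no partition can accommodate. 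A secondary technical point is keeping $G$ exactly $(k+1)$-closed (and $(c+2)$-closed when $H$ is $c$-closed) despite a $\Theta(n)$-vertex gadget; this is arranged by making every gadget vertex adjacent to only boundedly many skeleton vertices, so common neighbourhoods never grow.
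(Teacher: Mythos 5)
Your Part (1) follows essentially the same route as the paper: the $c$-closure bound on groups corresponding to non-bad vertices (your ``size lemma'' is the paper's Claim~\ref{claim:1prime} in contrapositive form), the reduction to the unique bad twin group, the normalization that slides vertices from prescribed groups of $B$ into the unprescribed one, and the maximal-clique count from Theorem~\ref{theorem:maxcliques} for the $B\subseteq U_+$ case. The sketch is thin in case 1(a) (the paper needs a separate argument when the one large group of $B$ lies in $U_-$, splitting on whether some sibling group contains a non-edge), but the mechanism you identify is the right one.

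Part (2) contains a genuine gap at your step ``(b) each configuration is maximal.'' The paper explicitly states, for exactly these matching-gadget configurations $A_S$, that it does \emph{not} have a direct proof of maximality, and it works around this. The problem is the re-partitioning issue you yourself flag as the crux: extending $A_S$ by a single gadget vertex creates exactly \emph{one} index $i$ with both $a_i,b_i$ present, and every structural constraint available (the paper's Claims of the form ``any blow-up contains at most $2k$, resp.\ $k^2$, resp.\ $3k^2$ doubled pairs,'' all proved by pigeonhole over the $k$ groups) is vacuous for a single doubled pair. Your appeal to ``skeleton rigidity'' does not engage with this: the skeleton has only $k-2$ or $k-1$ vertices, so after an extension the gadget vertices must be redistributed among almost all $k$ groups, and ruling out every such redistribution is exactly the hard part. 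The correct repair is the paper's Proposition~\ref{prop:unifying}: do not claim $A_S$ is maximal; instead map each $A_S$ to an arbitrary maximal blow-up containing it, and use the ``at most $O(k^2)$ doubled pairs in \emph{any} blow-up'' claim to bound each fiber of this map by $2^{O(k^2)}$, which is also where the $2^{-3k^2-2k}$ loss in the stated bound actually comes from (you attribute it to skeleton vertices and degenerate $S$, which accounts only for the $2^K-2$ factor). With that substitution your constructions — which coincide with the paper's case-by-case gadgets — go through.
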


One can check that the cases above cover all the possible triples of $H, (U_+, U_-)$.  

\

Our last results, shown in section~\ref{sec: bounded} and section~\ref{sec:unbounded}, extends the base graph $H$ to a family of graphs. For example, we may be interested in the number of maximal blow-ups of cycles in a graph $G$, where we do not specify the length of the cycle.

\begin{definition}\label{def: polyexpinf}
Let $\mathcal{H} = \{(H_1,U_1), (H_2,U_2), \dots\}$ be a family of graphs, where $H_i$ is a simple graph and $U_i \subseteq V(H_i)$ is the set of vertices in $H_i$ that are prescribed to be cliques. We call each $H_i\in \mathcal{H}$ a \textbf{pattern}. 

We say $\mathcal{H}$ is a \textbf{$c$-closed polynomial pattern family}, if there is a polynomial $f(x)\in \mathbb R[x]$ such that for any arbitrary $c$-closed graph $G$ on $n$ vertices,  and for each $(H_i, U_i) \in \mathcal{H}$, $$\# \{S\subset V(G) \colon  G[S] \text{ is a maximal }  U_i\text{-clique prescribed blow-up of $H_i$}\} \le f(n).$$ 
 In other words, roughly speaking, the number of sets $S$ which correspond to a maximal $U_i$-clique prescribed blow-up of some pattern in $\mathcal{H}$ is small (polynomial). 

We say $\mathcal{H}$ is a \textbf{$c$-closed exponential pattern family}, if there is a constant $\epsilon>0$ and a sequence of $c$-closed graphs $(G_n)_{n\ge 1}$  such that $v(G_n) \to \infty$, and for each integer $n \geq 1$, there exists an $i$ such that  $$\# \{S\subset V(G_n) \colon  G_n[S] \text{ is a maximal }  U_i\text{-clique prescribed blow-up of $H_i$}\} \ge e^{v(G_n)^\epsilon}.$$  
In other words, the number of sets $S$ which correspond to a maximal $U_i$-clique prescribed blow-up of some pattern in $\mathcal{H}$ is large (exponential). 
\end{definition}

Note that this definition can be easily altered for induced blow-ups.
When we do not specify uninduced or induced, we usually mean uninduced. 

\begin{definition}\label{def: familyinduced}
Let $\mathcal{H} = \{(H_1,U_{1,+},U_{1,-}), (H_2,U_{2,+},U_{2,-}), \dots\}$ be a family of graphs, where $H_i$ is a simple graph and $U_{i,+}, U_{i,-} \subseteq V(H_i)$ are two disjoint subsets of vertices; vertices in $U_{i,+}$ are prescribed to be cliques while vertices in $U_{i,-}$ are prescribed to be independent sets. We call each $H_i\in \mathcal{H}$ a \textbf{pattern}. 

We say $\mathcal{H}$ is a \textbf{$c$-closed induced polynomial pattern family}, if there is a polynomial $f(x)\in \mathbb R[x]$ such that for any arbitrary $c$-closed graph $G$ on $n$ vertices,  and for each $(H_i, U_{i,+}, U_{i,-}) \in \mathcal{H}$, $$\# \{S\subset V(G) \colon  G[S] \text{ is a maximal }  (U_{i,+},U_{i,-})\text{-prescribed induced blow-up of $H_i$}\} \le f(n).$$

We say $\mathcal{H}$ is a \textbf{$c$-closed induced exponential pattern family}, if there is a constant $\epsilon>0$ and a sequence of $c$-closed graphs $(G_n)_{n\ge 1}$  such that $v(G_n) \to \infty$, and for each integer $n \geq 1$, there exists an $i$ such that  $$\# \{S\subset V(G_n) \colon  G_n[S] \text{ is a maximal }  (U_{i,+},U_{i,-})\text{-prescribed induced blow-up of $H_i$}\} \ge e^{v(G_n)^\epsilon}.$$  

\end{definition}

The next theorem is a corollary of Theorem \ref{thm:noninduced}.

\begin{theorem}\label{thm: finiteImplyPolynomial}
    If $\mathcal{H}$ is finite, then $\mathcal{H}$ is a $c$-closed polynomial pattern family. 
\end{theorem}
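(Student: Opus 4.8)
The plan is to reduce directly to Theorem~\ref{thm:noninduced}, treating the single‑vertex patterns separately via Theorem~\ref{theorem:maxcliques}, and then to exploit finiteness to merge the resulting per‑pattern bounds into one polynomial. Write $\mathcal{H} = \{(H_1,U_1),\dots,(H_m,U_m)\}$ with $m$ finite, and set $k_i := v(H_i)$. Fix an arbitrary $c$-closed graph $G$ on $n$ vertices and an index $i$.

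First I would dispose of the case $k_i \ge 2$, where Theorem~\ref{thm:noninduced} applies verbatim: the number of sets $S \subseteq V(G)$ for which $G[S]$ is a maximal $U_i$-clique prescribed blow-up of $H_i$ is at most
\[
g_i(n) := n^{(\max(c-1,1))k_i}\cdot \min\bigl\{3^{(c-1)k_i/3}n^{2k_i},\; 4^{(c+4)(c-1)k_i/2}n^{(2-2^{1-c})k_i}\bigr\},
\]
which is a polynomial in $n$ of degree at most $(\max(c-1,1)+2)k_i$ whose coefficients depend only on $c$ and $k_i$, hence only on $c$ and $\mathcal{H}$.

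Next I would handle $k_i = 1$, the one case not covered by Theorem~\ref{thm:noninduced} and the only place a separate argument is needed. If $U_i = \emptyset$, then $G[S]$ is a blow-up of the single vertex $H_i$ precisely when $S \neq \emptyset$ (there is no edge or clique constraint), so the unique maximal such set is $S = V(G)$, and the count is $1 =: g_i(n)$. If $U_i$ is the single vertex of $H_i$, then $G[S]$ is a $U_i$-clique prescribed blow-up of $H_i$ exactly when $S$ is a nonempty clique of $G$; the maximal such $S$ are precisely the maximal cliques of $G$, so Theorem~\ref{theorem:maxcliques} bounds their number by $g_i(n) := \min\{3^{(c-1)/3}n^2,\, 4^{(c+4)(c-1)/2}n^{2-2^{1-c}}\}$, again a polynomial in $n$ with coefficients depending only on $c$.

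Finally I would combine the pieces: set $f(x) := \sum_{i=1}^{m} g_i(x) \in \mathbb{R}[x]$, which is a polynomial since it is a finite sum of polynomials (and take $f \equiv 0$ if $\mathcal{H} = \emptyset$). For every $c$-closed graph $G$ on $n$ vertices and every $i$, the number of maximal $U_i$-clique prescribed blow-ups of $H_i$ in $G$ is at most $g_i(n) \le f(n)$, which is exactly the defining condition for a $c$-closed polynomial pattern family. There is no real obstacle beyond this bookkeeping; the single point worth flagging is that finiteness of $\mathcal{H}$ is used essentially to pass from the per‑pattern bounds $g_i$ to one uniform polynomial $f$—the degrees $(\max(c-1,1)+2)k_i$ stay bounded only because there are finitely many patterns, which is precisely the phenomenon that can fail for infinite families and motivates Sections~\ref{sec: bounded} and~\ref{sec:unbounded}.
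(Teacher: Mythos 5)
Your proposal is correct and matches the paper, which states this result as an immediate corollary of Theorem~\ref{thm:noninduced} with no further argument. Your explicit handling of the $k_i=1$ patterns (which Theorem~\ref{thm:noninduced} excludes since it assumes $k\ge 2$) and the summation over the finitely many patterns is exactly the bookkeeping the paper leaves implicit.
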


When counting induced patterns, i.e., $\mathcal{H} = \{ (H_1, U_{1,+}, U_{1, -}), \dots\}$ and when $\mathcal{H}$ is finite, we can also obtain a characterization theorem of when the family is polynomial versus when it is exponential. The exact statement is in Corollary \ref{cor: finiteFamilyInduced}.

When $\mathcal{H}$ is infinite, i.e., the patterns in $\mathcal{H}$ has size grow to infinity, then we have the following general results.

\begin{theorem}\label{thm:infinitebd}
    For any  $d > 1$, there exists $c > 0$ such that if each pattern $H$ in $\mathcal{H}$ has maximum degree bounded above by $d$, and $\mathcal{H}$ is an infinite family, then $\mathcal{H}$ is a $c$-closed exponential pattern family. 
\end{theorem}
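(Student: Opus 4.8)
The plan is to show that every pattern $H\in\mathcal{H}$ of sufficiently large order $k:=v(H)$ admits a host $G$, depending only on $H$, which is $c$-closed for some $c=c(d)$ depending only on $d$, has $v(G)=\Theta_d(k)$, and in which $H$ has at least $5^{\Omega_d(k)}$ maximal $U$-prescribed blow-ups. Since there are only finitely many graphs of each order, an infinite $\mathcal{H}$ contains patterns of arbitrarily large order, so letting $G_n$ be the host built for the $n$-th such pattern yields $c$-closed graphs with $v(G_n)\to\infty$, each exhibiting $e^{\Omega_d(v(G_n))}$ maximal blow-ups of a pattern in $\mathcal{H}$ — exactly Definition~\ref{def: polyexpinf} with $\epsilon$ any value in $(0,1)$ (or merely some positive $\epsilon$, if the pinning variant below is needed). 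Beforehand one reduces to the case that $H$ is connected with no isolated vertex: a pattern with an isolated vertex has at most one maximal blow-up in any host (the unconstrained group of such a vertex always absorbs one more vertex), so such patterns may be discarded, and an isolate-free disconnected pattern is handled by the same method applied to its components, or, as in the matching case $\mathcal{H}=\{M_m\}$, by a disjoint-gadget argument.

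The structural input is that every connected $H$ with $\Delta(H)\le d$ contains an induced matching $e_1=a_1b_1,\dots,e_m=a_mb_m$ with $m\ge (k-1)/\bigl(2d(d+1)\bigr)$: greedily pick an edge, add it to the matching, and delete the closed neighbourhoods of its two endpoints; this keeps the matching induced, each step deletes at most $2(d+1)$ vertices hence at most $2d(d+1)$ edges, and $e(H)\ge k-1$. Put $W=V(H)\setminus\{a_i,b_i: i\in[m]\}$ and build $G$ from $H$ by replacing each pair $\{a_i,b_i\}$ with a five-vertex set $\Gamma_i$ spanning a $5$-cycle, keeping $G[W]=H[W]$, and joining each $w\in W$ completely to $\Gamma_i$ whenever $wa_i\in E(H)$ or $wb_i\in E(H)$ (there are no edges between distinct $\Gamma_i,\Gamma_j$, as the matching is induced). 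A short case check over non-adjacent pairs — inside a $\Gamma_i$, across two gadgets, inside $W$, between $W$ and a gadget — uses $\Delta(H)\le d$ to bound every common-neighbourhood by $O(d)$, so $G$ is $c(d)$-closed. For each choice $\sigma=(T_1,\dots,T_m)$, where $T_i$ is one of the five sets of three consecutive vertices of $\Gamma_i$ (replaced by one of its five edges when $a_i,b_i$ both lie in $U$), let $S_\sigma=W\cup\bigcup_i T_i$; one checks $G[S_\sigma]$ is a $U$-prescribed blow-up of $H$ via $V_w=\{w\}$ and $V_{a_i}\sqcup V_{b_i}=T_i$, split into the midpoint and the two ends of $T_i$ (or into singletons, in the edge case) so that every prescribed endpoint receives a clique, using that the midpoint of such a $P_3$ inside $C_5$ is joined to both ends and that every $w$ with $wa_i$ or $wb_i$ in $E(H)$ is joined to all of $\Gamma_i$. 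The $5^m$ choices of $\sigma$ give $5^m$ distinct sets, and $v(G)=|W|+5m=k+3m=\Theta_d(k)$, so this already produces $5^{\Omega_d(k)}=e^{\Omega_d(v(G))}$ blow-ups of $H$.

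The remaining — and the only genuinely delicate — step is that each $S_\sigma$ is maximal. Here $V(G)\setminus S_\sigma$ consists of exactly the two vertices of each $\Gamma_i$ outside $T_i$, and adding any such $g$ makes the $\Gamma_i$-vertices of $S_\sigma\cup\{g\}$ induce a $P_4$. Tracking which pattern-vertices' groups may meet $\Gamma_i$ — only those whose entire $H$-neighbourhood maps into $\Gamma_i$ together with the at most two vertices of $W$ attached to $\Gamma_i$ — one shows $g$ cannot lie in the group of any $W$-vertex nor of any $a_j,b_j$ with $j\ne i$, hence must lie in $V_{a_i}$ or $V_{b_i}$; but then $V_{a_i}\sqcup V_{b_i}$ would be a $P_4$, which is not a biclique, a contradiction. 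The one point where this reasoning can fail, and the part I expect to be the main obstacle, is a vertex $w\in W$ with $w\notin N_H(a_i)\cup N_H(b_i)$ but $N_H(w)\subseteq N_H(a_i)\cup N_H(b_i)$, which a priori could absorb $g$ into $V_w$; this should be removable by choosing the induced matching maximal (which forces each $W$-vertex close to $V(M)$ and sharply restricts such $w$) together with a local case analysis, or, failing that, by attaching to each $\Gamma_i$ a small private gadget that pins $V_{a_i},V_{b_i}$ inside $\Gamma_i$ — affordable because $v(G)$ may be taken polynomial rather than linear in $k$, at the cost of a smaller but still positive $\epsilon$ — or by arguing directly that a $P_4$-obstruction in $\Gamma_i$ survives any such absorption. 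With maximality established, stringing the hosts together over the large patterns of $\mathcal{H}$ finishes the proof.
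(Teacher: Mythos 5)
Your overall strategy -- build a $c(d)$-closed host of size $O_d(k)$ by planting a gadget on each edge of a large induced matching of $H$, and read off exponentially many blow-ups from the independent gadget choices -- is in the right spirit, and the induced-matching extraction and the $c$-closedness check are fine. But the step you yourself flag as delicate, maximality of the sets $S_\sigma$, is a genuine gap, and none of the three patches you sketch closes it. Concretely, take $H$ to be a triangle $a_1b_1u$ with a pendant vertex $w$ attached to $u$. The greedy may select the induced matching $M=\{a_1b_1\}$ (which is even a \emph{maximal} induced matching, since $u\sim a_1$ blocks $wu$), so $W=\{u,w\}$, $u$ is joined to all of $\Gamma_1=v_1v_2v_3v_4v_5$, and $w$ is joined only to $u$. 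For $T_1=\{v_1,v_2,v_3\}$ with $V_{a_1}=\{v_2\}$, $V_{b_1}=\{v_1,v_3\}$, $V_u=\{u\}$, $V_w=\{w\}$, the set $S_\sigma\cup\{v_4\}$ is again a blow-up: simply put $v_4$ into $V_w$, which only needs to be complete to $V_u=\{u\}$. In fact the whole of $V(G)$ is a blow-up here (e.g.\ $V_w=\{w,v_4,v_5\}$), so all five candidate sets for this gadget lie inside a single maximal blow-up. This is exactly the absorption scenario you identify ($N_H(w)\subseteq N_H(a_1)\cup N_H(b_1)$ with $w$ itself non-adjacent to $a_1,b_1$), and it defeats all three proposed fixes: the matching is already maximal, and there is no surviving $P_4$-obstruction because $v_4$ does not land in $V_{a_1}\sqcup V_{b_1}$ at all. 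A further logical issue is that even where no single vertex can be added, this does not establish maximality: as the paper's discussion around Figure~\ref{fig:hardmaximal} stresses, $S\subsetneq S'$ with both blow-ups does not force intermediate sets to be blow-ups, so ruling out $S_\sigma\cup\{g\}$ for each $g$ separately is not enough. (A smaller remark: discarding all patterns with unprescribed isolated vertices could discard all but finitely many members of $\mathcal{H}$, so that reduction also needs care.)

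This is where the paper's proof diverges in an essential way: it never proves that any particular constructed set is maximal. Instead it (i) anchors the gadgets at twin groups of vertices of the \emph{minimum available degree} $i^*$, chosen pairwise far apart and at distance at least $5$ from every vertex of smaller degree -- precisely the safeguards that rule out an absorbing vertex like $w$ above -- and (ii) replaces the direct maximality check by a counting argument: it produces $C_d^{T/3}$ ``naive'' blow-ups, proves via degree/virtual-degree bookkeeping (Claims~\ref{claim:degreedelta}--\ref{claim:smallnum} and their Type~I analogues) that any single blow-up of $H$ in $G$ can contain only $2^{o(T)}$ of them, and concludes that the map sending each naive blow-up to some maximal blow-up containing it has small fibers and hence an exponentially large image. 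If you want to salvage your $C_5$ construction, you would need both ingredients: a structural choice of where to plant the gadgets that provably excludes absorbing groups, and a fiber-counting argument in place of ``each $S_\sigma$ is maximal.'' As it stands, the proposal does not prove the theorem.
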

To prove this Theorem, we will prove in 
 Section~\ref{sec: bounded} the following theorem. 
\begin{theorem}\label{thm: bounded}
    Let $d\ge 2$ be a positive integer. Let $H$ be a connected graph with maximum degree $d$ and $N$ vertices, where $N > d^{d^{d^d}}$. Then there exists a graph $G$ such that $N \leq |V(G)| \leq 2^{d^{21d+20}}N$, the maximum degree is at most $d2^{d^{21d+20}}$, and there are at least $2^{|V(G)|/2^{d^{21d+20}}}$ vertex sets in $G$ corresponding to maximal blow-ups of $H$.   
\end{theorem}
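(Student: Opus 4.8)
\noindent\emph{Overall plan and Step 1 (locating gadget sites).} The plan is to build $G$ around a single ``core'' copy $H_0$ of $H$ and to attach, at a set $W=\{w_1,\dots,w_m\}$ of pairwise far-apart vertices of $H_0$, identical small ``switching'' gadgets, each contributing an independent binary choice to any blow-up of $H$ in $G$, so that $G$ ends up with at least $2^m$ distinct maximal blow-ups of $H$. To produce $W$, I would first use that $H$ is connected with maximum degree $d$ while $N$ is enormous --- this is the only place the hypothesis $N>d^{d^{d^d}}$ is used, and it is far more than needed. There are at most $2^{d^{O(R)}}$ isomorphism types of rooted radius-$R$ balls in a max-degree-$d$ graph, so for a suitable $R=R(d)=O(d)$ one type is shared by at least $N/2^{d^{O(R)}}$ vertices of $H$, and a greedy packing keeps a $d^{-O(R)}$ fraction of them pairwise at distance $>2R$; this gives $W$ with $m\ge N/2^{d^{O(d)}}$ whose members have pairwise isomorphic radius-$R$ balls. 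Choosing $R$ large enough (still $O(d)$) we can moreover ensure that within each $N_{\le R}(w_i)$ there is a vertex that is twin-free in $H$ and whose $H$-neighborhood is contained in that of no other vertex of $H$ --- a local rigidity we will exploit later. (If $H$ is highly symmetric, e.g.\ every vertex lies in a large twin class, we instead attach the switch to a twin class; this variant runs in parallel.) Since the sites are far apart and locally identical, the attached gadgets will be pairwise non-interacting and may all be the same gadget $\Gamma$.

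\noindent\emph{Step 2 (the switch, and the count).} At each $w_i$ I would attach $\Gamma$ (of size $|\Gamma|\le 2^{d^{O(d)}}$, determined by the common local type of the $w_i$), designed so that in \emph{every} blow-up of $H$ inside the resulting graph $G$: (i) the core $H_0$ is forced onto its canonical near-singleton partition; and (ii) the part of $\Gamma_i$ lying in the blow-up is one of exactly two configurations, each of which is unextendable. The prototype is a ``clone'': adjoin a new vertex $v_i'$ adjacent to precisely the images of $N_H(w_i)$ but not to $v_{w_i}$, pinning the group of the corresponding vertex of $H$ to be either $\{v_{w_i}\}$ or $\{v_i'\}$. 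One checks this makes both states maximal when that vertex carries a clique prescription; when it is unprescribed, $\Gamma$ is enlarged (an incomplete clone missing one image of $N_H(w_i)$, together with a few pendant auxiliary vertices) to force an unextendable binary choice regardless. By far-apartness the $m$ choices are mutually independent and yield pairwise distinct vertex sets, so $G$ has at least $2^m$ maximal blow-ups of $H$. For the parameters: $|V(G)|=N+m|\Gamma|$, so $N\le|V(G)|\le 2^{d^{O(d)}}N$; each core vertex has degree at most $d+|\Gamma|$ and each gadget vertex at most $|\Gamma|+d$, so $\Delta(G)\le d\cdot 2^{d^{O(d)}}$; and $2^m\ge 2^{N/2^{d^{O(d)}}}\ge 2^{|V(G)|/2^{d^{O(d)}}}$. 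It remains only to choose the constants hidden in the exponents to fit under $21d+20$.

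\noindent\emph{Main obstacle.} I expect the crux to be parts (i)--(ii) of Step 2: showing that \emph{every} maximal blow-up of $H$ in $G$ really does restrict to the canonical partition on $H_0$ and to the local ``one of two'' pattern on each $\Gamma_i$, i.e.\ ruling out globally shifted or re-shuffled $H$-partitions --- precisely the difficulty emphasized in Subsection~\ref{subsec:diff}. Since a non-induced blow-up tolerates arbitrary extra edges and imposes no internal structure on unprescribed parts, the only obstructions to extending or re-partitioning a blow-up are (a) clique prescriptions and (b) the impossibility of placing a vertex $x$ into a group whose $H$-neighborhood is not contained in $N_G(x)$. The bulk of the proof will be a careful case analysis showing that, thanks to the local rigidity built in Step 1 --- and, where needed, to a gadget of size $2^{d^{O(d)}}$ chosen precisely to destroy the symmetry of $H$ --- these two mechanisms already pin the structure down. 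The secondary difficulties are the unprescribed case and the highly-symmetric case, which need the enlarged gadget and a parallel but analogous verification; these, together with the rigidity analysis, are what inflate the size and degree bounds to the stated form $2^{d^{21d+20}}$.
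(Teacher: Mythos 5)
There is a genuine gap, and it sits exactly where you place your ``main obstacle'': you never establish that each of your $2^m$ configurations is a \emph{maximal} blow-up, and your prototype gadget in fact fails to make them maximal. Theorem~\ref{thm: bounded} concerns unprescribed blow-ups, so a group of the blow-up carries no internal structure whatsoever; if you adjoin a clone $v_i'$ adjacent to exactly the images of $N_H(w_i)$, then $\{v_{w_i}, v_i'\}$ is itself a perfectly legal group for the vertex $w_i$ (both members are adjacent to everything the group must be complete to), so the blow-up containing \emph{both} clones strictly extends each single-clone state and neither state is maximal. Your two switch positions therefore collapse into one, and the count $2^m$ evaporates. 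You acknowledge this and defer to an ``enlarged gadget'' plus ``a careful case analysis'' proving that every blow-up restricts to the canonical partition on $H_0$ and a binary pattern on each gadget, but that global rigidity claim is precisely the hard content of the theorem (cf.\ the regrouping phenomenon of Subsection~\ref{subsec:diff}) and is not supplied.

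The paper's proof deliberately avoids proving any such rigidity or maximality. It duplicates a carefully chosen \emph{adjacent pair} of twin groups $[v],[u]$ (selected via a degree-stratification argument around the smallest well-populated degree $i^*$, with case distinctions Type I(a), I(b), II depending on the local degree pattern), producing $C_d^{T/3}$ ``naive'' blow-ups that are merely induced copies of $H$, with no claim of maximality. It then defines an arbitrary map $f$ sending each naive blow-up to some maximal blow-up containing it, and bounds the fibers of $f$: the key lemmas (Claims~\ref{claim:degreedelta}--\ref{claim:smallnum} and \ref{claim:topdeg}) show that \emph{any} blow-up of $H$ in $G$ can contain only a bounded number of twin groups / vertices of virtual degree $i^*$ --- a bound independent of the number of copies $C_d$ --- so each maximal blow-up absorbs few naive ones and the image of $f$ is still exponentially large. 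This counting-with-multiplicity device is the idea your proposal is missing; without it, or without an actual proof that your gadgets force unextendable binary states, the argument does not go through.
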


Note that the parameter $N$ is the pattern size, which grows arbitrarily large in infinite families. The host graph size is $\asymp_d$ $N$, so $2^N$ represents genuine exponential growth as pattern size increases. Also, the construction in the proof of theorem \ref{thm: bounded} applies to maximal $(H,U)$-blow-ups and maximal $(H,U_+,U_-)$-induced blow-ups. For the detailed statements, see theorems \ref{thm: boundedprescribed} and \ref{thm:boundedinduced}.

When patterns in $\mathcal{H}$ do not have bounded maximum degree,  the situation is more complicated. We will give examples of polynomial pattern families and exponential pattern families in section~\ref{sec:unbounded}.

\subsection{Difficulties}\label{subsec:diff}
The primary challenge arises from the difficulty of determining whether a given vertex set that forms a blow-up of a graph $H$ is maximal.
Unlike the case of enumerating maximal cliques, where verifying maximality is straightforward by checking that no other vertex is adjacent to all current vertices, the situation is more complex for blow-ups of a graph $H$ with at least two vertices. When a vertex is added to a set that induces a blow-up of $H$, the vertices may regroup in such a way that the resulting vertex partition still forms a larger blow-up. This potential for re-grouping is the central challenge in verifying maximality.

For example, consider the graphs $H$ and $G$ shown in Figure~\ref{fig:max-def}. One possible (but not maximal) blow-up of $H$ in $G$ maps $u_1$ to the group corresponding to $v_1$, $u_2$ to $v_2$, $u_3$ to $v_3$, and $u_4$ to $v_4$. In this configuration, $u_5$ cannot be added to any of the existing groups without violating the blow-up structure. However, $u_5$ can still be added to the set $\{u_1, u_2, u_3, u_4\}$ to form a larger blow-up of $H$, provided the {\it vertex partition is redefined}. In this larger blow-up, for example, $u_1$ corresponds to $v_1$, $u_2$ and $u_3$ are grouped together and correspond to $v_2$, $u_4$ corresponds to $v_3$, and $u_5$ corresponds to $v_4$.

\begin{figure}[H]
    \centering
    \includegraphics[scale=0.6]{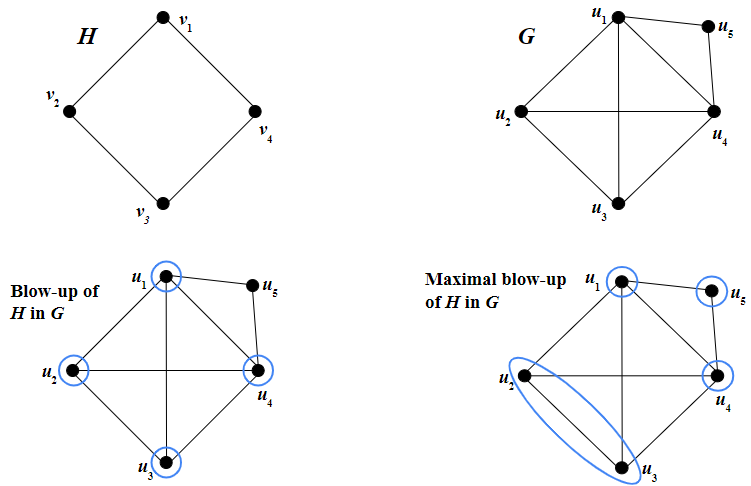}
    \caption{Example of extending a blow-up to maximality when searching for a blow-up of $H$ in $G$:
The blue circles represent the vertex groups in the blow-up that correspond to the vertices of 
$H$. The left diagram shows a proposed (non-maximal) blow-up, while the right diagram shows a maximal blow-up. Notably, reaching the maximal blow-up on the right requires regrouping the vertices from the smaller blow-up on the left; that is, the grouping in the maximal blow-up is not simply a superset of the original—it may involve reassigning vertices among groups.}
    \label{fig:max-def}
\end{figure}

This example shows that compared to finding maximal cliques, identifying maximal (prescribed) blow-ups of a general graph $H$ is more challenging. This added difficulty arises from the possibility of regrouping existing vertices and also potentially assigning newly added vertices to be its own group. 

Moreover, even if $V(G')$ is the vertex set of a maximal blow-up of a fixed graph $H$ in the host graph $G$, and assuming $G'$ contains a smaller induced subgraph $G''$ that is also a blow-up of $H$, it does not necessarily follow that adding any vertex from $V(G')\setminus V(G'')$ to $V(G'')$ will result in another blow-up of $H$. This behavior stands in stark contrast to cliques, where any subset of a clique is again a clique. Recall that cliques can be viewed as clique-prescribed blow-ups of a single vertex. Therefore maximality for cliques is much easier to determine and extensions are more straightforward.

An example illustrating this challenge is shown in Figure \ref{fig:hardmaximal}. The goal is to look for blow-ups of a 4-cycle $H$ within the given graph $G$. Let the vertices of $H$ be $v_1,v_2,v_3,$ and $v_4$, and let $u_1,u_2,u_3,u_4 \in V(G)$ be a blow-up of $H$ with $u_i$ being the only vertex in the group corresponding to $v_i$. Notice that $u_5$ and $u_6$ can each be easily added to join any of $u_1,u_2,u_3,$ or $u_4$ in the groups representing $v_1,v_2,v_3,$ and $v_4$ respectively. However, $u_7$ cannot be added to join an already existing group. In fact,  there is no maximal blow-up consisting of all the vertices of $G$ until $u_5$, $u_6$, and $u_7$ are each put in their own group and $u_1,u_2,u_3$, $u_4$ are grouped together to represent one vertex in $H$. This paper will address the case for general $H$ and overcome these difficulties. 
\begin{figure}[H]
    \centering
    \includegraphics[scale=0.6]{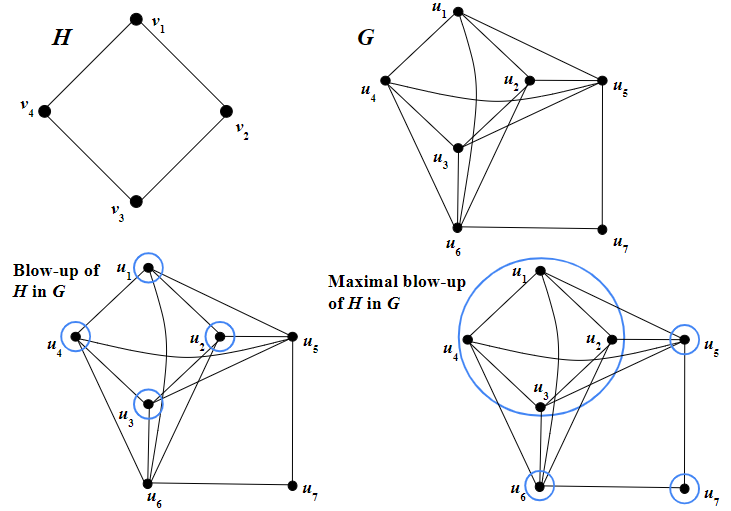}
    \caption{This example illustrates that for a general graph $H$, verifying maximal blow-ups of $H$ within a host graph $G$ can be challenging. The images below show two blow-ups: the left depicts a proposed (but not maximal) blow-up, while the right shows a maximal blow-up that includes all relevant vertices. Notably, a vertex set that is both contained in a blow-up and contains another blow-up is not necessarily itself a blow-up. For instance, $G[\{u_1, u_2, u_3, u_4, u_7\}]$ is not a blow-up of $H$, even though it is contained in the larger blow-up $G[\{u_1, \dots, u_7\}]$, and contains the smaller blow-up $G[\{u_1, \dots, u_4\}]$.}
    \label{fig:hardmaximal}
\end{figure}

\section{Enumerating maximal non-induced blow-ups}\label{sec: noninduced finite}

In this section, we show that in a $c$-closed graph on $n$ vertices, the maximal prescribed blow-ups of a fixed size graph is always at most polynomial in $n$. Below is the same statement as in Theorem \ref{thm:noninduced}. 
\begin{theorem}\label{thm: noninduced finite}
Let $H$ be a graph on $k \geq 2$ vertices and $U$ a subset of $V(H)$. 
For $c > 0$, the number of sets which form maximal $U$-prescribed blow-ups of $H$  in any $c$-closed graph on $n$ vertices is bounded above by $\left( n^{\max\{c-1,1\}}+\min \{3^{(c-1)/3}n^2, 4^{(c+4)(c-1)/2}n^{2-2^{1-c}}\}\right)^k$
\end{theorem}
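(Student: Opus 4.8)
The plan is to reduce the problem of counting maximal $(H,U)$-blow-ups to counting maximal cliques, by showing that every maximal $(H,U)$-blow-up can be encoded by a bounded amount of data: a choice of $k$ ``anchor'' vertices together with a maximal clique in an auxiliary graph. First I would fix a maximal $U$-prescribed blow-up $G[S]$ with an associated vertex partition $S = V_1 \sqcup \dots \sqcup V_k$. For each group $V_i$, I want to identify a small set of witnesses $W_i \subseteq V_i$ such that the common neighborhood behaviour of $V_i$ is already determined by $W_i$. Concretely, if $V_i$ is a prescribed clique ($i \in U$), then $V_i$ is a clique and I can take $W_i$ to be a single vertex $w_i \in V_i$; if $V_i$ is not prescribed, pick $w_i \in V_i$ arbitrarily. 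The point is that once the $k$ anchors $w_1, \dots, w_k$ are fixed, the rest of $V_i$ must lie inside a well-defined ``eligible set'' $E_i$: the set of vertices of $G$ that are adjacent to all $w_j$ with $(i,j) \in E(H)$ (and, if $i \in U$, also adjacent to $w_i$). Since each of the at most $k$ neighborhood constraints is fixed by the anchors, $E_i$ is determined, and $V_i \subseteq E_i$.

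Next I would analyze what maximality forces. Given the anchors, consider the induced subgraph $G[E_i]$ for a non-prescribed index $i$: the group $V_i$ can be any subset of $E_i$ that (together with the fixed structure on the other groups) keeps $G[S]$ a valid blow-up — but maximality should force $V_i$ to be as large as possible, hence essentially the full eligible set, OR force it to be a maximal clique inside $G[E_i]$ when additional clique constraints percolate in. The subtlety flagged in Subsection~\ref{subsec:diff} — that adding a vertex may require re-partitioning — is exactly why we must be careful: I would argue that regardless of how re-partitioning could occur, the \emph{vertex set} $S$ itself is pinned down by the anchors up to a maximal-clique choice. For prescribed indices $i \in U$, the group $V_i$ is a clique contained in $E_i$, and maximality (within the fixed partition, or after accounting for re-grouping) forces $V_i$ to be a \emph{maximal} clique of $G[E_i']$ for an appropriate further-restricted eligible set $E_i'$ (restricting to the common neighborhood of the already-chosen vertices). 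By Theorem~\ref{theorem:maxcliques}, $G[E_i']$ — being an induced subgraph of a $c$-closed graph, hence itself $c$-closed — has at most $\min\{3^{(c-1)/3}n^2, 4^{(c+4)(c-1)/2}n^{2-2^{1-c}}\}$ maximal cliques. For a non-prescribed index, I expect the count of possible $V_i$ given the anchors to be bounded by $n^{\max\{c-1,1\}}$: roughly, the eligible set is fixed, and the freedom left is a choice of an ``antichain-like'' or clique-covering object whose count in a $c$-closed graph is polynomial of degree $\max\{c-1,1\}$ (this is where the $c$-closure of the complement-type structure is used, analogous to bounding maximal independent sets / bicliques as in \cite{paper2}).

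The counting then multiplies out: there are at most $n^k$ choices of anchors $(w_1, \dots, w_k)$, and for each anchor tuple, each of the $k$ groups contributes a factor of at most $n^{\max\{c-1,1\}} + \min\{3^{(c-1)/3}n^2, 4^{(c+4)(c-1)/2}n^{2-2^{1-c}}\}$ — the sum accounting for the two regimes (prescribed vs.\ non-prescribed, or the two ways a group's vertex set can be extremal). Since the $n^k$ anchor factor can be absorbed into the per-group polynomial factors (each group already carries at least an $n$), the product telescopes to the claimed bound $\bigl( n^{\max\{c-1,1\}} + \min\{3^{(c-1)/3}n^2, 4^{(c+4)(c-1)/2}n^{2-2^{1-c}}\}\bigr)^k$.

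The main obstacle I anticipate is the re-partitioning phenomenon: I must show that the map from (maximal blow-up vertex set $S$) to (anchor tuple, maximal-clique data) is well-defined and injective \emph{despite} the fact that the partition witnessing $G[S]$ as a blow-up is not unique and does not behave monotonically under vertex addition. Handling this cleanly will require either (a) fixing a canonical partition for each maximal $S$ (e.g.\ lexicographically minimal, or one maximizing some group sizes), and proving the anchors extracted from it determine $S$; or (b) arguing directly that if two distinct maximal blow-up vertex sets produced the same anchors and clique-data, one would be extendable by a vertex from the other, contradicting maximality. Making this robust against all valid re-groupings — and ensuring that the ``eligible set'' truly captures every vertex that the maximality of $S$ forces to be included — is the technical heart of the argument.
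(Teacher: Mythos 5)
Your proposal has the right overall shape (encode each maximal blow-up by a small amount of per-group data, multiply out to get a binomial-expansion-shaped bound), and you correctly identify the re-partitioning issue as the crux — but you leave that crux unresolved, and the specific encoding you propose does not work as stated. The problem is your ``eligible set'' $E_i$: it is defined as the common neighborhood of the \emph{anchors} $w_j$ for $j\sim i$, but the blow-up condition requires $V_i$ to be complete to the entire groups $V_j$, not just to one representative of each. A vertex of $E_i$ outside $S$ may fail to be adjacent to some non-anchor vertex of a neighboring group, so maximality does \emph{not} force $V_i$ to fill out $E_i$ (or a maximal clique therein), and the map from $S$ to your data is neither well-defined nor shown injective. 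Relatedly, your claimed $n^{\max\{c-1,1\}}$ bound for a non-prescribed group given the anchors is asserted via a vague appeal to ``antichain-like'' objects; nothing in the setup delivers it.

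The paper's proof supplies exactly the missing mechanism, and it is worth seeing why each piece is needed. One fixes the partition $A_1\sqcup\dots\sqcup A_k$ maximizing $\sum_i|A_i|^2$ (your option (a), made concrete), and sets $T=\{i:|A_i|\ge c\}$. For $i\in T$, $c$-closedness forces $\bigcup_{j\sim i}A_j$ to be a clique (any two of its vertices have all of $A_i$ as common neighbors), and then $L_2$-maximality forces every neighboring part $A_j$, $j\sim i$, to be a \emph{singleton} — this is the step that makes the common-neighborhood trick legitimate, because the common neighborhood of those singletons really is the set of all vertices addable to $A_i$. Parts with $i\notin T$ have size at most $\max\{c-1,1\}$ and are enumerated directly, giving the $n^{\max\{c-1,1\}}$ factor (not via any antichain count). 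Parts with $i\in T$ are then recovered as $\tilde N(S_i')$ or $C_i\cap\tilde N(S_i')$ for a maximal clique $C_i$, and a two-sided containment shows the data $(T,(A_i)_{i\notin T},(C_i)_{i\in T\cap U})$ determines the set $S$. Without the $L_2$-maximal partition and the resulting singleton-neighbor structure, your anchor-based encoding cannot be repaired, so as written the proposal has a genuine gap.
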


\begin{proof}
Let $G$ be a $c$-closed graph on $n$ vertices. 
Let $S \subseteq V(G)$ be a maximal blow-up of a graph $H$ with $k$ vertices. In other words, there is a way to partition $S$ into disjoint nonempty vertex sets $A_1, A_2, ... , A_k$, where each $A_i$ corresponds to vertex $i$ in $H$. The partition is chosen so that the $L_2$ norm is maximized and we break the tie arbitrarily. We define the $L_2$ norm on the partitions to be $\sqrt{|A_1|^2 + |A_2|^2 + ... + |A_k|^2}$, so maximizing it means that it is not possible to make a partition where the $L_2$ norm on the sizes of the parts is larger than this one. 

First, we fix $T = \{ i \in [k] \colon |A_i| \ge c\}$. We write $j\sim i$ to indicate that vertices $i,j$ are adjacent. Observe for $i\in T$, note $G[\bigcup_{j\sim i} A_j]$ is a clique; for otherwise, there would be nonadjacent vertices with $A_i$ in their common neighborhood and by the definition of $c$-closedness, it would imply $|A_i| < c$. Then for any of these adjacent parts $A_j$ where $j \sim i$, all but one of the vertices in this part can be moved to $A_i$. Unless $|A_j|=1$ for all $j\sim i$, this would increase the $L_2$ norm while still keeping a blow-up of $H$ with prescribed parts being cliques, which leads to a contradiction. So if $|A_i| \geq c$ (i.e., $i \in T$), then all neighboring parts have size 1. Thus, $H[T]$ is an independent set, and now we fix $A_i$ for all $i\notin T$.

For each $i\in T$, let $S_i'=\cup_{j\sim i}A_j$ (note each $A_j$ is fixed as $|A_j|=1$, so this depends only on $i$, not on $A_i$).  Call $\tilde{N}(S_i')$ the common neighbors of $S_i'$ not including $S_i'$, i.e. $\tilde{N}(S_i')=(\cap_{s\in S_i'}N(s))\backslash S_i'$. If $i\not\in U$, we have $A_i\subset \tilde{N}(S_i')\subset A_1\cup A_2\cup\ldots \cup A_k$, for otherwise we can choose $v\in \tilde{N}(S_i')\setminus (A_1\cup A_2\cup\ldots \cup A_k)$ and add $v$ to $A_i$, contradicting maximality of blow-up. If $i\in U$, then $A_i$ has to be a clique. Choose a maximal clique $C_i$ such that $A_i \subseteq C_i$. Then $A_i\subseteq C_i\cap \tilde{N}(S_i')$, and $C_i\cap \tilde{N}(S_i')\subseteq A_1\cup A_2\cup \ldots \cup A_k$, or otherwise we can choose $v\in C_i\cap \tilde{N}(S_i')\setminus (A_1\cup A_2\cup \cdots \cup A_k)$ and add it to $A_i$.

Thus, 

$$  A_1\sqcup \cdots \sqcup A_k \subseteq \bigsqcup_{i\notin T} A_i \sqcup \bigsqcup_{i\in T\setminus U} \tilde{N}(S_i') \sqcup \bigsqcup_{i\in T\cap U}(C_i \cap \tilde{N}(S_i')) \subseteq  A_1\sqcup \cdots \sqcup A_k $$

So equality indeed holds. Thus, for every maximal blow-up $S$, there exists $T$, $(A_i)_{i\notin T}$, and $(C_i)_{i\in T\cap U}$ that uniquely determine $S$.

Given $|T|=m$, the number of choices of $(T, (A_i)_{i\notin T})$ is at most $\begin{pmatrix}k\\m\end{pmatrix} n^{\max\{c-1,1\}(k-m)}$ (note: this includes the possibilities when $|A_i| < c-1$ for some $i$). By Theorem 1, there are at most $\min \{3^{(c-1)/3}n^2, 4^{(c+4)(c-1)/2}n^{2-2^{1-c}}\}$ maximal cliques in $G$. Therefore, given $T,U$ with $|T|=m$, the number of choices for $(C_i)_{i\in T\cap U}$ is at most 
\[\min \{3^{(c-1)/3}n^2, 4^{(c+4)(c-1)/2}n^{2-2^{1-c}}\}^m.\] 
Thus, the number of maximal blow-ups is at most 
\begin{align*} \sum_{m=0}^k \binom km (n^{\max\{c-1,1\}})^{k-m}\min \{3^{(c-1)/3}n^2, 4^{(c+4)(c-1)/2}n^{2-2^{1-c}}\}^m  \\ = \left( n^{\max\{c-1,1\}} + \min \{3^{(c-1)/3}n^2, 4^{(c+4)(c-1)/2}n^{2-2^{1-c}}\}\right)^k.
\end{align*}
\end{proof}

We have the following corollary (Theorem \ref{thm: finiteImplyPolynomial}).
\begin{corollary}
    If $\mathcal{H} = \{(H_1, U_1), (H_2, U_2), \dots, (H_m,U_m)\}$ is finite, then $\mathcal{H}$ is a $c$-closed polynomial pattern family. 
\end{corollary}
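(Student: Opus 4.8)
The plan is to deduce this directly from Theorem \ref{thm: noninduced finite}, applied patternwise, and then take a sum of the resulting bounds. Fix $c > 0$ and write $k_i := v(H_i)$ for $1 \le i \le m$. For every index $i$ with $k_i \ge 2$, Theorem \ref{thm: noninduced finite} already gives, for any $c$-closed graph $G$ on $n$ vertices,
\[
\#\{S \subseteq V(G) : G[S] \text{ is a maximal } U_i\text{-prescribed blow-up of } H_i\} \;\le\; \bigl(n^{\max\{c-1,1\}} + 3^{(c-1)/3}n^2\bigr)^{k_i} =: f_i(n),
\]
where I have bounded the $\min$ in Theorem \ref{thm: noninduced finite} by its first argument so that $f_i$ is a genuine polynomial in $\mathbb{R}[x]$; its degree and coefficients depend only on $c$ and $k_i$, not on $n$ or on $G$.

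Next I would dispose of the patterns with $k_i = 1$, which fall outside the hypothesis $k \ge 2$ of Theorem \ref{thm: noninduced finite}. If $H_i$ is a single vertex $v$ with $v \in U_i$, then a $U_i$-prescribed blow-up of $H_i$ is precisely a clique, so its maximal instances are exactly the maximal cliques of $G$, and Theorem \ref{theorem:maxcliques} bounds their number by $3^{(c-1)/3}n^2 =: f_i(n)$, a fixed polynomial in $n$. If instead $v \notin U_i$, then every nonempty vertex subset of $G$ induces a blow-up of $H_i$, so by Definition \ref{def:maxblow-up} the unique maximal one is $V(G)$ itself; hence the count is at most $1$, and I take $f_i(x) := 1$.

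Finally I would set $f(x) := \sum_{i=1}^m f_i(x)$. Because $\mathcal{H}$ is finite this is a finite sum of elements of $\mathbb{R}[x]$, hence $f \in \mathbb{R}[x]$, and $f$ depends only on $c$ and on $\mathcal{H}$. Then for any $c$-closed graph $G$ on $n$ vertices and any $(H_i, U_i) \in \mathcal{H}$, the number of maximal $U_i$-prescribed blow-ups of $H_i$ in $G$ is at most $f_i(n) \le f(n)$, which is exactly the requirement in Definition \ref{def: polyexpinf} for $\mathcal{H}$ to be a $c$-closed polynomial pattern family. I do not expect any genuine obstacle here: all the substantive work is contained in Theorem \ref{thm: noninduced finite}, and the only point requiring a moment's care is the $k_i = 1$ case, which is either trivial or follows immediately from Theorem \ref{theorem:maxcliques} as handled above.
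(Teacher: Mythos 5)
Your proposal is correct and matches the paper's intent exactly: the paper states this as an immediate corollary of Theorem \ref{thm: noninduced finite} with no further argument, and your patternwise application of that theorem followed by summing the finitely many polynomial bounds is precisely what is meant. Your separate treatment of the $k_i = 1$ patterns (maximal cliques via Theorem \ref{theorem:maxcliques} when the vertex is prescribed, and the unique maximal blow-up $V(G)$ otherwise) is a careful touch the paper leaves implicit, but it does not change the substance of the argument.
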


\section{Enumerating maximal induced blow-ups}\label{sec: induced finite}
In this section, we prove the following theorem which characterizes which graphs $H$ satisfy that in any $c$-closed graphs on $n$ vertices, the number of maximal induced blow-ups of $H$ in $G$ is always a polynomial in $n$, with constants depending on $|V(H)|$ and $c$. Furthermore, we show there is a dichotomy. Fix a graph $H$, 

\begin{itemize}
    \item Either for any $c$, the number of maximal induced blow-ups of $H$ in any $c$-closed graph is bounded above by some polynomial in $|V(G)|$,
    \item Or there exists a $c$ and arbitrarily large $c$-closed graphs where the number of maximal induced blow-ups of $H$ in these graphs are at least exponential in $|V(G)|$. 
\end{itemize} 

We restate the characterization Theorem \ref{thm:inducedchar} below.  In $H$, recall a vertex $i$ is \textbf{bad} if its neighbors form a clique in $H$.

\begin{theorem}
    Let $H$ be a graph on $k$ vertices where $k \geq 1$. Let $U_+, U_- \subseteq V(H)$ be two disjoint subsets. 
    \begin{enumerate}
        \item Suppose in $H$ there are no bad vertices, or exactly one bad twin group $B$ (note $H[B]$ is either a clique or an independent set) and at least one of the following hold:
        
        \begin{enumerate}
            \item If $H[B]$ is a clique, and there is at least one unprescribed vertex in $B$,
            \item If $H[B]$ is a clique, and $B\subseteq U_+$. This includes the case where $|B|=1$ and the unique vertex in $B$ is in $U_+$. 
            \item If $H[B]$ is an independent set, there is exactly one unprescribed vertex in $B$, and every other vertex in $B$ is in $U_-$.
            
        \end{enumerate}
        
        Then, for any $c>0$ and in any $c$-closed graph $G$ on $n$ vertices, the number of vertex sets corresponding to maximal induced $(U_+, U_-)$--prescribed blow-ups of $H$ in $G$ is at most $(2n)^k (n^22^c)^{k}$.
        \item Suppose in $H$ either there are at least two bad twin groups; or there is exactly one bad twin group $B$ and one of the following hold:
        
        \begin{enumerate}
            \item $H[B]$ is a clique, $B\subseteq U_- \sqcup U_+$ and $B\cap U_- \ne \emptyset$.
            \item $H[B]$ is an independent set and $B\subseteq U_- \sqcup U_+$ and $B\cap U_- \ne \emptyset$.

            \item $H[B]$ is an independent set, and there exists at least two vertices in $B$ that are not in $U_-$.
           
        \end{enumerate}
        
        Let $k:=v(H)$. For any $n\ge 1000k^5$, there is a $(k+1)$-closed graph $G$ with $v(G) \in \{n,n+1\}$ such that the number of vertex sets corresponding to maximal $(U_+, U_-)$--prescribed induced blow-ups of $H$ is at least $2^{-3k^2-2k+\frac n2}$. If furthermore $H$ is $c$-closed, then $G$ can be constructed to be $(c+2)$-closed.
    \end{enumerate}
\end{theorem}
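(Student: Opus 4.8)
The two parts are proved independently: part~(1) by an upper-bound argument extending the proof of Theorem~\ref{thm: noninduced finite}, and part~(2) by explicit constructions.

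For part~(1), fix a $c$-closed graph $G$ on $n$ vertices and a vertex set $S$ inducing a maximal $(U_+,U_-)$-prescribed induced blow-up of $H$, and among all valid partitions $S=A_1\sqcup\dots\sqcup A_k$ choose one maximizing $\sum_i|A_i|^2$, breaking ties arbitrarily. Two observations drive the count. First, if the index $i$ is not bad then $|A_i|<c$: a non-bad $i$ has two non-adjacent $H$-neighbours $j,j'$, and since the blow-up is induced, any $u\in A_j$ and $u'\in A_{j'}$ are non-adjacent in $G$ while $A_i$ lies in their common neighbourhood, so $c$-closedness forces $|A_i|<c$. Second (rigidity), for every un-prescribed $i$ the part $A_i$ equals the set of all $v\notin\bigcup_{j\ne i}A_j$ that are complete to $\bigcup_{j\sim i}A_j$ and empty to $\bigcup_{j\not\sim i,\,j\ne i}A_j$, since any other such $v$ could be moved into $A_i$ without destroying the blow-up, contradicting maximality. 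Consequently, writing $B$ for the (at most one) bad twin group, the set $S$ is determined by the prescribed parts $A_i$ with $i\notin B$, the union $\bigcup_{i\in B}A_i$, and a $(2n)^k$-bounded amount of side information (recording $\{i:|A_i|\ge c\}\subseteq B$ and resolving the mild ambiguity in reconstructing the un-prescribed parts). Each part $A_i$ with $i\notin B$ lies inside the common neighbourhood of a non-edge of $G$ --- witnessed by $u,u'$ as above --- which has fewer than $c$ vertices, so recording that non-edge leaves at most $n^2\cdot 2^{c}$ choices for $A_i$; and the hypotheses (1)(a)--(1)(c) are precisely what force $\bigcup_{i\in B}A_i$ to be governed by at most $|B|\le k$ further factors of size $O(n^2 2^c)$ --- each controlled either by a maximal clique of $G$ (of which Theorem~\ref{theorem:maxcliques} gives at most $\min\{3^{(c-1)/3}n^2,\,4^{(c+4)(c-1)/2}n^{2-2^{1-c}}\}\le 2^cn^2$), or by a maximal independent set of $G$ of size $<c$, or else forced to sit inside a common neighbourhood of a non-edge. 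For the maximal-independent-set count we prove the needed analogue of Theorem~\ref{theorem:maxcliques}: a $c$-closed graph on $n$ vertices has at most $O(2^cn^2)$ maximal independent sets of size $<c$, by induction on $c$ (passing to the common neighbourhood of a non-edge drops the closure by two). Taking the product over at most $k$ factors of size $O(n^22^c)$ and multiplying by $(2n)^k$ gives the claimed bound $(2n)^k(n^22^c)^k$.

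For part~(2) we exhibit, for each $H$ on the exponential list, a host graph with $2^{\Omega(n)}$ maximal induced blow-ups. The template: fix a ``core'' induced blow-up $H_0$ of $H$ with every part of size $O(k)$, so $H_0$ occupies only $O(k^2)$ vertices and contributes only $O(k)$ to the closure; then attach $t=\Theta(n)$ pairwise-disjoint ``switch gadgets'', each an edge $\{x_j,y_j\}$ plus a constant number of edges to the core, arranged so that in any extension of $H_0$ to a maximal induced blow-up of $H$ exactly one of $x_j,y_j$ is used. Concretely, the gadget is placed around a vertex $b$ of the bad twin group $B$ so that $x_j,y_j$ are each a legal addition to the part $A_b$ but, since $x_j\sim y_j$, not both, and so that neither can be placed anywhere except in $A_b$; when $b$ may be taken in $U_-$ this is automatic (cases (2)(a),(2)(b)), and when $H[B]$ is an independent set with two non-$U_-$ vertices, or when $H$ has two bad twin groups, one adds a second core anchor to break the symmetry between the candidate roles and obtains the same effect (case (2)(c) and the two-bad-group case). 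The gadgets act independently, so the $2^t$ choices of one endpoint per gadget produce $2^t$ \emph{distinct} sets $S$; one then checks that each induces a blow-up of $H$ and --- the heart of the matter --- that it is maximal, no unused endpoint and no core vertex being addable under \emph{any} repartitioning of $S\cup\{v\}$. Lastly one verifies the closure bound: the only non-adjacent pairs of $G$ with many common neighbours are pairs of distinct-gadget vertices, whose common neighbourhood sits in the bounded fixed neighbourhood of $A_b$, yielding $(k+1)$-closure in general and $(c+2)$-closure when $H$ is $c$-closed; since $t$ can be taken at least $\tfrac n2-O(k^2)$, this gives $2^{-3k^2-2k+n/2}$.

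The principal obstacle is the maximality check in part~(2). As the paper's ``Difficulties'' discussion stresses, for blow-ups of a graph with at least two vertices a set may fail to extend even though each outside vertex looks addable until one repartitions, and may fail to be maximal only through a repartitioning invisible to the one-vertex test; so we must exclude \emph{all} repartitionings of $S\cup\{v\}$ simultaneously. The plan is to run this through the same $L_2$-maximal-partition formalism and rigidity lemma used in part~(1): any larger induced blow-up of $H$ on $S\cup\{v\}$ must, by those facts, align with the rigid partition of $S$, and the gadget wiring then blocks the unique move that could absorb the extra vertex; making this argument uniform across the sub-cases of the exponential side is the bulk of the work. A secondary obstacle, on the polynomial side, is the counting lemma for small maximal independent sets in $c$-closed graphs, for which no off-the-shelf statement exists and which must be established from scratch in the spirit of Theorem~\ref{theorem:maxcliques}.
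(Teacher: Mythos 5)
Your high-level architecture matches the paper's (an $L_2$-maximal partition plus rigidity of un-prescribed parts for the upper bound; switch gadgets fed into a fiber-counting argument for the lower bound), but two of the concrete tools you propose do not work.

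First, in part (1) the bad group $B$ is exactly where the difficulty lives, and your plan for it rests on a lemma that is false and, even if repaired, inapplicable. A $c$-closed graph need not have only $O(2^cn^2)$ maximal independent sets of size $<c$: a disjoint union of $t$ triangles is $1$-closed (hence $c$-closed for every $c$) and has $3^{t}$ maximal independent sets of size $t$, which for $c=t+1$ exceeds any bound of the form $C\,2^cn^2$; the ``pass to the common neighbourhood of a non-edge'' induction has no analogue for independent sets, since a maximal independent set is not contained in any common neighbourhood. More importantly, the parts $A_i$ with $i\in B\cap U_-$ are precisely the ones the first observation does \emph{not} bound: $i$ is bad, so no non-edge among its $H$-neighbours exists, and $|A_i|$ can be as large as you like; nor is $A_i$ a maximal independent set of $G$ (maximality of the blow-up is a global, repartition-sensitive condition). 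The paper avoids counting these parts altogether: in scenario (1)(c) it renormalizes the partition by moving all but one vertex of each $A_{i_j}$ ($i_j\in B\cap U_-$) into the unique un-prescribed part $A_{i_1}$, so that the $U_-$ parts become singletons ($n$ choices each) and $A_{i_1}$ is then pinned down by rigidity; scenario (1)(a) with a $U_-$ vertex in a clique group needs a similar but separate normalization argument. Without some such device your factor ``$O(n^22^c)$ per bad index'' is unsupported.

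Second, in part (2) your gadgets are described as pairwise-disjoint edges $\{x_j,y_j\}$ each wired only to the core. That design provably fails in case (2)(c) (and in the two-bad-group cases): with $B$ independent and $x_1,x_2\in B$ un-prescribed, the set of \emph{all} $2K$ gadget vertices can be absorbed by putting everything except one matched pair into the un-prescribed part $A_{x_1}$ and the remaining pair into $A_{x_2}$ (the two parts are empty to each other since distinct gadgets are non-adjacent), so no one-endpoint-per-gadget selection is maximal and the count collapses. The paper's constructions are not disjoint: the $a_i$'s form a clique (or independent set) among themselves, likewise the $b_i$'s, with a perfect matching between the two sides; this interconnection is what forces any blow-up containing many doubled pairs to collapse them into a single part and run out of vertices for the remaining $k-1$ parts (the ``key claims''). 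Relatedly, the paper explicitly does \emph{not} verify that each selection is itself maximal --- it states it has no direct proof of that --- and instead maps each naive selection to an arbitrary maximal extension and bounds the fibers of that map by $2^{O(k^2)}$ using the key claims (Proposition~\ref{prop:unifying}); your plan to exclude all repartitionings of $S\cup\{v\}$ directly via the rigidity lemma cannot go through as stated, because rigidity only constrains un-prescribed parts and the exponential cases are exactly those where the bad part is not rigid.
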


\
 We first prove the first statement in Theorem \ref{thm:inducedchar}. 
\begin{proof}[Proof of Theorem \ref{thm:inducedchar} Statement 1.]
    We first handle the case where $k=1$, i.e., $H$ is a single vertex. If the single vertex is in $U_-$, then the graph does not satisfy the hypotheses of statement 1. If the single vertex is in $U_+$, then by \cite{paper1}, the number of blow-ups is at most $3^{c/3}n^2 \le 2^cn^3$. If the single vertex is not prescribed, then the only maximal induced blow-up of a single vertex is the whole set, and thus the result trivially holds. 

    From now on we assume $k = |V(H)| \geq 2$, and in this proof, for the sake of conciseness, when we say an $H$-blow-up, we mean an induced $H$-blow-up that is $(U_+,U_-)$-prescribed. 
    For a given $A\subset V(G)$ such that $G[A]$ is a maximal $H$-blow-up, we say a partition $A_1 \sqcup A_2 \sqcup \dots \sqcup A_k$ of $A$ is \textbf{excellent} if
    for any other partition $A'_1 \sqcup A'_2 \sqcup \dots \sqcup A'_k$ that makes it an $H$-blow-up, either $\sum |A'_i|^2 < \sum|A_i|^2$, or $\sum |A'_i|^2 = \sum |A_i|^2$ and there exists $j$ such that $|A_{j'}| = |A'_{j'}|$ for all $j'=1,\cdots,j-1$ and $|A_j|>|A'_j|$. 
   
   For each $i \in [k]$, arbitrarily pick one vertex $v_i$ from each set $A_i$. There are at most $n^k$ choices for these $(v_i)_{i=1}^k$. Furthermore, we let $(c_i)_{i=1}^k$ be variables where $c_i = 1$ forces $A_i$ to be a clique, and $c_i = 0$ forces $A_i$ to be not a clique.

    We say an index $i\in [k]$ is \textbf{challenging with respect to a fixed} $((v_j,c_j))_{j=1}^k$ where $v_j \in V(G)$ and $ c_j \in \{0,1\}$ if the following hold: The number of choices of $A_i$ in an excellent partition, such that 
    \begin{enumerate}\item $v_l \in A_l$ for all $l \in [k]$, and \item $G[A_l]$ is a clique if and only if $c_l = 1$ for all $l\in [k]$,\end{enumerate} is larger than $n^2 2^c$, i.e.

    \[
  \# \left\{ A_i\ \middle\vert \begin{array}{l}
    \exists (A_l)_{l\in [k]\setminus i}\textup{ such that } v_l\in A_l \ \forall \ l \in [k], \\
    \textup{and }(A_l)_{l\in [k]} \textup{ is an excellent partition},  \\
    \textup{and } G[A_l] \textup{ clique } \iff c_l=1 \forall \ l\in [k]\\
    
  \end{array}\right \}> n^22^c
\]

In particular, $|A_l|=1$ forces $c_l = 1$, and we still require $G[A_i]$ to be a clique or not a clique depending on whether $c_i$ is $1$ or $0$. 
Note that the definition of excellent partition is independent from $c_i$ and $v_i$. Hence when we test whether $(A_l)_{l\in [k]}$ is excellent by comparing it with any other partition $A'_1 \sqcup A'_2 \sqcup \dots \sqcup A'_k$ that makes it an $H$-blow-up, we do not require $v_i\in A_i'$ and allow $G[A'_i]$ to be a clique or non-clique regardless of the value of $c_i$.

\begin{claim}\label{claim:1prime}
        Let $G$ be an arbitrary $c$-closed graph with $n$ vertices. Fix $((v_l,c_l))_{l=1}^k$.  Then 
         if $i$ is challenging with respect to $((v_l,c_l))_{l=1}^k$, then $i$ is bad in $H$.

    \end{claim}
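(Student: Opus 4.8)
Proof proposal for Claim~\ref{claim:1prime}.

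The plan is to prove the contrapositive: if $i$ is \emph{not} bad in $H$, then $i$ cannot be challenging with respect to any fixed data $((v_l,c_l))_{l=1}^k$. So assume $N_H(i)$ is not a clique. Then there are two \emph{distinct} vertices $j,j'\in N_H(i)$ with $(j,j')\notin E(H)$. (If no such pair exists for any vertex — e.g.\ when $k\le 2$, or whenever every vertex of $H$ has at most one neighbor — then every index is bad and the claim holds vacuously, so we may assume $j,j'$ as above.) Note that the ``excellent'' condition and the clique constraints $c_l$ will play no role: they only shrink the family of partitions being counted, so it suffices to bound, for the fixed $(v_l)_{l=1}^k$, the number of sets $A_i$ arising as the $i$-th part of \emph{any} induced $(U_+,U_-)$-prescribed $H$-blow-up $(A_l)_{l\in[k]}$ of some $G[A]$ with $v_l\in A_l$ for all $l$.

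The crux is a short argument combining the \emph{induced} blow-up structure with $c$-closedness. Fix such a partition $(A_l)_{l\in[k]}$. Since $(i,j),(i,j')\in E(H)$, the part $A_i$ is complete to both $A_j$ and $A_{j'}$ in $G$; in particular every vertex of $A_i$ is adjacent to $v_j$ and to $v_{j'}$, so $A_i\subseteq N_G(v_j)\cap N_G(v_{j'})$. On the other hand, since $(j,j')\notin E(H)$ and the blow-up is \emph{induced}, $A_j$ is empty to $A_{j'}$ in $G$; as the parts are disjoint and $v_j\in A_j$, $v_{j'}\in A_{j'}$, the vertices $v_j,v_{j'}$ are distinct and non-adjacent in $G$. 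Because $G$ is $c$-closed, $|N_G(v_j)\cap N_G(v_{j'})|\le c-1$, and hence $|A_i|\le c-1$ and, more importantly, every candidate $A_i$ is a subset of the \emph{one fixed} set $N_G(v_j)\cap N_G(v_{j'})$, which has at most $c-1$ elements. Therefore the number of possible sets $A_i$ is at most $2^{c-1}<n^{2}2^{c}$, so $i$ is not challenging with respect to $((v_l,c_l))_{l=1}^k$. This proves the claim.

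I do not expect a serious obstacle here; the one point that must be gotten right is the observation that in an \emph{induced} blow-up the non-edge $(j,j')$ of $H$ forces $v_j\not\sim v_{j'}$ in $G$, which is exactly what lets us invoke $c$-closedness to pin $A_i$ inside a set of size $<c$. This is also where the argument genuinely uses ``induced'': for non-induced blow-ups the pair $v_j,v_{j'}$ could be adjacent, the common-neighborhood bound evaporates, and indeed that case requires the separate, more delicate $L_2$-maximization argument of Theorem~\ref{thm: noninduced finite}.
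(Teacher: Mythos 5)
Your proposal is correct and follows essentially the same route as the paper: the non-edge $(j,j')$ in $N_H(i)$ forces $v_j\not\sim v_{j'}$ in the induced blow-up, so $c$-closedness pins $A_i$ inside the fixed common neighborhood of size less than $c$, giving at most $2^{c-1}$ choices. Your version is in fact slightly sharper in bookkeeping (you fix the single pair $(v_j,v_{j'})$ rather than ranging over non-adjacent pairs inside the parts), but the idea is identical.
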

    \begin{proof}
      Fix any vertex $i$ in $H$. Suppose $i$ is not a bad vertex, i.e.,  the neighborhood of $i \in V(H)$ contains a non-edge, say vertex pair $(j, s)$. Then $A_j$ is empty to $A_s$ in $G$, while $A_i$ is in the common neighborhood of vertices $v_j \in A_j$ and $v_s \in A_s$. Therefore by the definition of $c$-closedness, $|N(v_j) \cap N(v_s)| \le c$. Thus the number of choices for $A_i$ is at most $2^c$ when fixing $v_j, v_s$, and thus $i$ cannot be challenging. 
      \end{proof}

    We return to the proof of Theorem \ref{thm:inducedchar}.  It suffices to show that for any $((v_l,c_l))_{l=1}^k$, there exists at most $(n^22^c)^k$ excellent partitions $(A_1,\cdots ,A_k)$ such that $G[A_1\sqcup \cdots \sqcup A_k]$ is a maximal $H$-blow-up, $v_i\in A_i $ and $G[A_i]$ is a clique if and only if $c_i=1$. Having fixed $((v_l,c_l))_{l=1}^k$, let $C \subseteq [k]$ be the set of $i$ where $i$ is challenging. 
    
    Fix $((v_l,c_l))_{l=1}^k$.
If $|C| = 0$, then the number of choices of $(A_1,A_2,\cdots,A_k)$ that form maximal prescribed induced blow-ups of $H$ is at most $(n^22^c)^k$, since there are at most $n^22^c$ possibilities for each $A_i$. Henceforth assume $|C|\ge 1$.
    
    By claim \ref{claim:1prime}, for all $i\in C$, $i$ is bad. Thus these $i$'s must come from the bad twin group. 

    \noindent\textbf{Case 1: $H[B]$ is a clique.} This corresponds to scenarios a), b) of part 1 of Theorem \ref{thm:inducedchar}.

    We now show another claim.

    \begin{claim}
        Under the hypothesis of Case 1, let $G$ be an arbitrary $c$-closed graph on $n$ vertices. Fix $((v_l,c_l))_{l=1}^k$. Suppose $i<j\in V(H)$ are both challenging with respect to $((v_l,c_l))_{l=1}^k$. Then they are not adjacent in $H$.
       
    \end{claim}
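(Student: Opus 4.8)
The plan is to argue by contradiction. Suppose $i<j$ are both challenging and, contrary to the claim, adjacent in $H$. Since they are challenging, Claim~\ref{claim:1prime} places both in the unique bad twin group $B$; and since $H[B]$ is a clique (the standing hypothesis of Case~1), their being adjacent makes $i$ and $j$ plus twins in $H$. Fix $((v_l,c_l))_{l=1}^k$. I will show that, under these circumstances, at least one of $i,j$ has at most $n^2 2^c$ choices for its part across all excellent partitions extending the tuple --- i.e.\ is not challenging --- which is the desired contradiction. Throughout, I use that in any $(U_+,U_-)$-prescribed induced $H$-blow-up, since $i\sim j$ the parts $A_i$ and $A_j$ are complete to each other in $G$, and since $i,j$ are twins the parts $A_i,A_j$ have identical adjacency to every other part.

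The argument splits on the values $c_i,c_j$. Suppose first $c_i=0$, so that in every relevant partition $A_i$ is not a clique and hence contains a non-edge $\{x,y\}$ of $G$. Then $A_j\subseteq N_G(x)\cap N_G(y)$ because $A_i$ is complete to $A_j$, and since $x\not\sim y$, the $c$-closedness of $G$ forces $|N_G(x)\cap N_G(y)|\le c-1$. Consequently every value taken by $A_j$ is a subset of $N_G(x')\cap N_G(y')$ for some non-edge $\{x',y'\}$ of $G$, so the number of distinct $A_j$ is at most $\binom n2\,2^{c-1}<n^2 2^c$, contradicting that $j$ is challenging. The case $c_j=0$ is symmetric and contradicts instead that $i$ is challenging.

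Hence $c_i=c_j=1$, i.e.\ $A_i$ and $A_j$ are cliques in every relevant partition. If $i\in U_-$ then $A_i$ is simultaneously a clique and an independent set, so $A_i=\{v_i\}$ is forced and $i$ is not challenging; symmetrically if $j\in U_-$. Otherwise $i,j\notin U_-$. Then in any excellent partition $A_i\cup A_j$ is a clique of $G$, and (since $i,j$ are twins) every vertex of $A_i\cup A_j$ is adjacency-compatible with either of the two parts and every subset of it is a clique; so repartitioning $A_i\sqcup A_j$ into any two nonempty sets again yields a valid $(U_+,U_-)$-prescribed induced $H$-blow-up on the \emph{same} vertex set. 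Moving all but one vertex of $A_j$ into $A_i$ changes $\sum_l|A_l|^2$ by $2(|A_i|-1)(|A_j|-1)$: if $|A_i|,|A_j|\ge 2$ this is strictly positive, contradicting that the partition maximizes the $L_2$ norm; if $|A_i|=1$ while $|A_j|\ge 2$ the norm is unchanged but, since $i<j$, the size sequence strictly increases lexicographically at coordinate $i$, again contradicting excellence. Hence $|A_j|=1$, so $A_j=\{v_j\}$ is the unique choice and $j$ is not challenging --- a contradiction.

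In every case we reach a contradiction, so no adjacent pair of distinct challenging vertices exists; this proves the claim. The step I expect to be the crux is the last case ($c_i=c_j=1$, $i,j\notin U_-$): one must verify carefully that repartitioning $A_i\sqcup A_j$ genuinely preserves the prescribed induced blow-up (adjacency to all other parts is unchanged because $i,j$ are twins, the clique prescriptions survive because subsets of a clique are cliques, and no independence prescription is violated precisely because $i,j\notin U_-$), and then extract the conclusion from the two-tier ``excellent'' tie-breaking rule --- the $L_2$ level eliminates the case where both parts are large, and the lexicographic level, which is exactly where the hypothesis $i<j$ is used, eliminates the case where $A_i$ is the singleton.
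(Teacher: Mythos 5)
Your proof is correct and follows essentially the same route as the paper's: split on $c_i,c_j$, use $c$-closedness of $G$ when one part contains a non-edge, dispose of the $U_-$ subcase by noting a part that is both a clique and an independent set is a singleton, and in the remaining subcase move all but one vertex of $A_j$ into $A_i$ to contradict the $L_2$/lexicographic excellence of the partition (with the lexicographic tier handling $|A_i|=1$ via $i<j$, exactly as in the paper). The only differences are cosmetic (ordering of subcases and the explicit computation $2(|A_i|-1)(|A_j|-1)$ of the norm change).
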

    \begin{proof}
         
    Assume for the sake of contradiction $i,j$ are adjacent. If $c_j = 0$, then there must exist $v_j', v_j'' \in A_j$ such that $v_j', v_j''$ are not adjacent. Since $i,j$ are adjacent, for each fixed $v_j',v_j''$ that are not adjacent there are at most $2^c$ choices for $A_i$. Thus, by considering all possible choices of $(v_j', v_j'') \notin E(G)$, there are at most $n^22^c$ choices for $A_i$, contradicting $i$ is challenging. Analogously, if $c_i=0$, $j$ is not challenging, so from now on we only need to deal with the case when $c_i=c_j=1$.
    
    By claim 1, $i,j$ are necessarily bad, and recall there is only one bad plus twins group, as $H[B]$ is a clique.

     First assume $i \notin U_-$. Thus if $|A_j|>1$, then we can move all but one vertex in $A_j$ to $A_i$, 
    still resulting in a prescribed induced blow-up of $H$, while increasing $|A_1|^2+ \dots + |A_k|^2$ (or this quantity stays the same, but $(|A_1|,\cdots,|A_k|)$ is moved up in the lexicographic order, for $i<j$). Thus, if $A_1\sqcup \cdots \sqcup A_k$ is an excellent partition, then $|A_j|=1$, so this implies $j$ is not challenging with respect to $((v_l,c_l))_{l=1}^k$.
     
     It remains to handle the case when $i \in U_-$. Since $c_i=1$, $G[A_i]$ is both a clique and an independent set, so $|A_i|=1$, so there are at most $n$ choices for $A_i$. This implies $i$ is not challenging. 
    \end{proof}

    Thus, in the Case 1 where $H[B]$ is a clique, $|C|\le 1$ by the hypothesis of statement 1 that there is at most one bad plus twin group. We already handled $|C|=0$, so only need to deal with the case when $|C|=1$. From now on we ignore lexicographic conditions/constraints on excellent partitions, so without loss of generality, assume index $1$ is challenging.

Recall it suffices to only handle the case when  $|C|=1$. First note that the number of choices for $(A_s)_{2 \leq s \leq k}$ is at most $(n^22^c)^{k-1}$. Thus, it suffices to show that for every choice of $(A_s)_{2\le s\le k}$ 
and every choice of $(v_1,c_1)$, there is at most $n^22^c$ choices for $A_1$. 

If $1$ is un-prescribed, then the only choice for $A_1$ that makes $G[A_1\sqcup \cdots \sqcup A_k]$ a maximal $H$-blow-up is the set of vertices in $G$ which have the correct adjacency or non-adjacency to each of the vertices in $A_s$ for $2\leq s \leq k$. Thus the choice of $A_1$ is actually unique (if well-defined with respect to $(v_1,c_1)$). 

If $1 \in U_+$, i.e., $A_1$ needs to be a clique, then the number of choices of $A_1$ is at most the number of maximal cliques in the set of vertices in $G$ which have the correct adjacency or non-adjacency to each of the vertices in $A_s$ for $2\leq s \leq k$. Thus the number of choices for $A_1$ is at most $\min \{3^{(c-1)/3}n^2, 4^{(c+4)(c-1)/2}n^{2-2^{1-c}}\} \leq 2^c n^2$ by \cite{paper1}. This handles case (b), where $B\subseteq U_+$. 
    
    The last case is $1 \in U_-$. This tells us we are in case (a), since in case (b), $B\subseteq U_+$. In case (a), vertex $1$ has at least one more bad plus twin, and that vertex is not in $U_+ \cup U_-$. Say this bad plus twin group is $\{1,\cdots,s\}$ and vertex $2$ is unprescribed. Now we can also forget about the $c_i$'s.
    \begin{itemize}
        \item If there exists $l \in \{2,\cdots,s\}$ such that $A_l$ contains a non-edge, then $A_1$ is contained in the common neighborhood of an non-edge in $A_l$, and thus the number of choices of $A_1$ is at most $2^c$. 
        \item Otherwise, $G[A_2],\cdots,G[A_s]$ are all cliques (*). We claim that after fixing $A_2, \dots, A_{s}$, there is at most one option for $A_1 \cup \dots \cup A_{s}$. This means that there is at most one option for $A=A_1\cup \cdots \cup A_k$.
        
        Since $A_{s+1}, \dots, A_k$ are already fixed, $A_1$ must satisfy that $A_1 \cup \dots \cup A_s$ is a partition which satisfies that $G[A_i,A_j]$ is complete bipartite for all $1\le i<j\le s$, and $G[A_i]$ is a clique (or an independent set) if $i\in U_+,U_-$ respectively. 
        
        By (*), $G[A_2 \cup \dots \cup A_s]$ is a clique. Recall that $2\notin U_+ \cup U_-$. Let $S$ be the set of vertices in $G \setminus \bigcup_{j=2}^k A_j$ which are in the common neighborhood of the $\cup_{j\sim 1} A_j$ and not adjacent to any vertex in $\bigcup_{j \nsim 1} A_j$.  Clearly $A_1\subseteq S$, so $A_1\cup \cdots \cup A_s \subseteq S \cup A_2 \cup \cdots \cup A_s$. On the other hand, $G[S \cup A_2 \cup \cdots \cup A_k]$ can be made an $H$-blow-up as follows: first consider the partition $A'_1\sqcup \cdots \sqcup A'_k$ where $A'_1 = A_2, A'_2 = S$ and $A'_j = A_j$ for all $3\le j\le k$. Since $\{1,\cdots,s\}$ is a bad plus twin group, the only reason why this partition is not an $H$-blow-up is because $G[A'_1]=G[A_2]$ might not be an independent set (in fact, $G[A_2]$ is a clique), while $1\in U_-$. To fix this, note that since $1,2$ are plus twins in $H$, we move all vertices but one from $A'_1$ to $A'_2$, then $G[A'_1]$ is a singleton, and $G[A'_1,A'_2]$ remains complete because $G[A'_1]$ is a clique before moving. Since $2 \notin U_-\cup U_+$, we obtain a blow-up of $H$. 

    Thus, if $A_2,\cdots,A_k$ are fixed and $G[A_i]$ is a clique for all $2\le i\le s$, then for any $A_1$, $A_1 \cup A_2\cup \cdots \cup A_k \subset S\cup A_2\cup \cdots \cup A_k$. Note that $S\cup A_2\cup \cdots \cup A_k$ is a $H$-blow-up, so if $G[A_1\sqcup \cdots \sqcup A_k]$ is a maximal $H$-blow-up, then $A_1$ must be equal to $S$.
    \end{itemize} 

    \textbf{Case 2: $H[B]$ is an independent set. This corresponds to statement 1(c).}
    
    In this case, this bad twin group contains exactly one unprescribed vertex, and every other vertex in $B$ is in $U_-$.

    We now ignore lexicographic constraints on excellent partitions. Label the vertices of the bad twin group as $\{i_1,\cdots,i_t\}$, where $i_1$ is an unprescribed vertex and $i_2,\cdots,i_t $ are in $U_-$. For a partition $A_1\sqcup \cdots \sqcup A_k$ that is an
    $H$-blow-up, if for all $j=2,\cdots,t$, we move all but one vertex from $A_{i_j}$ to $A_{i_1}$, we still get an $H$-blow-up. 
    
    Now we count the number of choices for $(A_s)_{s\ne i_1}$ such that $|A_{i_j}| = 1$ for all $2\le j\le t$ and there exists $A_{i_1}$ such that $\bigsqcup_{l=1}^k A_l$ forms a maximal $H$-blow-up. 
    
    Note there are at most $(n^22^c)^{k-1}$ choices for $(A_s)_{s\ne i_1}$. For each choice of $(A_s)_{s\ne i_1}$, we have to choose $A_{i_1}$ to be the set of vertices that have the correct adjacency and nonadjacency relationships to vertices in $(A_s)_{s\ne i_1}$, since $i_1 \notin U_-\sqcup U_+$. 
    
   Thus, there are at most $(n^22^c)^{k-1}$ choices for $k$-tuples $(A_1,\cdots,A_k)$ such that $A_1\sqcup \cdots \sqcup A_k$ is a maximal $H$-blow-up. Since $(A_1,\cdots,A_k) \mapsto \bigsqcup_{l=1}^k A_l$ is a surjection from these $k$-tuples of vertex sets to maximal $H$-blow-ups, the number of maximal $H$-blow-ups is at most $(n^22^c)^k$, as desired.
    
\end{proof}

\

\begin{proof}[Proof of Theorem \ref{thm:inducedchar} Statement 2.]

Recall $k:=v(H)$. For the other cases where we show there exists $k$-closed graphs $G$ on $n$ vertices such that there exists at least $2^{-3k^2-2k+\frac n2}$ maximal induced $(H,U_+,U_-)$-blow-ups, the key strategy in all the constructions is to somehow stick a matching such that if we choose exactly one vertex from each matching, we get a blow-up, and in any blow-up, there are a bounded number of pairs (for example $\le 3k^2$) in the matching. Furthermore, if $H$ is $c$-closed, our construction guarantees that $G$ is $(c+2)$-closed.

\

\begin{proposition}\label{prop:unifying}
    Suppose the graph $G$ satisfies the following condition: there exists vertices $a_1,a_2,\cdots,a_K,b_1,\cdots,b_K \in V(G)$ and a set of vertices $U\subseteq V(G), U \cap \{a_1,\cdots,a_K,b_1,\cdots,b_K\} = \emptyset$ such that 

    \begin{itemize}
        \item If $V$ is a subset of $ \{a_1,a_2,\cdots,a_K,b_1,\cdots,b_K \}$ such that $V \ne \{a_1,\cdots,a_K\}, V\ne \{b_1,\cdots,b_K\}$ and for each $i\in \{1,\cdots,K\}$, exactly one of $a_i,b_i$ is in $V$. Then $V\sqcup U$ is a (not necessarily maximal) induced $(H,U_+,U_-)$-blow-up.
        \item Any induced $(H,U_+,U_-)$-blow-up containing $U$ and at least one of $a_i,b_i$ for every $i$ must contain at most $K + 3k^2$ elements in $\{a_1,\cdots,a_K,b_1,\cdots,b_K\}$. 
    \end{itemize}
    Then there are at least $2^{-3k^2}(2^K-2)$ vertex sets that form maximal $(H,U_-,U_+)$-blow-ups in $G$.
\end{proposition}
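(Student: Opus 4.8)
The plan is a double-counting argument. For each $i \in \{1,\dots,K\}$ one picks exactly one of $a_i,b_i$, giving $2^K$ selections; discarding the two ``extreme'' ones leaves a collection $\mathcal V$ of $2^K-2$ selector sets $V \subseteq M := \{a_1,\dots,a_K,b_1,\dots,b_K\}$, each meeting every pair $\{a_i,b_i\}$ in exactly one vertex and with $V \ne \{a_1,\dots,a_K\}$ and $V \ne \{b_1,\dots,b_K\}$. By the first hypothesis, for every $V \in \mathcal V$ the set $V \sqcup U$ (a legitimate disjoint union, since $U \cap M = \emptyset$) induces an induced $(H,U_+,U_-)$-blow-up in $G$. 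Since $V(G)$ is finite, every induced blow-up is contained in a maximal one --- iteratively pass to a strict superset that is still such a blow-up, which must terminate --- so I can fix, for each $V \in \mathcal V$, a set $\Phi(V) \supseteq V \sqcup U$ with $G[\Phi(V)]$ a maximal induced $(H,U_+,U_-)$-blow-up. This defines a map $\Phi$ from $\mathcal V$ into the set of maximal induced $(H,U_+,U_-)$-blow-ups of $H$ in $G$, and it suffices to lower-bound the size of its image.

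The crucial step is bounding the fibers of $\Phi$. Fix $S$ in the image. Any $V \in \Phi^{-1}(S)$ satisfies $V \subseteq \Phi(V) = S$, so $S \cap M \supseteq V$ meets every pair $\{a_i,b_i\}$; moreover $S \supseteq V \sqcup U \supseteq U$. Hence $G[S]$ is an induced $(H,U_+,U_-)$-blow-up containing $U$ and at least one of $a_i,b_i$ for every $i$, so the second hypothesis applies and gives $|S \cap M| \le K + 3k^2$. Counting $S \cap M$ one pair at a time --- each pair contributes at least $1$, as it meets $V \subseteq S$, and at most $2$ --- shows $|S \cap M| = K + |P|$, where $P := \{\, i : a_i \in S \text{ and } b_i \in S \,\}$, so $|P| \le 3k^2$. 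Now reconstruct $V$ from $S$: for each $i \notin P$ the set $S$ contains exactly one of $a_i,b_i$, which $V$ is then forced to contain; for $i \in P$ there are exactly two possibilities. Therefore $|\Phi^{-1}(S)| \le 2^{|P|} \le 2^{3k^2}$.

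Putting this together, $2^K - 2 = |\mathcal V| = \sum_{S \in \mathrm{Im}\,\Phi} |\Phi^{-1}(S)| \le |\mathrm{Im}\,\Phi| \cdot 2^{3k^2}$, so there are at least $2^{-3k^2}(2^K-2)$ distinct vertex sets inducing maximal $(H,U_+,U_-)$-blow-ups, as claimed. I do not anticipate a real obstacle: the two points needing care are (i) that passing from $V \sqcup U$ to a maximal blow-up only adds vertices, so the enlarged set still contains $U$ and still meets every pair $\{a_i,b_i\}$, which is what makes the second hypothesis genuinely applicable to $\Phi(V)$; and (ii) the elementary bookkeeping converting ``at most $K+3k^2$ marked vertices in total'' into ``at most $3k^2$ pairs in which both endpoints survive,'' which is exactly what pins the fiber size at $2^{3k^2}$.
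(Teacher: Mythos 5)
Your proof is correct and follows essentially the same route as the paper's: map each of the $2^K-2$ selector sets to a maximal blow-up containing it and bound the fiber sizes via the second hypothesis (at most $3k^2$ indices with both $a_i,b_i$ surviving, giving fibers of size at most $2^{3k^2}$). In fact your bookkeeping is cleaner than the paper's own write-up, which conflates $k$ with $K$ in places and quotes a fiber bound of $2^{2k}$ where $2^{3k^2}$ is what the stated hypothesis actually delivers.
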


\begin{proof}
    for every nonempty $S\subsetneq \{1,\cdots,k\}$, we can find $f(S)\subseteq V(G)$ such that $A_S \subseteq f(S)$, $f(S)$ is a maximal $H$-blow-up, and by the claim, $|f(S) \setminus A_S| \le 2k$. Consider the mapping $$\mathcal{P}(\{1,\cdots,k\}) \setminus \{ \emptyset, \{1,\cdots,k\}\} \to \{ \text{maximal }H\text{-blow-ups}\}, S \mapsto f(S)$$ 

   Note that the fibers of $f$ have size at most $2^{2k}$. This is because a maximal $H$-blow-up $X$ in the image of $f$ must contain $V(H) \setminus \{v_1,v_2\}$ and at least one of $a_i,b_i$ for every $1\le i\le K$, and there must exist at most $2k$ indices $i$ such that both $a_i,b_i$ are contained in $X$. Now suppose $X=f(S)$, then $A_S$ must be obtained by removing exactly one of $a_i,b_i$ for every $i$ such that $a_i,b_i\in X$. Thus, $|f^{-1}(\{X\})| \le 2^{2k}$ for all $X$. Since the source of $f$ has $2^K-2$ elements,  $$|Im(f)| \ge \left\lceil\frac{2^K-2}{2^{2k}}\right\rceil = 2^{K-2k} > 2^{\frac n2 - 2k - 3k^2}.$$
\end{proof}
We first handle the case where $H$ has $b \geq 2$ bad twin groups.

 \begin{claim}\label{claim:disjoint}
     If $v,w$ belong to two different bad twin groups, then $v\not\sim w$. 
 \end{claim}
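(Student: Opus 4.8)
The plan is a short proof by contradiction. Suppose $v$ lies in a bad twin group $B_1$ and $w$ lies in a bad twin group $B_2$ with $B_1 \neq B_2$, yet $v \sim w$ in $H$. I will derive that $v$ and $w$ are in fact twins, which contradicts $B_1 \neq B_2$ since twin groups are equivalence classes.

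The engine of the argument is the elementary fact that two adjacent bad vertices must be twins. Indeed, since $v$ is bad, $N_H(v)$ induces a clique in $H$, and $v \sim w$ puts $w \in N_H(v)$. Hence every $x \in N_H(v)$ with $x \neq w$ is adjacent to $w$, so $N_H(v) \setminus \{w\} \subseteq N_H(w)$; and since $v \notin N_H(v)$, this refines to $N_H(v) \setminus \{w\} \subseteq N_H(w) \setminus \{v\}$. Symmetrically, $w$ being bad together with $v \in N_H(w)$ gives $N_H(w) \setminus \{v\} \subseteq N_H(v) \setminus \{w\}$. The two inclusions together yield $N_H(v) \setminus \{w\} = N_H(w) \setminus \{v\}$, which is exactly the defining condition for $v$ and $w$ to be twins, contradicting the assumption that they belong to distinct twin groups. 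Note that, since moreover $v \sim w$, the argument in fact shows $v$ and $w$ are plus twins, so the same conclusion holds verbatim for distinct bad plus twin groups, which is the form in which this claim is applied later.

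I do not anticipate any genuine obstacle here: this is a definition-chasing lemma. The only point requiring mild care is the set-difference bookkeeping — using that $H$ is simple (so $v \notin N_H(v)$ and $w \notin N_H(w)$) in order that the two one-sided containments align precisely with the equality $N_H(v) \setminus \{w\} = N_H(w) \setminus \{v\}$ characterizing twins, rather than merely giving a one-sided inclusion. Everything else is immediate from the definitions of a bad vertex and of a twin group.
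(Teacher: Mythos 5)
Your proof is correct and follows essentially the same route as the paper's: both derive from badness of $v$ and $w$ plus the adjacency $v\sim w$ that the neighborhoods coincide (the paper writes this as $N(v)\cup\{v\}=N(w)\cup\{w\}$, you as $N(v)\setminus\{w\}=N(w)\setminus\{v\}$, which are equivalent here), concluding that $v,w$ are (plus) twins and contradicting their membership in distinct twin groups. No issues.
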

 \begin{proof}
     We prove by contradiction. If $v\sim w$, then $v \in N(w)$. Since $w$ is bad, $H[N(w)]$ is a clique, so $N(v) \cup \{v\} \supseteq N(w) \cup\{w\}$. Analogously, $N(w) \cup\{w\}\supseteq N(v) \cup \{v\}$, so $v,w$ belong to the same bad plus twin group, which leads to a contradiction.
 \end{proof}

We now divide this into three subcases.

\begin{enumerate}
    \item There are at least two different bad twin groups $B_1\not\subseteq U_-$ and $B_2\not\subseteq U_-$. 
    \item There exists exactly one bad twin group $B$ such that $B\not\subseteq U_-$. 
    \item All bad vertices are in $U_-$. 
\end{enumerate}

\newpage 

    \noindent\textbf{Case $1$: $H$ has at least two bad twin groups, and there are at least two different bad twin groups that are not contained in $U_-$.}

 \begin{figure}
     \centering
     \includegraphics[scale=0.6]{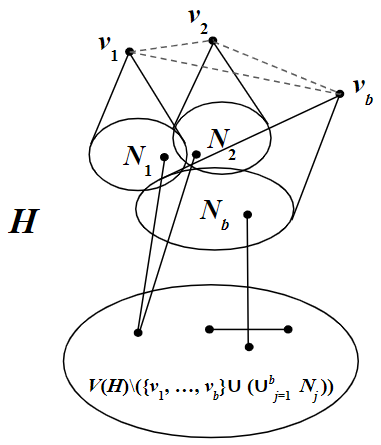}
     \caption{Graph $H$.  The vertices $v_1, \dots, v_b$ are bad and form an independent set. Each $N_1, \dots, N_b$ is a clique and is disjoint from $\{v_1, \dots, v_b\}$. }
     \label{fig:inducedH}
 \end{figure}

  \begin{figure}
     \centering
     \includegraphics[scale=0.6]{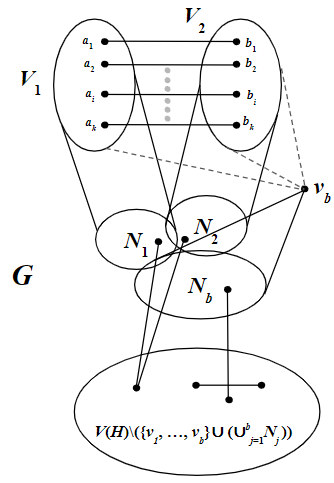}
     \caption{Construction of a $k+1$-closed graph $G$ given $H$. For $i = 1,2$, the vertices $v_i$ is replaced by a clique $V_i$ of size $K$ if $i \in U_+$ or $i \notin U_-\cup U_+$, and respectively and the non-edge between $v_1, v_2$ in $H$ is replaced by a perfect matching $\{(a_1, b_1), \dots, (a_K, b_K)\}.$}
     \label{fig:inducedG}
 \end{figure}

Let $B_1,B_2$ be two distinct bad twin groups not contained in $U_-$. We choose $v_1 \in B_1 \setminus U_-$ and $v_2\in B_2\setminus U_-$ arbitrarily. Define $K := \lceil \frac{n-(k-2)}{2}\rceil$.
 
\textbf{Construction:} For each $n$, we now construct a $(k+1)$-closed graph on $n$ or $n+1$ vertices as follows. 
    Starting from the original graph $H$ (Figure \ref{fig:inducedH}), replace $v_1$ by a clique $\{a_1,\cdots,a_K\}$, and replace $v_2$ by a clique $\{b_1,\cdots,b_K\}$ of size $K$. In between there two cliques, there is a perfect matching $\{(a_1, b_1), \dots, (a_K, b_K)\}$, and add edges from each vertex in $N(v_1)$ to each vertex in $\{a_1,\cdots,a_K\}$ and add edges from each vertex in $N(v_2)$ to each vertex in $\{b_1,\cdots,b_K\}$. 
    See Figure \ref{fig:inducedG} for an illustration. In this construction, we treat $H \setminus \{v_1, v_2\}$ as an induced subgraph of $G$ on $V(G) \setminus \{a_1,\cdots,a_K,b_1,\cdots,b_K\}$, where by a slight abuse of notation, each vertex $v \in V(H) \setminus \{v_1, v_2\}$ is also vertex $v$ in $G$. 

    Observe that if $H$ is $c$-closed, then we claim $G$ is $(c+1)$-closed. Recall $G$ is $\alpha$-closed if $\max\limits_{u \not\sim v, u\ne v} |N(u) \cap N(v)| < \alpha$. 

    \begin{itemize}
        \item $u,v \notin \{a_1,\cdots,a_K,b_1,\cdots,b_K\}$: in this case, if $a_i \in N(u)\cap N(v)$, then in $H$, $u\sim v_1$ and $v\sim v_1$. Since $v_1$ is bad, this implies $u\sim v$. Thus, if $u \not\sim v$, then none of the $a_i$'s or the $b_j$'s are in $N(u)\cap N(v)$, so $|N_G(u) \cap N_G(v)| = |N_H(u)\cap N_H(v)| < c$.
        \item $u \in \{a_1,\cdots,a_K,b_1,\cdots,b_K\}, v\notin \{a_1,\cdots,a_K,b_1,\cdots,b_K\}$. WLOG $u \in \{a_1,\cdots,a_K\}$, for the case where $u \in \{b_1,\cdots,b_K\}$ is handled in a symmetric way. If $u\not\sim v$, then in $H$, $v_1 \not \sim v$. Thus, $v$ is not adjacent to any of $a_1,\cdots,a_K$ in $G$. Thus, if $u=a_i$, then $N_G(u)\cap N_G(v) \subseteq (N_H(v_1) \cap N_H(v)) \cup \{b_i\}$, which has cardinality less than $c+1$. 
        \item $u,v \in \{a_1,\cdots,a_K,b_1,\cdots,b_K\}$. In this case, if $u\not \sim v$, then there exists $i\ne j$ in $[K]$ such that $u = a_i, v=b_j$. Then $N_G(a_i)\cap N_G(b_j) \subseteq \{a_j,b_i\} \cup (N_H(v_1)\cap N_H(v_2)) $, which has at most $c+2$ elements. 
    \end{itemize}
    
   We now point out $2^K-2$ specific (not necessarily maximal) induced blow-ups of $H$ in $G$. 
    For each nonempty subset $S \subsetneq \{1,\cdots,K\}$  we define $$ A_S := \{a_i\colon i\in S\} \bigsqcup \{ b_i \colon i\notin S\} \bigsqcup (V(H)\setminus \{v_1,v_2\})$$ 
    
    Note $G[A_S]$ is a prescribed induced blow-up of $H$. Indeed, it is the case by setting $A_{v_1} = \{a_i\colon i\in S\}, A_{v_2} = \{ b_i \colon i\notin S\}$ and $A_x = \{x\}$ for all $x\in V(H)\setminus \{v_1,v_2\}$.

    Clearly $|A_S| = |V(G)| - K$ and $A_S \ne A_{S'}$ if $S\ne S'$.

    It is possible that each $G[A_S]$ is already a maximal induced blow-up of $H$. But we do not have a direct proof of it. Instead, we will prove the following. 
   \begin{claim}\label{claim:inducedmain}
       If $A\subseteq G$ is a  $H$-blow-up, then there exists at most $2k$ indices $i \in [K]$ can satisfy that both the vertices $a_i, b_i$ are in $A$. 
   \end{claim}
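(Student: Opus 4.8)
The plan is to fix a vertex partition $A=\bigsqcup_{x\in V(H)}A_x$ witnessing that $G[A]$ is an induced $(U_+,U_-)$-prescribed blow-up of $H$, and to analyse, for each index $i$ with $a_i,b_i\in A$, the pair of parts containing $a_i$ and $b_i$. Write $I:=\{i\in[K]:a_i\in A\text{ and }b_i\in A\}$, and for $i\in I$ let $a_i\in A_{p_i}$ and $b_i\in A_{q_i}$; split $I=I_{=}\sqcup I_{\neq}$ according to whether $p_i=q_i$ or not. The two structural facts about $G$ that drive everything are: (i) $N_G(a_i)=\{a_\ell:\ell\neq i\}\cup\{b_i\}\cup N_H(v_1)$ and symmetrically for $b_i$, so in particular $a_i\sim b_i$ but $a_i\not\sim b_\ell$ whenever $\ell\neq i$; and (ii) since $v_1,v_2$ lie in distinct bad twin groups, $v_1\not\sim v_2$ by Claim~\ref{claim:disjoint}, hence $N_H(v_1)$ and $N_H(v_2)$ are subsets of $V(H)\setminus\{v_1,v_2\}=V(G)\setminus\{a_1,\dots,a_K,b_1,\dots,b_K\}$.

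\textbf{Bounding $I_{=}$.} Suppose $i\in I_{=}$, say $a_i,b_i$ both lie in a part $A_r$. Since the blow-up is induced, any vertex $z\in A\setminus A_r$, lying in some part $A_x$ with $x\neq r$, is adjacent to all of $A_r$ or to none of it (according to whether $xr\in E(H)$). But if $z=a_\ell$ for some $\ell$, then $\ell\neq i$ (as $z\notin A_r$), so $z\sim a_i$ while $z\not\sim b_i$, a contradiction; likewise $z$ cannot be any $b_\ell$. Hence $A\setminus A_r\subseteq V(H)\setminus\{v_1,v_2\}$, a set of only $k-2$ vertices, whereas $A\setminus A_r=\bigsqcup_{x\neq r}A_x$ is a disjoint union of $k-1$ nonempty parts — impossible. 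Thus $I_{=}=\varnothing$; in particular $|I_{=}|\le k$ (the weaker statement ``at most one $i\in I_{=}$ per part $A_r$'' already suffices and follows the same way).

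\textbf{Bounding $I_{\neq}$.} For $i\in I_{\neq}$ the edge $a_i\sim b_i$ forces $p_iq_i\in E(H)$. I claim the map $i\mapsto p_i$ is injective on $I_{\neq}$. Indeed, suppose $i\neq j$ in $I_{\neq}$ with $p_i=p_j=:p$. If $q_i=q_j$, then $A_p$ is complete to $A_{q_i}$ (as $pq_i\in E(H)$), so $a_i\sim b_j$, contradicting $a_i\not\sim b_j$. If $q_i\neq q_j$, note $p\neq q_j$ (else $p_j=q_j$, contradicting $j\in I_{\neq}$); then $a_i\not\sim b_j$ together with $a_i\in A_p$, $b_j\in A_{q_j}$, $p\neq q_j$ forces $pq_j\notin E(H)$, whereas $p_jq_j=pq_j\in E(H)$ — a contradiction. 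Hence $|I_{\neq}|\le|\{p_i:i\in I_{\neq}\}|\le|V(H)|=k$, and so $|I|=|I_{=}|+|I_{\neq}|\le 2k$, which proves the claim.

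\textbf{Main obstacle.} The delicate step is the first one: understanding why $a_i$ and $b_i$ can never land in a common part. The key is the rigidity of induced blow-ups — an outside vertex sees each part in an all-or-nothing fashion — which clashes with the ``near-twin but not quite'' relationship of $a_i$ and $b_i$ (equal to, respectively distinct from, each other only through the single matching edge); once that is in hand, counting the $k$ nonempty parts finishes it. The $I_{\neq}$ bound is then a short clean-up, exploiting that adjacent parts are completely joined while $a_i\not\sim b_j$ for $i\neq j$. One must take care with the bookkeeping distinguishing the $a_\ell$'s from the $b_\ell$'s and the index equalities, but no real computation is needed.
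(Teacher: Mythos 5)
Your proof is correct. It reaches the same conclusion as the paper but organizes the argument differently. The paper assumes at least $2k$ matched pairs lie in $A$, pigeonholes the $a_j$'s to find two of them $a_j,a_{j'}$ in a common part $A_z$, and then uses the rigidity of induced blow-ups (a vertex adjacent to one element of a part but not to another must itself lie in that part) to pull $b_j,b_{j'}$ and then every remaining $a_i,b_i$ into $A_z$, leaving only the $k-2$ surviving vertices of $H$ to populate the other $k-1$ nonempty parts. You instead classify each matched pair by whether $a_i$ and $b_i$ share a part: for shared pairs you run essentially the same collapse-and-count argument (triggered by the pair $a_i,b_i$ rather than by two $a$'s in one part), obtaining the stronger conclusion $I_{=}=\varnothing$; for split pairs you prove injectivity of $i\mapsto p_i$ from the complete-or-empty dichotomy between distinct parts, a step with no counterpart in the paper, which yields the sharper bound $k$ in place of $2k$. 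Both routes rest on the same two structural features of the construction (the matching $a_ib_i$ superimposed on the two cliques) and both implicitly use that every part of the blow-up is nonempty and that $k\ge 2$; your version has the minor advantage of not needing any pigeonhole and of isolating exactly which configurations of matched pairs are actually possible.
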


   Assuming Claim \ref{claim:inducedmain}, we can deduce the theorem by applying proposition \ref{prop:unifying} with $U = V(H)\setminus \{v_1,v_2\}$.

   The rest of the proof is dedicated to prove Claim \ref{claim:inducedmain}. We prove by contradiction. 
Suppose there are at least $2k$ indices $j$ such that the vertices $a_j$ and $b_j$ are both in $A$. By the pigeonhole principle, there exists $z\in V(H)$ and at least $2k / |V(H)| = 2$ indices $j$ such that $a_j\in A_z$. Call the indices $j\ne j'$.  Note that $b_j \sim a_j$ but $b_j \not\sim a_{j'}$. Thus, $b_j\in A_z$. Analogously, $b_{j'} \in A_z$. Now, for any $i\notin \{j,j'\}$ such that $a_i \in A$, note that $a_i \sim a_j$ but $a_i\not\sim b_j$. Since $a_j,b_j\in A_z$, it follows that $a_i\in A_z$. Analogously, if $b_i\in A$, then $b_i\in A_z$.

Now note $\{a_1, \dots, a_K, b_1, \dots, b_K\} \cap A \subseteq A_z$, we still need to assign at least one vertex to every $A_y$ for $y\in V(H) \setminus \{z\}$. However, there are only at most $v(H)-2$ vertices in $A \setminus \{a_1, \dots, a_K, b_1, \dots, b_K\} = V(H) \setminus \{v_1,v_2\}$. We can only use $v(H)-2$ vertices, and need to place at least one vertex in $v(H)-1$ parts of the blow-up. This is a contradiction. 

\

    \noindent\textbf{Case $2$: $H$ has at least two bad twin groups, and there is exactly one bad twin group that is not completely contained in $U_-$.}

Let $W$ be a bad twin group contained in $U_-$, and $B$ be the bad twin group not contained in $U_-$. We choose $v \in B\setminus U_-$, and choose $w\in W$ arbitrarily. Note $v\not\sim_H w$. 

\textbf{Construction:} Recall $k:=v(H)$. Let $K  := \lceil \frac{n-(k-2)}{2} \rceil > 20k^5$. We construct a graph $G$ on $k-2+2K $ vertices as follows: we start with $H$, replace $v$ with a clique $\{a_1,\cdots,a_K\}$, connect every $a_i$ with only the neighbors of $v$. We replace $w$ with an independent set $\{b_1,\cdots,b_K\}$. We connect $b_1,\cdots,b_K$ with the neighbors of $w$, and we connect $a_i$ with $b_i$ for all $1\le i\le K$. Call the resulting graph $G$. Note that $G$ is $(k+1)$-closed, and if $H$ is $c$-closed then $G$ is $(c+2)$-closed. 

Note that for any nonempty subset $S \subsetneq \{1,\cdots,K\}$, setting $A_v = \{ a_i \colon i\in S\}, A_w  = \{ b_i \colon i\notin S\}$ and $A_x = \{x\}$ for all $x\in V(H) \setminus \{v,w\}$ does make $\bigsqcup_{u\in V(H)} A_u$ an induced $H$-blow-up. 

\begin{claim}
    If $A$ is a
    $H$-blow-up, then there exists at most $2k$ indices $i$ such that $a_i,b_i\in A$.
\end{claim}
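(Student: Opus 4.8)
The plan is to argue by contradiction, closely following the proof of Claim~\ref{claim:inducedmain}, the key new difficulty being that here the $b_i$'s form an independent set rather than a clique. First I would record the local structure of $G$: the $2K$ gadget vertices $\{a_1,\dots,a_K,b_1,\dots,b_K\}$ satisfy that $\{a_1,\dots,a_K\}$ is a clique, $\{b_1,\dots,b_K\}$ is an independent set, $a_i\sim b_j$ if and only if $i=j$, every $a_i$ has neighborhood exactly $N_H(v)$ among the non-gadget vertices, every $b_i$ has neighborhood exactly $N_H(w)$ among the non-gadget vertices, and $R:=V(H)\setminus\{v,w\}$ sits in $G$ as an induced copy of $H-v-w$ with $|R|=k-2$. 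Recall also that $N_H(v)$ and $N_H(w)$ are cliques (as $v,w$ are bad) and that $v\not\sim_H w$ (Claim~\ref{claim:disjoint}), so $N_H(v),N_H(w)\subseteq R$.

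Suppose for contradiction that some induced $(H,U_+,U_-)$-blow-up $A\subseteq V(G)$ has at least $2k$ indices $i$ with $a_i,b_i\in A$, and fix a partition $A=A_1\sqcup\cdots\sqcup A_k$ realizing it, with parts indexed by $V(H)$. These $\ge 2k$ vertices $a_i$ lie in $k$ parts, so by pigeonhole some part $A_z$ contains two of them, say $a_j,a_{j'}$ with $j\ne j'$. As in Claim~\ref{claim:inducedmain}: since $b_j\sim a_j$ but $b_j\not\sim a_{j'}$ with $a_j,a_{j'}\in A_z$, the vertex $b_j$ can lie neither in a part adjacent to $z$ (which would force $b_j\sim a_{j'}$) nor in a distinct part non-adjacent to $z$ (which would force $b_j\not\sim a_j$), so $b_j\in A_z$; likewise $b_{j'}\in A_z$. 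Iterating the same dichotomy, every $a_i\in A$ has $a_i\sim a_j$ and $a_i\not\sim b_j$ with $a_j,b_j\in A_z$, hence $a_i\in A_z$; and then every \emph{matched} index $i$ (one with $a_i\in A$) has $b_i\in A_z$ from $b_i\sim a_i\in A_z$ and $b_i\not\sim a_j\in A_z$. Thus $A_z$ contains all $a$-vertices and all matched $b$-vertices of $A$. Two consequences follow. First, $A_z$ contains the adjacent pair $a_j,a_{j'}$ and the non-adjacent pair $b_j,b_{j'}$, so $z\notin U_-\cup U_+$: the vertex $z$ is un-prescribed. Second, any part $A_y$ with $y\sim_H z$ is complete to $A_z$, so a short computation (using that $A_y$ is disjoint from $A_z$ and that the only gadget-neighbor of $b_j$ is $a_j\in A_z$) gives $A_y\subseteq N_H(v)\cap N_H(w)\cap A\subseteq R$; since $N_H(v)$ is a clique, any two such parts are complete to one another, so $N_H(z)$ is a clique in $H$, i.e.\ $z$ is a \emph{bad} vertex.

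Because $z$ is bad and un-prescribed, it cannot lie in any bad twin group contained in $U_-$; since $B$ (the group containing $v$) is the unique bad twin group not contained in $U_-$, we conclude $z\in B$. In sub-cases $(a)$ and $(b)$ of Statement~$2$ one has $B\subseteq U_-\sqcup U_+$, contradicting that $z\in B$ is un-prescribed, so those sub-cases are already done. Hence assume we are in sub-case $(c)$: $H[B]$ is an independent set and at least two vertices of $B$ lie outside $U_-$. Then $z\not\sim_H v$ (as $B$ is independent), so $N_H(z)=N_H(v)$; and since the $|N_H(z)|=|N_H(v)|$ parts $A_y$ with $y\sim_H z$ are pairwise disjoint, nonempty, and contained in $N_H(v)\cap N_H(w)\cap A\subseteq N_H(v)$, we get $N_H(v)=N_H(v)\cap N_H(w)$, i.e.\ $N_H(v)\subseteq N_H(w)$, with each $z$-neighbor part a singleton and $\bigsqcup_{y\sim_H z}A_y=N_H(v)$ as a set of vertices of $G$.

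The remaining and most delicate point --- the main obstacle --- is controlling the ``half-present'' indices $i$ with $b_i\in A$ but $a_i\notin A$, which cannot be forced into $A_z$ by the argument above (this is precisely where the independence of the $b_i$'s, as opposed to Claim~\ref{claim:inducedmain}, bites). To handle them I would observe that any two half-present vertices $b_i,b_{i'}$ have the same neighborhood inside $G[A]$, namely $N_H(w)\cap A$; in a blow-up, equal-neighborhood vertices lying in different parts force those parts to be twins in $H$, and since $b_i\not\sim b_{i'}$ those parts are non-adjacent, so all half-present $b_i$ not already in $A_z$ lie in parts whose $H$-labels form a single independent twin group $T^{*}$; moreover, since the parts adjacent to a $T^{*}$-part are squeezed into the clique $N_H(w)$, the group $T^{*}$ is itself \emph{bad}. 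I would then finish by a counting argument: the $k-1$ parts other than $A_z$ must be covered using the at most $k-2$ vertices of $R\cap A$ together with the half-present $b$-vertices, the latter confined to the $T^{*}$-parts; combining this with $N_H(v)\subseteq N_H(w)$ (which pins down where the $R$-vertices of $A$ can sit, since each is forced by its adjacency to $A_z$) and with the hypothesis that $B\setminus\{z\}$ contributes further far parts labelled by twins of $z$, one is left needing to fill more parts than there are available vertices --- a contradiction. The reason this last step is the hard part, rather than a routine count, is exactly that (unlike in Claim~\ref{claim:inducedmain}) the naive comparison of the $k-1$ leftover parts against the $k-2$ vertices of $R$ has slack, because $T^{*}$-parts can absorb half-present $b$-vertices; so the contradiction must come from the interaction between $T^{*}$, the twin group $B$, and the prescription data $(U_+,U_-)$, not from cardinalities alone.
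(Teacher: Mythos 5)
Your opening moves match the paper's: pigeonhole to get $a_j,a_{j'}$ in one part $A_l$, the adjacency dichotomy forcing $b_j,b_{j'}$ and then every $a_i\in A$ and every matched $b_i$ into $A_l$, the observation that $A_l$ is neither a clique nor an independent set so $l$ is bad and unprescribed, hence $l\in B$, and the codegree count $|N_H(l)|=|N_H(v)|\le |N_H(v)\cap N_H(w)|$ giving $N_H(v)\subseteq N_H(w)$. After that, however, two things go wrong. First, you route the argument through sub-cases (a), (b), (c) of Statement 2 of Theorem \ref{thm:inducedchar}; those sub-cases govern the situation where $H$ has exactly one bad twin group, whereas this claim sits inside Case 2 of the proof, where $H$ has at least two bad twin groups and exactly one of them ($B$, containing $v$) is not contained in $U_-$. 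In particular you end up assuming $H[B]$ is an independent set, which is not a hypothesis here ($B$ may be a clique twin group); fortunately nothing essential depends on this, since twins always satisfy $|N_H(l)|=|N_H(v)|$, but the case analysis as written is not valid.

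Second, and more seriously, you never actually close the argument for the ``half-present'' indices ($b_i\in A$ with $a_i\notin A$): you propose collecting them into parts labelled by a twin group $T^{*}$ and then concede that the naive count has slack and that the contradiction ``must come from the interaction between $T^{*}$, $B$, and the prescription data, not from cardinalities alone.'' That is precisely the gap. The missing idea is to exploit strictness of the inclusion you already derived: since $v$ and $w$ lie in different twin groups and $v\not\sim_H w$ (Claim \ref{claim:disjoint}), $N_H(v)=N_H(w)$ is impossible, so $N_H(v)\subsetneq N_H(w)$. Pick $z\in N_H(w)\setminus N_H(v)$; then $z\sim b_i$ but $z\not\sim a_i$ for a pair $a_i,b_i\in A_l$, which forces $z\in A_l$ (note $z\in V(H)\setminus\{v,w\}\subseteq A$ for the blow-ups relevant to Proposition \ref{prop:unifying}). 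Now every $b_j\in A$ is adjacent to $z\in A_l$ yet non-adjacent to some $b_i\in A_l$, so all of $\{b_1,\dots,b_K\}\cap A$ — including the half-present ones — lands in $A_l$. At that point every gadget vertex of $A$ is in $A_l$, leaving at most $k-2$ vertices to populate the remaining $k-1$ parts, and the count does finish the proof with no slack. Without this step (or a completed substitute for it), the proposal does not establish the claim.
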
 
\begin{proof}
    
By the pigeonhole principle, there exists $l\in V(H)$ and at least $2k / |V(H)| = 2$ indices $j$ such that $a_j\in A_l$. Call the indices $j,j'$.  Note that $b_j \sim a_j$ but $b_j \not\sim a_{j'}$. Thus, $b_j\in A_l$. Analogously, $b_{j'} \in A_l$. Now, for any $i\notin \{j,j'\}$ such that $a_i \in A$, note that $a_i \sim a_j$ but $a_i\not\sim b_j$. Since $a_j,b_j\in A_l$, it follows that $a_i\in A_l$. Thus $\{a_1,\ldots,a_K\}\cap A\subset A_l.$  Hence $|A_l|\geq |\{a_1,\ldots,a_K\}\cap A| \ge 2k$ so $l$ is bad. Since $a_j,a_{j'},b_j \in A_l$ and $a_j\sim b_j$ but $b_j\not\sim a_{j'}$, $A_l$ is neither a clique nor independent set, so $l$ is not prescribed. By assumption of Case 2, $l\in B$ thus $l$ is a twin of $v$, and $|N_H(l)| = |N_H(v)|$. 

 Note that the common neighborhood of $a_i,b_i$ in $G$ corresponds precisely to $N_H(v) \cap N_H(w)$, so at most $|N_H(v) \cap N_H(w)|$ vertices can be assigned to $\bigsqcup_{x\in N_H(l)} A_x $. Since each such $A_x$ has to be assigned at least one vertex, it follows that $|\{ A_x \colon x\in N_H(l)\}|=|N_H(l)|\leq |N_H(v) \cap N_H(w)|$. Since $|N_H(l)| = |N_H(v)|$, we have $|N_H(v)|\leq |N_H(v) \cap N_H(w)|$. This implies $ N_H(v)\subseteq N_H(w)$. 

However, since $v,w$ belong to different bad twin groups and $v\not\sim w$, we have $N_H(v) \subsetneq N_H(w)$. Choose $z \in N_H(w) \setminus N_H(v)$. Note that $z\sim b_i$ but $z\not\sim a_i$ for some pair $a_i,b_i\in A$, and we have shown before that $a_i,b_i\in A_l$. It follows that $z\in A_l$.

Now note for any $j$ such that $b_j \in A$, note $b_j$ is adjacent to $z\in A_l$, but also not adjacent to some other $b_i\in A_l$. Thus, $\{b_1,\cdots,b_K\} \cap A \subseteq A_l$. 

From $\{a_1,\cdots,a_K, b_1,\cdots,b_K\} \cap A \subseteq A_l$, we now have $\le v(H)-2$ other vertices in $A$ to be placed in $v(H)-1$ parts, contradiction.
\end{proof}

By applying proposition \ref{prop:unifying} on $U = V(H) \setminus \{v,w\}$, $G$ has at least $2^{-3k^2} (2^K-2)$ maximal $H$-blow-ups. 

\

    \noindent\textbf{Case $3$: $H$ has at least two bad twin groups, both contained in $U_-$.}

Let $B,W$ be arbitrarily chosen bad twin groups, and choose $v\in B, w\in W$ arbitrarily. Note $v \not\sim w$. 

\textbf{Construction:} Recall $k:=v(H)$. Let $K  := \lceil \frac{n-(k-2)}{2} \rceil > 20k^5$. We will construct a graph $G$ with $k-2+2K$ vertices as follows. We start with $H$, replace $v$ with a an independent set $\{a_1,\cdots,a_K\}$, connect every $a_i$ with only the neighbors of $v$. We replace $w$ with an independent set $\{b_1,\cdots,b_K\}$. We connect $b_1,\cdots,b_K$ with the neighbors of $w$, and we connect $a_i$ with $b_i$ for all $1\le i\le K$. Call the resulting graph $G$. Note that $G$ is $(k+1)$-closed. 

Note that for any nonempty subset $S \subsetneq \{1,\cdots,K\}$, setting $A_v = \{ a_i \colon i\in S\}, A_w  = \{ b_i \colon i\notin S\}$ and $A_x = \{x\}$ for all $x\in V(H) \setminus \{v,w\}$ does make $\bigsqcup_{u\in V(H)} A_u$ an induced $H$-blow-up. 

\begin{claim}
    If $A$ is a $H$-blow-up, then there exists at most $k^2$ indices $i$ such that $a_i,b_i\in A$.
\end{claim}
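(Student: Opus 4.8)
The plan is to fix, for a given blow‑up $A$, a partition $A = A_1 \sqcup \cdots \sqcup A_k$ witnessing that $G[A]$ is a $(U_+,U_-)$‑prescribed induced blow‑up of $H$, and to bound the set $I := \{\, i\in[K] : a_i\in A \text{ and } b_i\in A \,\}$ of doubled indices. For $i\in I$ write $p(i)$ and $q(i)$ for the indices of the parts containing $a_i$ and $b_i$. The crucial structural fact about the Case~3 construction is that the only edges of $G$ inside $\{a_1,\dots,a_K,b_1,\dots,b_K\}$ are the matching edges $a_ib_i$: the $a_i$ form an independent set, the $b_i$ form an independent set, and $a_i\sim_G b_j$ iff $i=j$. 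Since $a_i\sim_G b_i$, that edge forces either $p(i)=q(i)$, or $(p(i),q(i))\in E(H)$ with $A_{p(i)}$ complete to $A_{q(i)}$. I will split $I$ into an off‑diagonal part $I_{\mathrm{off}} := \{\, i\in I : p(i)\neq q(i)\,\}$ and diagonal parts $I_z := \{\, i\in I : p(i)=q(i)=z\,\}$ for $z\in[k]$, and bound each.

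For $I_{\mathrm{off}}$, I will show that $i\mapsto\{p(i),q(i)\}$ is injective on $I_{\mathrm{off}}$ with image in $E(H)$, which gives $|I_{\mathrm{off}}|\le\binom{k}{2}$. Indeed, if $i\neq i'$ in $I_{\mathrm{off}}$ had $\{p(i),q(i)\}=\{p(i'),q(i')\}=\{z,z'\}$ with $z\neq z'$, then $(z,z')\in E(H)$, so $A_z$ is complete to $A_{z'}$; however the four vertices $a_i,b_i,a_{i'},b_{i'}$ distribute among $A_z$ and $A_{z'}$, completeness then produces an edge of $G$ among them other than $a_ib_i$ and $a_{i'}b_{i'}$ — namely $a_i\sim a_{i'}$ (if $a_i,a_{i'}$ lie in different parts) or $a_i\sim b_{i'}$ (if they lie in the same part) — contradicting the matching structure of $G$.

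For the diagonal parts, fix $z\in[k]$. If $z$ is bad, the hypothesis of this case (every bad twin group is contained in $U_-$) gives $z\in U_-$, so $A_z$ is an independent set of $G$; since any $i\in I_z$ would place the edge $a_ib_i$ inside $A_z$, we get $I_z=\varnothing$. If $z$ is not bad, then $N_H(z)$ contains a non‑edge $\{s,t\}$, so in the blow‑up $A_s$ is empty to $A_t$ while $A_z$ is complete to both; picking $x\in A_s$, $y\in A_t$ (parts of a blow‑up are nonempty) gives $A_z\subseteq N_G(x)\cap N_G(y)$ with $x\nsim_G y$, and since $G$ is $(k+1)$‑closed this forces $|A_z|\le k$, hence $|I_z|\le k/2$ because $\{a_i,b_i:i\in I_z\}$ are $2|I_z|$ distinct vertices of $A_z$. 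Summing over the $k$ choices of $z$ gives $\sum_z|I_z|\le k^2/2$, so $|I|\le\binom{k}{2}+k^2/2<k^2$, which is the claim. Together with the routine verification that $G$ is $(k+1)$‑closed (and $(c+2)$‑closed when $H$ is $c$‑closed), done exactly as in Cases~1 and~2, the claim lets us invoke Proposition~\ref{prop:unifying} with $U=V(H)\setminus\{v,w\}$ to finish Case~3.

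I expect the diagonal part to be the main obstacle. In Cases~1 and~2 one could propagate ``all doubled vertices lie in a single part'' using that one blown‑up side is a clique (so $a_j\sim a_{j'}$) together with $b_j\nsim a_{j'}$; here both sides are independent, so that propagation fails, and one must instead extract $|A_z|\le k$ from $(k+1)$‑closedness applied to a non‑edge inside $N_H(z)$ — which is precisely why the hypothesis that every bad twin group lies in $U_-$ is needed, namely to rule out a large bad part $A_z$ that would otherwise escape this argument.
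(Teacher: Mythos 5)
Your proof is correct, but it takes a genuinely different route from the paper's. The paper argues by contradiction via pigeonhole: if more than $k^2$ indices are doubled, some part $A_l$ receives at least $k+1$ of the $a_j$'s; the mixed adjacency $b_{j_t}\sim a_{j_t}$, $b_{j_t}\not\sim a_{j_{t'}}$ then forces every corresponding $b_{j_t}$ into $A_l$ as well, so $|A_l|\ge 2k$, whence $l$ is bad (non-bad parts have size at most $k$ by $(k+1)$-closedness), hence $l\in U_-$, yet $A_l$ contains the matching edge $a_{j_1}b_{j_1}$ --- contradiction. You instead give a direct count, splitting the doubled indices according to whether $a_i$ and $b_i$ land in the same part or in two distinct parts: the off-diagonal indices inject into $E(H)$ (any collision would force a forbidden edge $a_i a_{i'}$ or $a_i b_{i'}$ via completeness between the two parts), and each diagonal class is empty when the part is bad (since bad implies $U_-$ implies independent) and has size at most $k/2$ otherwise (since $2|I_z|\le |A_z|\le k$ by $(k+1)$-closedness applied to a non-edge in $N_H(z)$). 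Both arguments rest on the same two structural facts --- non-bad parts are small, and bad parts must be independent in this case --- but your decomposition avoids the propagation step entirely and yields the slightly sharper bound $\binom{k}{2}+k^2/2<k^2$; the paper's pigeonhole is shorter because it never needs to treat the off-diagonal configuration separately. Your closing observation about why the ``collapse everything into one part'' propagation from Cases~1 and~2 fails here is accurate and correctly identifies where the hypothesis that all bad twin groups lie in $U_-$ enters.
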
 
\begin{proof}

Suppose more than $k^2$ indices $i$ such that $a_i,b_i\in A$. By the pigeonhole principle, there exists $l\in V(H)$ and at least $\lceil (k^2+1) / |V(H)| \rceil = k+1$ indices $j$ such that $a_j\in A_l$. Call the indices $j_1,j_2,\ldots,j_{k+1}$.  Note that $b_{j_2} \sim a_{j_2}$ but $b_{j_2} \not\sim a_{j_1}$. Thus, $b_{j_2}\in A_l$, and analogously $b_{j_t} \in A_l$ for all $t\in [k+1]$. Hence $|A_l|\geq 2k$ and $l$ is bad by the construction. But $A_l$ is not an independent set, contradicting the assumption that each bad vertex belongs to $U_-$.
\end{proof}
By applying proposition \ref{prop:unifying} on $U = V(H) \setminus \{v,w\}$, $G$ has at least $2^{-3k^2} (2^K-2)$ maximal $H$-blow-ups. 

\

    \noindent\textbf{Case 4: $H$ has exactly one bad twin group $B$, and $H[B]$ is an independent set with at least two vertices not in $U_-$.} This corresponds to scenario c) of Statement 2 of Theorem \ref{thm:inducedchar}.

Let $V(H)=\{x_1,x_2,\ldots,x_k\}.$ Without loss of generality, assume $x_1,x_2\in B$ and $x_1,x_2\not\in U_-$.

\textbf{Construction:} define $K := \lceil \frac{n-(k-2)}{2}\rceil$.

\begin{itemize}
    \item Start with $H$
    \item Replace $x_1$ with a large clique on $K$ vertices, call the vertices $\{a_1,\cdots,a_K\}$. These vertices are adjacent to the neighbors of $x_1$.
    \item Replace $x_2$ with a large clique on $K$ vertices, call the vertices $\{b_1,\cdots,b_K\}$. These vertices are adjacent to the neighbors of $x_2$.
    \item Add edges $(a_i,b_i)$ for $1\le i\le K$; in particular, if $i\ne j$, $a_i$ is \emph{not} adjacent to $b_j$. 
\end{itemize}

Note that this graph has $k-2+2K$ vertices.

An illustration of this construction is in Figure \ref{fig6}.

It's straightforward to check that the resulting construction is $(k+1)$-closed, where $k = v(H)$.

\begin{figure}
\[\begin{tikzcd}
	&&&&&&& {a_1} && {b_1} \\
	&&&&&&& {a_2} && {b_2} \\
	{?,x_1} && {?,x_2} &&&&& {a_3} && {b_3} \\
	&&&&&&& {a_4} && {b_4} \\
	& x &&&&&&& x
	\arrow[no head, from=1-8, to=1-10]
	\arrow[no head, from=1-8, to=4-8]
	\arrow[no head, from=1-8, to=5-9]
	\arrow[no head, from=1-10, to=4-10]
	\arrow[no head, from=1-10, to=5-9]
	\arrow[no head, from=2-8, to=2-10]
	\arrow[no head, from=2-8, to=5-9]
	\arrow[no head, from=2-10, to=5-9]
	\arrow[no head, from=3-1, to=5-2]
	\arrow[no head, from=3-8, to=3-10]
	\arrow[no head, from=3-8, to=5-9]
	\arrow[no head, from=3-10, to=5-9]
	\arrow[no head, from=4-8, to=4-10]
	\arrow[no head, from=4-8, to=5-9]
	\arrow[no head, from=4-10, to=5-9]
	\arrow[no head, from=5-2, to=3-3]
\end{tikzcd}\]
\caption{This is an example where the base graph is a wedge, where the $?$'s represent unprescribed vertices. Our construction is on the right.}
\label{fig6}
\end{figure}
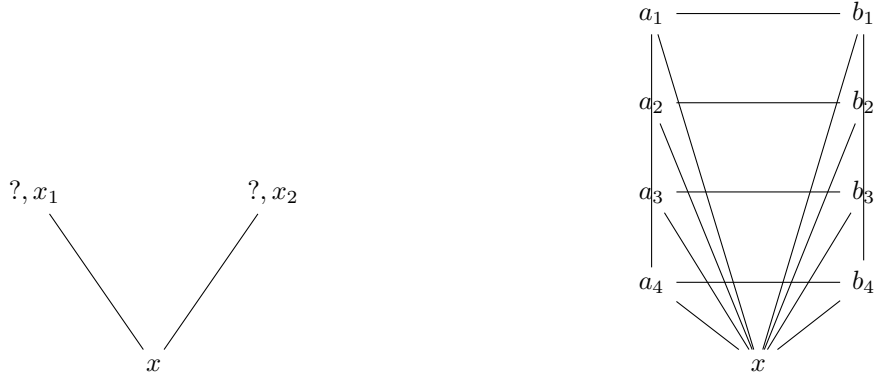

\noindent\textbf{Key claim:} If $S$ is a set of at least $ K+3k^2$ vertices in $\{a_1,\cdots,a_K,b_1,\cdots,b_K\}$ such that for every $i\in[K]$, at least one of $a_i,b_i$ is in $S$, then $S \sqcup (H\setminus \{x_1,x_2\})$ is not an induced $H$-blow-up. 
\begin{proof}

Note that since $B$ contains all the bad vertices in $H$, for all $j\notin B$, we have $|A_j|\le k$ since our construction is $(k+1)$-closed. Thus, there exists at most $k^2$ indices $l$ where both $a_l,b_l$ are in $S$ and one of $a_l,b_l$ belongs to $A_t$ for a non-bad vertex $t$. Hence there exists at least $2k^2$ indices $l$ where both $a_l,b_l$ are in $S$, and belong to $\bigsqcup_{b\in B} A_b $.

We now consider an arbitrary pair $(a_l,b_l)$ in $S$ where $a_l,b_l \in \bigsqcup_{b\in B} A_b$. Since $a_l \sim b_l$ and $H[B]$ is an independent set, it follows $a_l,b_l$ must be put in the same part of the blow-up, i.e. there exists $b_*\in B$ such that $a_l,b_l \in A_{b_*}$. We claim that this implies  $S \subseteq A_{b_*}$. Note that for any $t\ne l$, $a_t$ is adjacent to exactly one of $a_l,b_l$. Since $a_l,b_l\in A_{b_*}$, it follows that if $a_t \in S$, then $a_t\in A_{b_*}$. Similarly we can prove $b_t\in A_{b_*}$ for any $t\neq l$ such that $b_t \in S$.

Remember that our blow-up is $ S \sqcup (H\setminus \{x_1,x_2\})$. Thus, once we put every vertex of $S$ in $A_{b_*}$ there are only $v(H)-2$ vertices remaining. But we need to place them into at least $v(H)-1$ parts,  which leads to a contradiction. This concludes the proof of the key claim. 
\end{proof}

By applying proposition \ref{prop:unifying} on $U = V(H) \setminus \{x_1,x_2\}$, $G$ has at least $2^{-3k^2} (2^K-2)$ maximal $H$-blow-ups.

\

    \noindent\textbf{Case 5: $H$ has exactly one bad twin group $B$, and $H[B]$ is a clique. Moreover, $B\subseteq U_- \sqcup U_+$ and $B\cap U_- \ne \emptyset$. (In particular, this means $B$ is a bad plus twin group.)} This corresponds to scenario a) of Statement 2 of Theorem \ref{thm:inducedchar}.

Define $V(H):= \{x_1,\cdots,x_k\}$. Sometimes, in abuse of notation, we will use $i$ to represent vertex $x_i \in V(H)$. Without loss of generality, let our unique bad plus twin group be $B = \{x_1,\cdots,x_r\}$. We have a clique with vertex set $N(x_1)\sqcup \{x_1\} = \cdots = N(x_r)\sqcup \{x_r\}$, call this $\{x_1,\cdots,x_s\}$. (In particular, this means each vertex in $\{x_{r+1},\cdots,x_s\}$ is adjacent to a vertex outside of $\{x_1,\cdots,x_s\}$). See the following example where the bad plus twin group is $\{x_1\} \subset U_-$.

\[\begin{tikzcd}
	& {x_1} \\
	{x_2} && {x_3} \\
	& {x_4} \\
	& {x_5} \\
	{x_6} && {x_7} \\
	\\
	{x_8} && {x_9}
	\arrow[no head, from=1-2, to=2-1]
	\arrow[no head, from=1-2, to=2-3]
	\arrow[no head, from=2-1, to=2-3]
	\arrow[no head, from=2-1, to=3-2]
	\arrow[no head, from=2-3, to=3-2]
	\arrow[no head, from=3-2, to=4-2]
	\arrow[no head, from=4-2, to=5-1]
	\arrow[no head, from=4-2, to=5-3]
	\arrow[no head, from=5-1, to=5-3]
	\arrow[no head, from=5-1, to=7-1]
	\arrow[no head, from=5-3, to=7-3]
	\arrow[no head, from=7-1, to=7-3]
\end{tikzcd}\]

\textbf{Construction:} 
Let $K := \lceil \frac{n-(k-1)}{2}\rceil$.

\begin{itemize}
    \item start with $H $
    \item Replace $x_1$ with $\{a_1,b_1,a_2,b_2,\cdots,a_K,b_K\}$, add an edge from all of $a_1,b_1,a_2,b_2,\cdots,a_K,b_K$ to $x_2,\cdots,x_s$, where $\{x_2,\cdots,x_s\} = N(x_1)$. 
    \item We blow $x_1$ up into a matching, i.e. $$E(G[\{a_1,b_1,a_2,b_2,\cdots,a_K,b_K\}]) = \{ (a_1,b_1), (a_2,b_2), \cdots, (a_K,b_K)\},$$ where $G$ is the new graph we construct.
\end{itemize}

Note this graph has $k-1+2K$ vertices.

Here is an example for the above graph when $K=3$:

\[\begin{tikzcd}
	{a_1} &&&& {b_1} \\
	{a_2} &&&& {b_2} \\
	& {a_3} && {b_3} \\
	\\
	& {x_2} && {x_3} \\
	&& {x_4} \\
	&& {x_5} \\
	& {x_6} && {x_7} \\
	\\
	& {x_8} && {x_9}
	\arrow[no head, from=1-1, to=1-5]
	\arrow[no head, from=1-1, to=5-2]
	\arrow[no head, from=1-1, to=5-4]
	\arrow[no head, from=1-5, to=5-2]
	\arrow[no head, from=1-5, to=5-4]
	\arrow[no head, from=2-1, to=2-5]
	\arrow[no head, from=2-1, to=5-2]
	\arrow[no head, from=2-1, to=5-4]
	\arrow[no head, from=2-5, to=5-2]
	\arrow[no head, from=2-5, to=5-4]
	\arrow[no head, from=3-2, to=3-4]
	\arrow[no head, from=3-2, to=5-4]
	\arrow[no head, from=3-4, to=5-2]
	\arrow[no head, from=5-2, to=3-2]
	\arrow[no head, from=5-2, to=5-4]
	\arrow[no head, from=5-2, to=6-3]
	\arrow[no head, from=5-4, to=3-4]
	\arrow[no head, from=5-4, to=6-3]
	\arrow[no head, from=6-3, to=7-3]
	\arrow[no head, from=7-3, to=8-2]
	\arrow[no head, from=7-3, to=8-4]
	\arrow[no head, from=8-2, to=8-4]
	\arrow[no head, from=8-2, to=10-2]
	\arrow[no head, from=8-4, to=10-4]
	\arrow[no head, from=10-2, to=10-4]
\end{tikzcd}\]

Note that if $S\subsetneq \{1,\cdots,K\}$ is a nonempty subset, then $\{a_i \colon i\in S\} \sqcup \{b_i \colon i\notin S\} \sqcup \{x_2,\cdots,x_k\}$ is a $(U_+,U_-)$-prescribed induced $H$-blow-up. It is a $(U_+,U_-)$-prescribed induced $H$-blow-up by setting $A_1 = \{a_i \colon i\in S\} \sqcup \{b_i \colon i\notin S\}, A_j = \{x_j\}$ for $2\le j\le k$. 

\noindent\textbf{Key claim:} If $S$ is a set of at least $ K+3k^2$ vertices in $\{a_1,\cdots,a_K,b_1,\cdots,b_K\}$ such that for every $i$, at least one of $a_i,b_i$ is in $S$, then $S \sqcup \{x_2,\cdots,x_k\}$ is not an induced $H$-blow-up. 

\begin{proof}  Similar to Case 4, there are at least $2k^2$ indices $l$ where both $a_l,b_l$ are in $S\cap\bigsqcup_{b\in B} A_b $. Let $a_{l_1},a_{l_2},b_{l_1},b_{l_2}\in S\cap\bigsqcup_{b\in B} A_b.$ Since $a_{l_1} \not\sim a_{l_2}$ and $H[B]$ is a clique, it follows $a_{l_1},a_{l_2}$ must be put in the same part of the blow-up, i.e. there exists $b_*\in B$ such that $a_{l_1},a_{l_2} \in A_{b_*}$. Similarly, since $b_{l_1}\not\sim a_{l_2}$, it follows that $b_{l_1}\in A_{b_*}$. But this implies $A_{b_*}$ is neither a clique nor an independent set, contradicting $b_*\in B\subseteq U_- \sqcup U_+$. 
\end{proof}
By applying proposition \ref{prop:unifying} on $U = V(H) \setminus \{x_1\}$, $G$ has at least $2^{-3k^2} (2^K-2)$ maximal $H$-blow-ups.

\

    \noindent\textbf{Case 6: $H$ has exactly one bad twin group $B$, and $H[B]$ is independent set. Moreover, $B\subseteq U_- \sqcup U_+$ and $B\cap U_- \ne \emptyset$.} This corresponds to scenario b) of Statement 2 of Theorem \ref{thm:inducedchar}.

\textbf{Construction:}

We consider the same construction as Case 5.

\noindent\textbf{Key claim:} If $S$ is a set of $\ge K+3k^2$ vertices in $\{a_1,\cdots,a_K,b_1,\cdots,b_K\}$ such that for every $i$, at least one of $a_i,b_i$ is in $S$, then $S \sqcup \{x_2,\cdots,x_k\}$ is not an induced $H$-blow-up.

\begin{proof}
Similar to Case 4, there exists at least $2k^2$ indices $l$ where both $a_l,b_l$ are in $S$, and belong to $\bigsqcup_{b\in B} A_b $. Moreover, for each pair $(a_l,b_l)\in S\cap \bigsqcup_{b\in B} A_b$, there exists $b'_l\in B$ such that $a_l,b_l\in A_{b'_l}$. Since every vertex in $B$ is prescribed, each $b'_l$ is distinct. Hence $|B|\geq 2k^2>k$, a contradiction.
\end{proof}
By applying proposition \ref{prop:unifying} on $U = V(H) \setminus \{x_1\}$, $G$ has at least $2^{-3k^2} (2^K-2)$ maximal $H$-blow-ups. 
\end{proof}

\begin{corollary}\label{cor: finiteFamilyInduced}
 Let $\mathcal{H} = \{ (H_1, U_{1,+}, U_{1,-}), (H_2, U_{2,+}, U_{2,-}),\cdots, (H_m, U_{m,+}, U_{m,-})\}$. Let $c> \max_i v(H_i) + 2$. If there exists $i$ such that $(H_i,U_{i,+},U_{i,-})$ is in Case 2 of theorem \ref{thm:inducedchar} then $\mathcal{H}$ is a $c$-closed induced exponential family. Otherwise, $(H_i,U_{i,+},U_{i,-})$ is in Case 2 of theorem \ref{thm:inducedchar} for all $i$, and $\mathcal{H}$ is a $c$-closed induced polynomial family.
\end{corollary}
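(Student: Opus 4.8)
One preliminary remark on the statement: the hypotheses defining Case~1 and Case~2 of Theorem~\ref{thm:inducedchar} are mutually exclusive and, as observed right after that theorem, jointly exhaust all triples $(H,U_+,U_-)$. Hence ``otherwise'' in the corollary means exactly ``$(H_i,U_{i,+},U_{i,-})$ falls under Statement~1 of Theorem~\ref{thm:inducedchar} for every $i$'' (the printed statement has a typo, reading ``Case~2'' where it should read ``Case~1''). The plan is then to read off each half of the dichotomy from the corresponding half of Theorem~\ref{thm:inducedchar}, using nothing more than the finiteness of $\mathcal H$ and the bound $c>\max_i v(H_i)+2$.

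For the polynomial half I would put $K:=\max_i v(H_i)$ and apply Statement~1 of Theorem~\ref{thm:inducedchar} to each pattern in turn: it bounds the number of vertex sets that form maximal $(U_{i,+},U_{i,-})$--prescribed induced blow-ups of $H_i$ in any $c$-closed $n$-vertex graph by $(2n)^{v(H_i)}(n^2 2^c)^{v(H_i)}$, which is at most the single polynomial $f(n):=(2n)^K(n^2 2^c)^K$ (here $c$ and $K$ are fixed constants). Since this one $f$ serves for every pattern in the finite family, Definition~\ref{def: familyinduced} gives directly that $\mathcal H$ is a $c$-closed induced polynomial pattern family.

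For the exponential half I would fix an index $i_0$ with $(H_{i_0},U_{i_0,+},U_{i_0,-})$ in Case~2 and write $k_0:=v(H_{i_0})$. Statement~2 of Theorem~\ref{thm:inducedchar} then supplies, for every integer $n\ge 1000k_0^5$, a $(k_0+1)$-closed graph $G_n$ with $v(G_n)\in\{n,n+1\}$ in which the number of maximal $(U_{i_0,+},U_{i_0,-})$--prescribed induced blow-ups of $H_{i_0}$ is at least $2^{-3k_0^2-2k_0+n/2}$. Here is where the hypothesis on $c$ enters: since $c$ is an integer with $c>\max_i v(H_i)+2\ge k_0+2>k_0+1$, every $(k_0+1)$-closed graph is automatically $c$-closed, so each $G_n$ is $c$-closed. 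Re-indexing so the sequence runs over $m\ge 1$ (say $G'_m:=G_{m+\lceil 1000k_0^5\rceil}$), we get $v(G'_m)\to\infty$, and since $n\ge v(G'_m)-1$ the count for pattern $i_0$ in $G'_m$ is at least $2^{(v(G'_m)-1)/2-3k_0^2-2k_0}$; for all sufficiently large $m$ this exceeds $e^{v(G'_m)^{1/2}}$, because $2^{v/3}=e^{(\ln 2)v/3}$ eventually dominates $e^{v^{1/2}}$. Taking $\epsilon=1/2$ and letting $i_0$ be the witnessing index for every $m$, Definition~\ref{def: familyinduced} shows $\mathcal H$ is a $c$-closed induced exponential pattern family.

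I do not expect a genuine obstacle, since all the combinatorial content is already inside Theorem~\ref{thm:inducedchar}; the only items needing care are bookkeeping. First, one must check that the $(k_0+1)$-closed construction of Statement~2 is still $c$-closed, which is precisely what $c>\max_i v(H_i)+2$ guarantees --- alternatively, one can note that any $H_i$ on $k_i$ vertices is $(k_i-1)$-closed and invoke the refined ``$(c+2)$-closed'' form of Statement~2. Second, one must carry out the routine conversion of the $2^{\Theta(n)}$ lower bound into the $e^{v(G_n)^\epsilon}$ form demanded by Definition~\ref{def: familyinduced}, absorbing the additive constants $3k_0^2+2k_0$ and the offset between $n$ and $v(G_n)$.
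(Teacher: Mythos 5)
Your proposal is correct and is exactly the intended derivation: the paper states this corollary without proof, as an immediate consequence of the two halves of Theorem~\ref{thm:inducedchar}, and your bookkeeping (a single dominating polynomial $f(n)=(2n)^K(n^2 2^c)^K$ for the finite family, and the observation that a $(k_0+1)$-closed graph is automatically $c$-closed when $c>k_0+2$, plus the routine conversion of $2^{\Omega(n)}$ into $e^{v(G_n)^{1/2}}$) is precisely what is needed. Your remark that the second ``Case~2'' in the printed statement is a typo for ``Case~1'' is also correct.
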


\section{Counting Maximal Blow-ups for Patterns in an Infinite Family $\mathcal{H}$}
\subsection{Challenges for Infinite Families}
Recall the definitions (same definitions from Definitions \ref{def: polyexpinf} and \ref{def: familyinduced}): 
\begin{definition}
Let $\mathcal{H} = \{(H_1,U_1), (H_2,U_2), \dots\}$ be a family of graphs, where $H_i$ is a simple graph and $U_i \subseteq V(H_i)$ is the set of vertices in $H_i$ that are prescribed to be cliques. We say $\mathcal{H}$ is a \textbf{$c$-closed polynomial pattern family}, if there is a polynomial $f(x)\in \mathbb R[x]$ such that for an arbitrary $c$-closed graph $G$ on $n$ vertices,  $$\# \{S\subset V(G) \colon \text{ there exists } i \text{ such that } G[S] \text{ is a maximal } H_i\text{-blow-up}\} \le f(n).$$ We say $\mathcal{H}$ is a \textbf{$c$-closed exponential pattern family}, if there exist a constant $\epsilon>0$ and a sequence of $c$-closed graphs $(G_n)_{n\ge 1}$  such that $v(G_n)\to \infty$, and for all $n$, $$\# \{S\subset V(G_n) \colon \text{ there exists } i \text{ such that } G_n[S] \text{ is a maximal } H_i\text{-blow-up}\} \ge e^{v(G_n)^\epsilon}.$$ 
\end{definition}

One can also easily define it analogously for induced patterns (see Definition \ref{def: familyinduced}). 



There are many difficulties in applying our insights for the solution of this problem to one graph to characterize in general whether a family is exponential or polynomial. Suppose we have a family $\{(H_i,U_i)\}_{i\ge 1}$ with $v(H_i) \to \infty$. If we apply Theorem \ref{thm: noninduced finite}, then we can see that there exists a graph $G$ on $n$ vertices (assume $n$ very large compared to $c$) with at most $(2n^{\max\{c-1,1\}})^{v(H_i)}$ maximal blow-ups. However, as $v(H_i) \to \infty$, this does not immediately allow us to conclude that there is a polynomial $f \in \mathbb{R}[x]$ such that for an arbitrary $c$-closed graph $G$ on $n$ vertices,  $$\# \{S\subset V(G) \colon \text{ there exists } i \text{ such that } G[S] \text{ is a maximal } H_i\text{-blow-up}\} \le f(n).$$

 Another difficulty is that suppose we can have a sequence $c$-closed exponential pattern families $\mathcal{F}_1, \mathcal{F}_2,\cdots$ and we might end up use these to construct a family that is not a $c$-closed exponential pattern family as follows: we choose $F_{ki_k} \in \mathcal{F}_k$ such that for any $G$, the number of maximal $F_{ki_k}$-blow-ups of $G$ is $\le \exp( v(G)^{2D_i})$. If $D_i \to 0$, then the artificially constructed family $\{F_{k,i_k}\}_{k\ge 1}$ is not exponential.

\subsection{Infinite Families of Bounded Degree}\label{sec: bounded}
In this subsection, we prove the following general theorem (restatement of Theorem \ref{thm:infinitebd}). 
\begin{theorem}
    For any  $d > 1$, there exists $c > 0$ such that if each pattern $H$ in $\mathcal{H}$ has maximum degree bounded above by $d$, and $\mathcal{H}$ is an infinite family, then $\mathcal{H}$ is a $c$-closed exponential pattern family. 
\end{theorem}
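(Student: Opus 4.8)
The plan is to derive this theorem from Theorem~\ref{thm: bounded}: that theorem already manufactures, for every sufficiently large connected graph of bounded degree, a host graph with exponentially many maximal blow-ups, so the only remaining work is to extract suitable patterns from $\mathcal{H}$ and to read off the closure parameter. First I would normalize: replacing $d$ by $\max\{2,\lfloor d\rfloor\}$, we may assume $d\ge 2$ is an integer while every pattern in $\mathcal{H}$ still has maximum degree at most $d$. Set $M:=2^{d^{21d+20}}$ and $c:=dM+1$. The key elementary observation is that any graph of maximum degree at most $\Delta$ is automatically $(\Delta+1)$-closed, because two distinct vertices $u,v$ always satisfy $|N(u)\cap N(v)|\le\deg(u)\le\Delta$; hence every graph $G$ produced by Theorem~\ref{thm: bounded}, which has $\Delta(G)\le dM$, is $c$-closed, and this $c$ depends only on $d$.

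For the main case, suppose $\mathcal{H}$ contains infinitely many connected patterns. Since there are only finitely many graphs on any fixed number of vertices with maximum degree at most $d$, these patterns have unbounded size, so I can pick a sequence of connected patterns $H^{(1)},H^{(2)},\dots\in\mathcal{H}$ with $v(H^{(j)})\to\infty$ and $v(H^{(j)})>d^{d^{d^d}}$ for every $j$. Applying Theorem~\ref{thm: bounded} to each $H^{(j)}$ yields a graph $G_j$ with $v(H^{(j)})\le v(G_j)\le M\,v(H^{(j)})$, with $\Delta(G_j)\le dM$ (so $G_j$ is $c$-closed), and with at least $2^{v(G_j)/M}$ vertex sets corresponding to maximal blow-ups of $H^{(j)}$. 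Since $v(G_j)\ge v(H^{(j)})\to\infty$, we have $2^{v(G_j)/M}=e^{(\ln 2/M)\,v(G_j)}\ge e^{v(G_j)^{1/2}}$ once $v(G_j)$ is large; discarding finitely many indices and relabelling, $(G_n)_{n\ge1}$ witnesses that $\mathcal{H}$ is a $c$-closed exponential pattern family with $\epsilon=1/2$. By the remark after Theorem~\ref{thm: bounded} the very same host graphs work for the prescribed $(H,U)$-blow-ups and the induced $(H,U_+,U_-)$-blow-ups, so the argument does not depend on the prescriptions carried by the patterns.

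The remaining case, which I expect to be the main obstacle, is when $\mathcal{H}$ has only finitely many connected patterns, so all large patterns are disconnected; here Theorem~\ref{thm: bounded} cannot be applied directly and one must pass to connected components. If the connected graphs occurring as a component of some pattern in $\mathcal{H}$ form an infinite set, I would take a large such component $C$ (with $v(C)>d^{d^{d^d}}$) of a pattern $H=C\sqcup H'\in\mathcal{H}$, apply Theorem~\ref{thm: bounded} to $C$ to obtain $G_C$ with $2^{\Omega(v(G_C))}$ maximal $C$-blow-ups, and set $G:=G_C\sqcup D$ where $D$ is a \emph{rigid} maximal $H'$-blow-up (one admitting no regrouping and no vertex addition); the delicate step is to verify that, since $C$ is connected and $G_C,D$ lie in different components of $G$, every maximal $H$-blow-up of $G$ decomposes as a maximal $C$-blow-up of $G_C$ together with $D$, giving the same exponential count. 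If instead all components of all patterns of $\mathcal{H}$ have bounded size, then, $\mathcal{H}$ being infinite, some fixed connected graph $C_0$ on at least two vertices appears with unbounded multiplicity, and I would take the host graph to be a disjoint union of many ``locked'' $C_0$-gadgets in the spirit of the matching construction behind Proposition~\ref{prop:unifying}, so that a maximal blow-up corresponds to selecting roughly half of the gadgets and one gets $\binom{N}{N/2}=2^{\Omega(N)}$ maximal blow-ups. The subtlety to be careful about is that a $K_1$ component cannot be locked and instead absorbs any extra vertex — this is exactly the degenerate family $\{\text{empty graph on }t\text{ vertices}\}_{t\ge1}$, whose unique maximal blow-up is the whole vertex set, so the statement is to be understood for patterns with no isolated vertices. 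Establishing rigidity of the gadgets and excluding cross-component regrouping is the part I expect to demand the most care; the reduction to Theorem~\ref{thm: bounded} itself is routine.
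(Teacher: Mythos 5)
Your deduction in the case where $\mathcal{H}$ contains infinitely many connected patterns is exactly the paper's intended argument: the paper presents Theorem~\ref{thm:infinitebd} as ``an easy corollary'' of Theorem~\ref{thm: bounded} and leaves the deduction implicit, and your version supplies the missing bookkeeping correctly. In particular, the observation that a graph of maximum degree $\Delta$ is $(\Delta+1)$-closed (so $c=d\cdot 2^{d^{21d+20}}+1$ works uniformly), the fact that an infinite bounded-degree family must contain patterns of unbounded size, and the conversion of $2^{v(G_j)/M}$ into $e^{v(G_j)^{1/2}}$ for all sufficiently large $j$ are all sound and are precisely what is needed to match Definition~\ref{def: polyexpinf}.

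Where your proposal goes beyond the paper is the disconnected case, and here you have identified a real gap in the paper rather than introduced one: Theorem~\ref{thm: bounded} is stated only for connected $H$, so if every sufficiently large pattern in $\mathcal{H}$ is disconnected the paper's reduction does not literally apply, and the paper never addresses this. However, your treatment of that case is a sketch, not a proof. The existence of a ``rigid'' maximal $H'$-blow-up $D$ is not established (for many $H'$ no finite host admits a blow-up that excludes all regrouping); and even granting it, a maximal blow-up of $H=C\sqcup H'$ in $G_C\sqcup D$ need not split the way you want, since the components of $H$ may be assigned to components of $G$ in many ways and maximality is a joint condition across this assignment (a part corresponding to an isolated or unconstrained vertex of $H$ can even straddle components of $G$). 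The final sub-case, where a fixed component $C_0$ recurs with unbounded multiplicity, is likewise only gestured at. So: relative to what the paper proves, your argument is complete and essentially identical; relative to the theorem as literally stated, both you and the paper leave the all-patterns-disconnected case open, and your proposed repair would need substantial additional work.
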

This assumption of bounded degree is natural, for example, when $\mathcal{H}$ consists of infinitely many cycles, or consists of infinite many of paths, or consists of infinitely many binary trees. 
This result is an easy corollary of the following theorem (theorem \ref{thm: bounded}). 
\begin{theorem}
    Let $d\ge 2$ be a positive integer. Let $H$ be a connected graph with maximum degree $d$ and $N$ vertices, where $N > d^{d^{d^d}}$. Then there exists a graph $G$ such that $N \leq |V(G)| \leq 2^{d^{21d+20}}N$, the maximum degree is at most $d2^{d^{21d+20}}$, and there are at least $2^{|V(G)|/2^{d^{21d+20}}}$ vertex sets in $G$ corresponding to maximal blow-ups of $H$.   
\end{theorem}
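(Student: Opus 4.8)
The plan is to build $G$ by taking the base graph $H$ and attaching to it a gadget that encodes exponentially many independent binary choices, each of which can be realized as a maximal blow-up of $H$. First I would exploit the bounded degree: since $H$ is connected with $N > d^{d^{d^d}}$ vertices and maximum degree $d$, a counting/pigeonhole argument shows $H$ must contain many vertices whose closed neighborhoods of bounded radius are pairwise isomorphic — more precisely, there exist $\Omega(N/2^{\mathrm{poly}(d)})$ vertices $v$ which are all ``bad-like'' in a suitable local sense, or around which one can splice in the same matching gadget as in the construction of Proposition \ref{prop:unifying}. For each such local site, I would replace a vertex (or a small bounded-diameter piece) with a two-vertex matching $(a_i, b_i)$, keeping all adjacencies to the rest of $H$ identical, exactly as in the constructions of Cases 1--6 for the induced characterization; the point is that choosing $a_i$ versus $b_i$ gives distinct blow-ups, and the sites are far enough apart in $H$ (distance $\gg d$) that the choices are independent.

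The key steps, in order, would be: (i) prove the structural lemma that a connected bounded-degree graph on $N$ vertices contains $\geq N/2^{d^{O(d)}}$ vertices that are pairwise at distance $> 2$ (an easy greedy/independent-set argument in $H^2$, whose maximum degree is $\leq d^2$) and at which the gadget can be inserted without changing the degree by more than a factor $2^{d^{O(d)}}$; (ii) perform the substitution at each chosen site, producing $G$ with $N \leq |V(G)| \leq 2^{d^{21d+20}} N$ (the blow-up replaces one vertex by a bounded-size gadget only at a $1/2^{\mathrm{poly}(d)}$-fraction of vertices, hence the size bound) and maximum degree $\leq d \cdot 2^{d^{21d+20}}$; (iii) show that for every subset $S$ of the gadget sites, choosing $a_i$ for $i \in S$ and $b_i$ for $i \notin S$ (and extending to a maximal blow-up using the argument of Proposition \ref{prop:unifying}) yields a maximal blow-up of $H$, and that the fibers of the ``complete to a maximal blow-up'' map have size at most $2^{O(\text{number of sites})/2}$ — essentially the Proposition \ref{prop:unifying} counting — so that the number of distinct maximal blow-ups is at least $2^{(\text{number of sites})/2} \geq 2^{|V(G)|/2^{d^{21d+20}}}$.

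The main obstacle I expect is step (iii): verifying that in $G$, a blow-up of $H$ cannot ``collapse'' many gadget pairs into a single part. In the finite-pattern constructions this was handled by the bad-twin-group analysis, but here $H$ is an arbitrary connected bounded-degree graph, so I must argue via the bounded degree directly: any part of a blow-up sitting at a non-bad vertex has size $\leq c$ (by $c$-closedness of $G$, which I must also verify the construction enjoys for the $c$ promised by Theorem \ref{thm:infinitebd}), and if many $a_i, b_i$ pairs landed in one part, that part would have size $\gg c$, forcing its corresponding $H$-vertex to be bad — but then a neighbor-counting argument, exactly as in Claim \ref{claim:inducedmain} and the Case 2 claim, shows the remaining $v(H) - O(1)$ host vertices cannot cover the $v(H) - O(1)$ other parts, a contradiction. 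Making the constants explicit (tracking $d^{21d+20}$ through the gadget sizes, the distance-$2$ independent set density, and the $c$-closure bound) is routine but tedious; the conceptual content is entirely in choosing the gadget sites far apart and reusing the pigeonhole argument of the earlier claims.
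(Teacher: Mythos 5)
Your high-level template (plant many far-apart gadget sites, build exponentially many ``naive'' blow-ups by making one binary choice per site, then bound the fibers of a completion-to-maximal map) is indeed the paper's strategy. But the gadget you propose and the mechanism you invoke to control the fibers are both imported from the \emph{induced} setting of Theorem~\ref{thm:inducedchar}, and neither survives in the non-induced setting of Theorem~\ref{thm: bounded}. Concretely: if you replace a vertex $v$ of $H$ by an adjacent pair $a_i,b_i$ with identical adjacencies to the rest of the graph, then $a_i$ and $b_i$ are plus twins in $G$, and in a non-induced, unprescribed blow-up the part corresponding to $v$ may simply be taken to be $\{a_i,b_i\}$; every one of your $2^{\#\mathrm{sites}}$ choices therefore extends to the \emph{same} maximal blow-up containing all the $a_i$'s and $b_i$'s, and the count collapses. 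The matching gadget of Proposition~\ref{prop:unifying} (two cliques joined by a perfect matching, replacing two non-adjacent vertices $v_1,v_2$) has the same problem here: its exclusion mechanism is the requirement that $V_{v_1}$ be \emph{empty} to $V_{v_2}$, which only exists for induced blow-ups. Likewise, your fiber bound ``a part at a non-bad vertex has size $\le c$ by $c$-closedness'' uses that parts over a non-edge of $H$ are empty to each other, which again fails for non-induced blow-ups; the paper instead bounds part sizes by the maximum degree of $G$ (Claim~\ref{claim:smallphi}).

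The paper's actual construction is built around a different exclusion mechanism that works without induced-ness. It first selects, by a weighted pigeonhole over degree classes, a degree $i^*$ such that degree-$i^*$ vertices are plentiful while vertices of smaller degree are rare, and picks $T$ twin groups of degree $i^*$ that are pairwise at distance $\ge 11$ and at distance $\ge 5$ from every vertex of degree $<i^*$. It then duplicates an \emph{adjacent pair} of twin groups $([v],[u])$ into $C_d$ copies wired so that $v_{j,l}\sim u_{j,l'}$ within a copy but copies are mutually non-adjacent across indices $j\ne j'$; a further case analysis (Types I(a), I(b), II) is needed depending on the degrees in the neighborhood. The fiber bound then comes not from badness or $c$-closedness but from counting twin groups of (virtual) degree $i^*$ inside any blow-up (Claims~\ref{claim:degreedelta} and~\ref{claim:smallnum}): the minimality of $i^*$ and the distance-$5$ buffer force any blow-up to contain only $n_{i^*}+o(T)$ such twin groups, so it can absorb at most $2^{o(T)}$ (or $C_d^{o(T)}$) naive blow-ups. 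Your distance-$2$ independent-set lemma and the bad-twin-group analysis do not supply this; without the degree selection and the cross-non-adjacent copy structure, step (iii) of your plan has no valid argument, so the proposal as written has a genuine gap.
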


Note that no attempt is made to optimize the constants.

\begin{proof}
 Let the minimum degree in $H$ be $\delta$. For each $\delta \leq j \leq d$, let $N_j$ be the number of vertices in $H$ of degree $j$. We choose the minimal $i$ such that $N_i > \frac{N}{d^{20(d-i+1)-1}}$ and it is easy to see such an $i$ exists by the fact that $\sum_{i=\delta}^d \frac{N}{d^{20(d-i+1)-1}} < N$. In particular, this means that at least \[\frac{N}{d^{20(d-i+1)-1}}\left(1-\frac{2}{d^{16}}\right) \ge \frac{N}{2d^{20(d-i+1)-1}}\] vertices $v$ of degree $i$ such that  $\text{dist}(v,v') \ge 5$ for all $v'\in V(H), \deg(v')<i$. This is because for all $j=\delta,\cdots,i-1$, there are at most $\frac{N}{d^{20(d-j+1)-1}} $ vertices of degree $j$, so there are a total of at most $\frac{2N}{d^{20(d-i+2)-1}}$ vertices of degree less than $i$. Thus, the number of vertices within the $4$-neighborhood of a vertex of degree at most $i$ is at most $\frac{2N}{d^{20(d-i+2)-1}} d^4 = \frac{2N}{d^{20(d-i+1)-1}} d^{-16}$.

Recall we use $[v]$ to denote the twin group of $v$, which is defined to be the set containing all twins of $v$ and $v$ itself. Notice each twin group is either a clique or an independent set, and vertices in the same twin group must have the same degree. 

Let $n_{i^*}$ denote the number of twin groups of vertices in degree $i^*$. If a vertex is adjacent to one vertex in a twin group, then it is adjacent to all vertices of that twin group. Since the maximum degree of $H$ is $d$, there can be no more than $d$ vertices in a twin group. Therefore 
\begin{equation}
    n_{i^*} \geq \frac{N_{i^*}}{d} \ge \frac{N}{2d^{20(d-i^*+1)}} . \label{eq:twingroup1}
\end{equation}

  Given a twin group $[w]$, we denote $N_{r}([w])$ as the set of vertices of distance $r$ to $w$ in $H$ and are not in $[w]$ (note this does not depend on the representative $w\in [w]$). We denote $N_{\leq r}([w])$ as the set of vertices of distance at most $r$ from $w$ in $H$ and are not in $[w]$. The \textbf{degree of a twin group} is just the degree of a vertex in the twin group.

\

\subsubsection{Construction of the host graph $G$. }

 We have just shown that there are at least $\frac{N}{2d^{20(d-i^*+1)}}$ twin groups $[v]$ of degree $i$ such that $N_{\le 4}([v])$ does not contain any vertices of degree smaller than $i^*$. 

We choose twin groups of degree $i^*$ such that any two chosen vertices are at least distance $11$ away until we have chosen exactly $T := \frac{N}{2d^{20d-20i^*+31}}$ twin groups. \footnote{Technically the number of twin groups is an integer, however an error of 1 does not affect future arguments and bounds. } Since choosing each vertex prevents us from choosing at most $d^{11}$ vertices, this is always doable.

    For each chosen twin group $[x]$, we distinguish the following two cases.
    \begin{enumerate}
        \item We say $[x]$ is of {\it Type I} if at least one of the following holds. \begin{enumerate}
            
            \item If there is some vertex $v$ in $N_{\leq 2}([x]) \cup [x]$ of degree $i$ and there is $u \in N(v)\setminus [v]$ with $\deg(u)=i^*$. 
            \item If we are not in 1(a), so in particular, all vertices in $N(x)\setminus [x]$ have degree greater than $i^*$. Furthermore, there is a neighbor of $x$ not in $[x]$ which is not adjacent to any vertex of degree $i^*$ not in $[x]$. 
        \end{enumerate}
        \item We say $[x]$ is of {\it Type II} if $[x]$ is not of Type I. In other words, every vertex in $N_{\leq 2}([x]) \cup [x]$ has degree at least $i^*$, and since $[x]$ is not of Type I (a), for every vertex $w$ in $N_{\leq 2} ([x])\cup [x]$ of degree $i^*$, all of its neighbors not in $[w]$ have degree larger than $i^*$. Note $[x]$ is allowed to be a clique.  
        
    \end{enumerate}

One observation is that note that since $H$ is connected, $N(x) \setminus [x]$ is always nonempty. The reason is follows. If $N(x) \subseteq [x]$, then observe that for any $x'\in N(x)$, $N(x') \sqcup \{x'\} = N(x) \sqcup \{x\}$. Thus, $H\setminus (N(x)\sqcup \{x\}), N(x)\sqcup \{x\}$ forms a disconnection of $H$, since $|H| \ge d^{d^{d^d}}$ and $|N(x)\sqcup \{x\}| \le d+1$. 

We have three scenarios on the construction:

\begin{itemize}
    \item Case 1 (a): There are at least $\frac T3$ chosen twin groups of Type I (a).
    \item Case 1 (b): We are not in Case 1 (a), and there are at least $\frac T3$ chosen twin groups of Type I (b). 
    \item Case 2: We are not in Case 1 (a) or (b), so there are at least $\frac T3$ chosen twin groups of Type II.
\end{itemize}

In each case, the construction of $G$ is as follows by applying the following modification on $H$ for the $\frac T3$ chosen twin groups $[x]$ of the corresponding type. (From now on, when we say a {\it chosen twin group}, we mean one of these $\frac T3$ chosen ones). For each such twin group $[x]$, we choose an arbitrary representative $x\in [x]$, two vertices $v,u$ with small distance from $[x]$, and modify $H$ as follows: 
    \begin{enumerate}
        \item If we are in Case 1 (a), then $x$ is Type I (a). Let $v$ be a vertex in $N_{\leq 2}(x) \cup [x]$ of degree $i^*$ which has a neighbor $u$ not in $[v]$ with degree $i^*$. If we are in Case 1 (b) but not Case 1 (a), then $x$ is of Type I(b), let $v$ be $x$, and choose $u \in N(x) \setminus [x]$ such that $u$ is not adjacent to any vertex of degree $i^*$ other than vertices in $[x]$. 
 
        In these cases,  after fixing the vertices $v, u$, 
    make $C_d-1$ copies of $[v']$ and $[u']$ for $[v]$, $[u]$ respectively, where $C_d = O(1)$ is a constant only depending on $d$ and the type. We will specify the value of $C_d$ later. 
    
    Specifically, if $[v] = \{v_1,\cdots,v_a\}$ and $[u] = \{u_1,\cdots,u_b\}$ this means:
    
    \begin{itemize}
        \item We add vertices $(v_{j,l})_{2\le j\le C_d,1\le l\le a}$ and $(u_{j,l'})_{2\le j\le C_d,1\le l'\le b}$ into $G$. Here we can think of $v_{1,l}$ and $u_{1,l'}$ as vertices originally in $H$. 
        
        \item We add an edge between $v_{j,l},u_{j,l'}$ for all $1\le j\le C_d, 1\le l\le a, 1\le l'\le b$. (Note $v_{j,l}$ is not adjacent to $ u_{j',l'}$ for all $l,l'$ and $j\ne j'$.) 
        
        \item If $[v]$ is a clique, then $\{ v_{j,1},\cdots, v_{j,a}\}$ is a clique; otherwise $\{ v_{j,1},\cdots, v_{j,a}\}$ is an independent set. Also, if $j\ne j'$, then $v_{j,l}$ is not adjacent to $v_{j',l'}$ for all $l,l'$. We perform the analogous construction on $u$.  
        
       \item If $w\notin [v]\cup [u]$ is a neighbor of $v$, then $w$ is adjacent to $v_{j,l}$ for all $1\le j\le C_d, 1\le l\le a$. Analogous comment for $u$.   
    \end{itemize} 
    
    Here $C_d=2$ if $x$ is of Type I (b). For Type I (a), the value of $C_d$ will be specified later. 
    
        \item If we are in Case 2, i.e. $[x]$ is of Type II, then $x$ is of degree $i^*$, and all of its neighbors not in $[x]$ have degree at least $i^*+1$. Let $v$ be $x$ and let $u$ be its arbitrary neighbor not in $[x]$. Note by the assumption that $x$ is of Type II, $u$ has some neighbor of degree $i$ not in $[x]$. Let $S$ be the set of neighbors of $u$ not in $[x]$ with degree $i^*$. Thus $S \neq \emptyset$. 

        In this case, after fixing the vertices $v, u$, 
    add a copy $[v']$ and $[u']$ for $[v]$,  $[u]$ respectively and add a copy $S'$ for $S$. Add edges so that $[v']$ is adjacent to $[u']$ and $N_1([v])\setminus [u]$. And add edges so that $[u']$ is adjacent to the neighbors of $[u]$ that not in $[u] \cup S\cup [v]$,  and add edges between $[u']$ and every vertex in $S'$. Finally, add edges between each vertex $s' \in S'$ to the set of vertices which are adjacent to the vertex $s \in S$ that it is a copy of except not adding edges between $S'$ and $[u]$. In case 2, we define $C_d:=2$.
    \end{enumerate}

\subsubsection{Notations and Properties of the Construction }
   We now define some notions that will be used frequently in the rest of the proof. 
   
   In case 1 (b) and case 2, call $[v]$ and $[v']$ {\it top groups} and $[u]$ and $[u']$ {\it special neighbor groups} (of $[v]$ and $[v']$ respectively); and call $u$ {\it special neighbor} of $v$, and similarly for $u'$. Call vertices in $[v]$ and $[v']$ \emph{top vertices}. Note that each top vertex has degree at most $i^*$. Call $S, S'$ the {\it special second neighbors} (of $[v]$ and $[v']$ respectively). This admits an obvious generalization to arbitrary $C_d$ in case 1 (a). 
   
    A {\it top block} is defined as follows: if $[x]$ is of Type I, the top block that $[x]$ is in is $G[[v] \cup [v']\cup N_{\leq 2}([v])\cup N_{\leq 2}([v'])]$ (containing $[u]$ and $[u']$); if $[x]$ is of Type II, it is the graph induced by $[v]$, $[v']$, $N_{\leq 3}([v])$, and $N_{\leq 3}[v']$ (containing $[u]$ and $[u']$, $S, S'$). We say the top block constructed above based on $[x]$ are  of {\it Type I (a), (b)}, and {\it Type II} respectively. By construction, each top block corresponds to a chosen twin group, hence the number of top blocks in $G$ is $\frac T3$. 
    
    Note that initially we chose twin groups to have distance at least $11$ from each other; a subtlety is that the top vertices are not necessarily in the twin group, but have distance at most 2 from the twin group, so this guarantees that the top vertices of the twin group are of distance at least $7$ from each other, we can see that the top blocks are disjoint. Furthermore, recall that in every top block, the minimum degree is equal to $i$. 

    \begin{figure}
        \centering
        \includegraphics[scale=0.4]{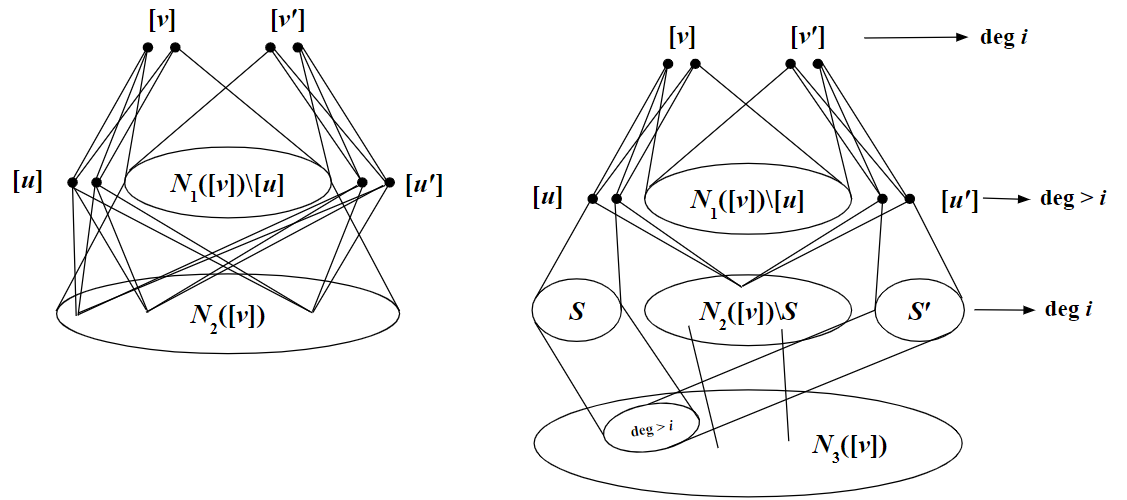}
        \caption{Construction of top blocks of Type I (b) (left) and Type II (right) in $G$. \footnotesize{In Type I (b): $v$ has virtual degree $i$ and $u$ has virtual degree larger than $i$, and $u$ is not adjacent to any vertex of virtual degree $i$ except vertices in $[v]$. In Type II block, every vertex has virtual degree at least $i$; $S$ and $[v]$ are the only neighbors of $[u]$ which are of virtual degree $i$; neighbors of vertices in $S$ are of virtual degree larger than $i$.} }
        \label{fig:topblock}
    \end{figure}

        For any vertex in $G$ that was originally in $H$ we refer to its {\it virtual degree} as its degree in $H$. For any vertices in a top group $[v']$ or a special neighbor group $[u']$ or a special second neighbor $S'$, their virtual degree will be the degree in $H$ of the vertex they are a copy of.
An illustration of the top blocks of Types I (b) and II are shown in Figure \ref{fig:topblock}.

The next claim follows immediately from the construction.

    \begin{claim}\label{claim:deg1} If $w \in G$ is adjacent to a top vertex or a special neighbor or a special second neighbor, and itself is not a top vertex, or a special neighbor, or a special second neighbor, then $w$ has larger degree in $G$ than its virtual degree. Otherwise, $w$ has the same degree in $G$ as its virtual degree.

    \end{claim}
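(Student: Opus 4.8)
The plan is to prove Claim \ref{claim:deg1} by directly auditing which edges of $G$ fail to be edges of $H$. The starting observation is that every vertex of $G$ not already in $H$ is a \emph{copy vertex}: a clone $v_{j,l}$ or $u_{j,l'}$ with $j\ge 2$, or a vertex of some special second neighbor set $S'$; and that every edge of $E(G)\setminus E(H)$ is incident to at least one copy vertex. Both facts are immediate from the construction, since in each of the three block types (Type I (a), Type I (b), Type II) every freshly added edge has an endpoint among the freshly added vertices. Since the copy vertices are exactly the ``extra'' top vertices, the extra special neighbors, and the special second neighbors, and since the original vertices $[v]$, $[u]$, $S$ of a top block are likewise top vertices / special neighbors / special second neighbors, the set $P := \{\text{top vertices}\}\cup\{\text{special neighbors}\}\cup\{\text{special second neighbors}\}$ contains all copy vertices. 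The claim is then equivalent to the pair of statements: (I) if $w\in P$ then $\deg_G(w)$ equals the virtual degree of $w$; and (II) if $w\notin P$ then $\deg_G(w)$ equals the virtual degree of $w$ when $w$ has no neighbor in $P$, and strictly exceeds it otherwise.

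For (I) the point is that the construction clones a group into parallel classes arranged in a matching-like pattern, so members of $P$ never pick up new neighbors. Concretely, a clone $v_{j,l}$ is joined only to $u_{j,l'}$, to the other members of its own parallel class when $[v]$ is a clique, and to the genuine neighbors of $[v]$ lying outside $[v]\cup[u]$ (with the analogous description for $u_{j,l'}$ and for $s'\in S'$); comparing this with $\deg_H$ of the vertex being copied shows the two are equal. For the originals $v=v_{1,l}$, $u=u_{1,l'}$, $s\in S$, the same audit shows their group receives no new incident edge at all---for instance $v_{1,l}$ is joined to $u_{1,l'}$ but never to $u_{j,l'}$ with $j\ge2$, and in a Type II block the copies $S'$ are explicitly not joined back to $[u]$, while $N_1([v])$ is disjoint from $S$ because every neighbor of $[x]$ outside $[x]$ has virtual degree larger than $i^*$ whereas $S$ has virtual degree $i^*$. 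Hence $\deg_G(w)=\deg_H(w)$, the virtual degree.

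For (II), first note that if $w$ lies in no top block then $w$ is incident to no new edge, since the top blocks are pairwise disjoint and the top vertices of distinct chosen groups sit at distance at least $7$; so $\deg_G(w)=\deg_H(w)$ is the virtual degree, and $w$ has no neighbor in $P$. If instead $w\notin P$ has a neighbor $z\in P$, then $w$ lies in the top block of $z$ and, not being one of the cloned vertices, $w$ is a genuine $H$-neighbor of one of the copied groups: $w\in N_1([v])\setminus([v]\cup[u])$, or $w\in N_1([u])\setminus([v]\cup[u]\cup S)$, or $w\in N_H(s)\setminus[u]$ for some $s\in S$. In each case the construction joins $w$ to all $C_d-1\ge 1$ new clones of that group ($v_{j,l}$ for $j\ge2$, or $u_{j,l'}$ for $j\ge2$, or $s'\in S'$), and these are brand-new vertices, hence not $H$-neighbors of $w$; therefore $\deg_G(w)\ge\deg_H(w)+1$, which strictly exceeds the virtual degree of $w$.

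I expect the only genuinely fiddly part to be the bookkeeping in (I) and in the overlap analysis of (II): making sure, when the copied groups $[v]$, $[u]$, $S$ themselves carry internal edges or have intersecting neighborhoods, that the degree counts still balance exactly. The resolution here is not conceptual but relies on systematically invoking the defining properties of the block types (the virtual-degree constraints defining Type I versus Type II, the separation of $S$ from $N_1([x])$, and the twin-group structure), so the main obstacle is careful case management rather than any new idea.
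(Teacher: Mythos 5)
Your proof is correct and takes essentially the same approach the paper intends: the paper offers no argument at all (it states that the claim ``follows immediately from the construction''), and what you have written is precisely the direct edge-audit that justifies that assertion, organized around the observation that every edge of $E(G)\setminus E(H)$ meets a copy vertex and that all copy vertices lie in $P$. The one loose end you defer to ``bookkeeping'' is worth pinning down explicitly for the Type II case: if two vertices $s_1,s_2\in S$ were adjacent, the construction as written would join $s_1$ to the copy $s_2'$ and inflate $\deg_G(s_1)$; one should note that such adjacency forces $s_2\in[s_1]$ by the Type II degree condition and then either rule it out or read the construction as also omitting $S$--$S'$ edges, since otherwise this is a (minor) gap in the paper's claim itself rather than in your argument.
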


        \begin{claim}\label{claim:deg2}
     In $G$, there is no vertex in a  top block of Type II or Type I (b) which is of virtual degree $i^*$ but is of degree at least $i^*+1$ in $G$.
    \end{claim}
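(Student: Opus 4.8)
The plan is to deduce Claim~\ref{claim:deg2} from Claim~\ref{claim:deg1}. By Claim~\ref{claim:deg1}, a vertex $w$ has $\deg_G(w)$ strictly larger than its virtual degree exactly when $w$ is not itself a top vertex, a special neighbor, or a special second neighbor, yet is adjacent in $G$ to one of them; in every other case $\deg_G(w)$ equals the virtual degree of $w$. So I fix a top block of Type~I(b) or Type~II coming from a chosen twin group $[x]$, so that $[v]=[x]$ and, more generally, every vertex of the block has virtual degree at least $i^*$ (the twin group $[x]$ was selected so that $N_{\le 4}([x])$ contains no vertex of degree less than $i^*$, and copies inherit the virtual degree of the vertex they copy). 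Let $w$ be a vertex of this block with virtual degree $i^*$; the goal is $\deg_G(w)=i^*$ (it cannot be smaller since the construction only adds edges).

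If $w$ is special, then $w\in[v]\cup[v']\cup[u]\cup[u']\cup S\cup S'$, where $S,S'$ occur only in Type~II. The special neighbors $u,u'$ have virtual degree $\deg_H(u)>i^*$ — precisely the property that landed $[x]$ in Type~I(b) or Type~II, since the special neighbor is drawn from $N(x)\setminus[x]$, all of whose vertices have degree larger than $i^*$ in these types — so in fact $w\in[v]\cup[v']\cup S\cup S'$, and Claim~\ref{claim:deg1} then gives $\deg_G(w)=i^*$ immediately. The substantive case is when $w$ is \emph{not} special. Then $w$ is an original vertex of $H$ (every copy vertex is a top vertex, a special neighbor, or a special second neighbor), so it suffices to show that $w$ acquires no new edge during the construction, which gives $\deg_G(w)=\deg_H(w)=i^*$.

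For this I would observe that every edge added while processing the top block of $[x]$ is incident to a copy vertex (a vertex of $[v']$, $[u']$, or $S'$). Hence if $w$ were an endpoint of such an edge it would be an original vertex joined to one of those copies, which by the construction forces $w$ into $N_1([v])\setminus[u]$, into $N_H([u])\setminus([u]\cup S\cup[v])$, or into $N_H(s)\setminus[u]$ for some $s\in S$. Each possibility contradicts $\deg_H(w)=i^*$ once the defining property of the type is invoked: every vertex of $N_H(x)\setminus[x]$ has degree larger than $i^*$ (for Type~II this is because $x$ itself has degree $i^*$ and every degree-$i^*$ vertex within distance $2$ of $[x]$ has all of its neighbors of larger degree); the degree-$i^*$ neighbors of $u$ are exactly $S\cup[v]$ (in Type~I(b), $S=\emptyset$ and all degree-$i^*$ neighbors of $u$ lie in $[x]=[v]$, while in Type~II this is the definition of $S$); and any degree-$i^*$ neighbor of $s$ lies in $[s]\subseteq S$, hence is special. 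Finally, $w$ cannot acquire an edge while a \emph{different} top block is processed either: such an edge is incident to a copy vertex of that other block, whose original neighbors all lie within distance $3$ of that block's twin group, whereas $w$ lies within distance $3$ of $[x]$ and the chosen twin groups are pairwise at distance at least $11$ in $H$ (so the top blocks are disjoint and, in particular, no such edge reaches $w$).

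The main obstacle I expect is bookkeeping rather than conceptual: one must treat Types~I(b) and~II separately and, within each, check the three families of newly added edges, each time citing the right defining property of the type and keeping straight which of $[v],[v'],[u],[u'],S,S'$ are special and what their virtual degrees are. Once Claim~\ref{claim:deg1} is in hand there is nothing deeper going on — the content is exactly that Types~I(b) and~II were designed so that doubling the special neighbor (and, in Type~II, the special second neighbors) never raises the $G$-degree of any degree-$i^*$ vertex that is not itself being doubled.
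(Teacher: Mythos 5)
Your proof is correct and follows essentially the same route as the paper: reduce via Claim~\ref{claim:deg1} to the case of a non-special vertex adjacent to a special one, then use the defining properties of Types I(b) and II to show any degree-$i^*$ neighbor of a top vertex, special neighbor, or special second neighbor must itself lie in $[v]\cup[v']\cup S\cup S'$. Your version is somewhat more explicit than the paper's (enumerating the three families of added edges and invoking the distance-$11$ separation of chosen twin groups), but the underlying argument is the same.
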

    \begin{proof}
        In a top block of Type II, the top vertices form a twin group with vertices of degree $i^*$. Define $v,v',u,u',S,S'$ as in the setup. By Claim \ref{claim:deg1}, the only vertices $w$ of virtual degree $i^*$ but a larger degree in $G$ must be adjacent to one of $v,v',u,u'$ or some vertex in $S$ or $S'$. Note that $v,v'$ or vertices in $S$ or $S'$ have degree $i^*$, so if $w$ is adjacent to one of these, say $v$, then by definition of Type II, $w$ must be in $[v]$, so it is a copy of some vertex in $H$ and its virtual degree is equal to its degree in $G$. Otherwise, $w$ is adjacent to $u$ or $u'$. Since $w$ has virtual degree $i^*$, from the definitions we can see that $w\in S \cup S' \cup[v] \cup [v']$ by the definition of $S$. But every vertex in this set has degree $i^*$ in $G$ by Claim \ref{claim:deg1}, 
        a contradiction.

        In a top block of Type I (b), let $v$ be a top vertex. Then all vertices in $N_{G}(v) \setminus [v]$ must have virtual degree $>i^*$. The same holds for $v'$. If $u$ is a top neighbor, then all vertices in $N_{G}(u) \setminus [u]$ must have virtual degree $>i^*$. Thus, there are no vertices in a top block of Type I (b) with virtual degree $i^*$ and degree $>i^*$ in $G$. 
    \end{proof}

    Recall $\frac T3 = \frac{N}{6d^{20d-20i^*+31}}$ is the number of chosen twin groups, which is equal to the number of top blocks in $G$. We now create $(C_d)^{T/3}$ blow-ups (not necessarily maximal) of $H$ in $G$, call them {\it naive blow-ups}  (Recall $C_d=2$ unless we are in Type I (a), in which case we have not yet specified the value of $C_d$). In each top block, we choose everything in the block that is not $[v],[u],[v'],[u']$ and we are then also choosing either $[v]$ and $[u]$ (and $S$ if the block of Type II), or choosing $[v']$ and $[u']$ (and $S'$ if the block of Type II). In the case of Type I (a), it's the same idea, just a more complicated expression. If $[v] = \{v_1,\cdots,v_a\}$ correspond to vertices $(v_{j,l})_{1\le j\le C_d, 1\le l\le a}$ in the top block and $[u] = \{u_1,\cdots,u_b\}$ correspond to vertices $(u_{j,l'})_{1\le j\le C_d, 1\le l'\le b}$ in the top block, then we choose one $j\in C_d$ and include $v_{j,l}, u_{j,l'}$ for all $l$ and $l'$. In all cases, we also keep all the vertices from $H$ that are not in a top group, special neighbor group, or a special second neighbor. This gives us $C_d^{T/3}$ naive blow-ups; note that each naive blow-up is an induced copy of $H$. 

  Throughout the rest of the proof, if $V(B) \subset V(G)$ is a blow-up, then we will use \[\phi: V(B) \to V(H)\] to denote the vertex partition of a blow-up $B$. Two vertices in $w,w'\in V(B)$ satisfy $\phi(w)=\phi(w')$ if and only if they have been grouped together in $B$ to correspond to the same vertex in $H$. 

Note that when we say vertices of degree/virtual degree/twin group $i^*$ in $V(B) \subseteq V(G)$, we are referring to the degree/virtual degree/twin group in $G$.

\subsubsection{Type I (b) and Type II} 

We have already created $2^{T/3}$ naive blow-ups, which are not necessarily maximal. The idea is to analyze twin groups of virtual degree $i^*$ in $G$.

\begin{claim}\label{claim:degreedelta}
Let $w,t\in V(B)$ such that the virtual degree of $w$ and $t$ are both equal to $i^*$, and $[\phi(w)]_H=[\phi(t)]_H$ satisfy $\deg_H(\phi(w))=i^*$. Then $[w]_{G} = [t]_{G}$.

\end{claim}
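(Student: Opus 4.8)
\textbf{Proof plan for Claim \ref{claim:degreedelta}.}
The plan is to show that any two vertices $w,t$ in a blow-up $B$ whose virtual degree is $i^*$ and whose images lie in the same $H$-twin group of degree $i^*$ must in fact be twins in $G$. First I would translate the two hypotheses into concrete structure: since $B$ is a blow-up with partition $\phi$, every vertex of $B$ mapped to a vertex $p\in V(H)$ is complete to the set $\bigcup_{q\sim_H p}\phi^{-1}(q)$, and the vertices of $B$ mapped to $[\phi(w)]_H=[\phi(t)]_H$ are each adjacent, inside $B$, to exactly the same collection of other groups (because twins in $H$ share neighborhoods). So within $B$, the neighborhoods $N_B(w)$ and $N_B(t)$ agree up to the groups $\phi^{-1}([\phi(w)]_H)$ themselves; if these groups are cliques then $w\sim t$ and if independent sets then $w\not\sim t$, but in either case $N_B(w)\setminus\{t\} = N_B(t)\setminus\{w\}$. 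The core of the argument is then to promote this equality of neighborhoods inside $B$ to equality of neighborhoods inside the whole host graph $G$.

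The key step is the following: I would argue that, because $w$ and $t$ have virtual degree exactly $i^*$ and $[\phi(w)]_H$ has degree $i^*$ in $H$, the set $N_B(w)$ already accounts for \emph{all} of the $G$-neighbors of $w$. Concretely, $w$ sits in $B$ with $\deg_B(w)=i^*$ (it is adjacent to one vertex of each of the $i^*$ groups indexed by $N_H(\phi(w))$, and these groups are singletons in the naive blow-up structure, or more generally the count works out because $\phi(w)$ is a degree-$i^*$ vertex of $H$). On the other hand, by Claim \ref{claim:deg2}, no vertex of virtual degree $i^*$ lying in a Type I (b) or Type II top block has degree strictly larger than $i^*$ in $G$; and a vertex of virtual degree $i^*$ \emph{not} in any top block has $G$-degree equal to its virtual degree $i^*$ unless it is adjacent to a top/special/special-second-neighbor vertex, which—by the careful choice of distances ($\ge 11$ between chosen twin groups, so $\ge 7$ between top vertices) and the definition of Type I (b)/Type II blocks—cannot happen for a vertex whose whole virtual neighborhood consists of degree-$i^*$ images. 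Either way $\deg_G(w)=i^*=\deg_B(w)$, so $N_G(w)=N_B(w)$, and likewise $N_G(t)=N_B(t)$. Combining with $N_B(w)\setminus\{t\}=N_B(t)\setminus\{w\}$ gives $N_G(w)\setminus\{t\}=N_G(t)\setminus\{w\}$, i.e. $[w]_G=[t]_G$.

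I expect the main obstacle to be the bookkeeping in the middle step: carefully justifying that $\deg_G(w)=i^*$ for \emph{every} vertex $w\in V(B)$ of virtual degree $i^*$ whose image has $H$-degree $i^*$, regardless of which of the three block types $w$ happens to live in (or whether it lives outside all blocks). This requires splitting into cases according to the location of $w$ in $G$ and invoking Claim \ref{claim:deg1} (virtual degree versus actual degree), Claim \ref{claim:deg2} (no degree inflation inside Type I (b)/Type II blocks), and the distance-$\ge 11$ separation of chosen twin groups to rule out a degree-$i^*$ vertex with a degree-$i^*$ virtual neighborhood being adjacent to a newly added top/special vertex. Once $\deg_G=\deg_B$ is established for both $w$ and $t$, the equality of $G$-neighborhoods is immediate from the blow-up/twin structure in $B$, and the conclusion $[w]_G=[t]_G$ follows.
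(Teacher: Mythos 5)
Your proposal follows essentially the same route as the paper: pin down $\deg_{G}(w)=\deg_{G}(t)=i^*$ via Claim \ref{claim:deg2} (together with Claim \ref{claim:deg1} and the fact that any vertex adjacent to a newly added vertex already lies inside the corresponding top block), then use blow-up completeness to identify $N_G(w)$ and $N_G(t)$ with the union of the groups indexed by $N_H(\phi(w))$ and conclude twinhood. One small caveat on ordering: the identity $N_B(w)\setminus\{t\}=N_B(t)\setminus\{w\}$ does not follow from the $H$-twin structure alone, since the blow-up is not induced and the groups $\phi^{-1}(\cdot)$ need not be singletons (so, e.g., a second vertex of $\phi^{-1}(\phi(w))$ could be adjacent to $t$ but not to $w$); it only becomes valid after the degree count forces each group adjacent to $\phi(w)$ to be a singleton and excludes any extra edges, which is precisely the order in which the paper runs the argument.
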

\begin{proof}

By Claim \ref{claim:deg2}, since $w$ has virtual degree $i^*$, $\deg_{G}(w) = \deg_{G}(t) = i^*$. 

 Let $w,t$ be given as in the claim. Let $b_1,\cdots,b_{i^*}$ be pairwise distinct vertices such that $  N_H(\phi(w)) = \{ b_1,\cdots,b_{i^*}\}, $ and let $a_1,\cdots,a_{i^*}$ be the neighbors of $w$ in $G$. Note that $\phi(a_1),\cdots,\phi(a_{i^*})$ is necessarily a bijection of $b_1,\cdots,b_{i^*}$. 

 Case 1: $[\phi(t)]_H$ is an independent set. Then $N_H(\phi(t)) = N_H(\phi(w)) = \{b_1,\cdots,b_{i^*}\}$. Since $[\phi(t)]_H = [\phi(w)]_H$, $t$ must be adjacent to all of $a_1,\cdots,a_{i^*}$. Since $t$ has degree $i$, $N_{G}(t) = \{a_1,\cdots,a_{i^*}\} = N_{G}(w)$, which implies that $[t]=[w]$.

Case 2: $[\phi(t)]_H$ is a clique. In this case, $N_H(\phi(t))$ is not equal to $N_H(\phi(w))$ unless $\phi(t) = \phi(w)$ as vertices. If $\phi(t)=\phi(w)$, then again $N_H(\phi(t))=N_H(\phi(w))$ and we finish as in Case 1. Otherwise, we define $\phi(w) = b_0, \phi(t) = b_j$ for some $j\ne 0$, so $N_H(b_0) = \{b_1,\cdots,b_{i^*}\}$ and $N_H(b_j) = \{b_0,\cdots,b_{i^*}\} \setminus \{b_j\}$. We relabel the $a_j$'s so that $\phi(a_j)=b_j$ for all $j=1,\cdots,i^*$. Since $t$ is a neighbor of $w$, $t=a_j$. This means that the neighbors of $t$ are $w,a_1,\cdots,a_{j-1},a_{j+1},\cdots,a_k$, so in particular, $[t]_{G} = [w]_{G}$.
\end{proof}

  We now make a simple observation.
\begin{claim}\label{claim:smallphi}
    For any vertex $z \in V(H)$, the pre-image $\phi^{-1}(z)$ in $G$ has size at most $2d$.
\end{claim}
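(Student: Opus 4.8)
\textbf{Proof proposal for Claim \ref{claim:smallphi}.}

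The plan is to bound $|\phi^{-1}(z)|$ by combining the structural constraints inherited from $H$ with the $c$-closure of $G$, where here $c$ is the (constant, depending only on $d$) closure value of the constructed host graph. First I would observe that since $B$ is a blow-up of $H$, the set $\phi^{-1}(z)$ is exactly one group $A_z$ of the blow-up, so the internal structure of $\phi^{-1}(z)$ is governed by whether $z$ is in a twin group that is a clique or an independent set in $H$, and its interaction with the rest of $B$ is governed by $N_H(z)$.

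The key step is a case split on the neighborhood of $z$ in $H$. If $N_H(z)$ contains a non-edge, say vertices $p,q$ with $p\not\sim_H q$, then pick any $w\in \phi^{-1}(p)$ and $t\in \phi^{-1}(q)$; these are non-adjacent in $G$ (since $B$ is an \emph{induced} blow-up — note all our blow-ups of $H$ in these constructions are induced copies) and every vertex of $\phi^{-1}(z)$ lies in their common neighborhood, so $|\phi^{-1}(z)| \le |N_G(w)\cap N_G(t)| < c$, which is bounded by a constant depending only on $d$, hence certainly at most $2d$ once we track the (crude) closure bound for $G$; alternatively, since these blow-ups are induced copies of $H$ — recall the naive blow-ups are induced copies of $H$ — each $\phi^{-1}(z)$ has size $1$ in those and the re-grouping argument only enlarges groups under controlled circumstances. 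The remaining case is when $N_H(z)$ is a clique in $H$, i.e.\ $z$ is a bad vertex of $H$; then $z$ lies in $[v]\cup N_{\le 2}([v])$-type structure or more generally has degree at most $i^*$ by our choice of $i^*$ and the fact that bad vertices of degree $<i^*$ were excluded from the $4$-neighborhoods of chosen twin groups. In this case I would argue directly: every vertex of $\phi^{-1}(z)$ is adjacent in $G$ to some fixed vertex $w'\in \phi^{-1}(z')$ for $z'\in N_H(z)$ and to nothing in $\phi^{-1}(z'')$ for $z''\not\sim_H z$; since $\deg_H(z)\le d$ and the blown-up groups corresponding to neighbors of $z$ are themselves small, the set of candidate vertices for $\phi^{-1}(z)$ is confined to a single twin group of $G$ together with its copies, and each twin group of $G$ has size at most $d$ while at most $C_d=O(1)$ copies were made — but more carefully, within one top block the vertices of virtual degree $i^*$ that could be grouped with $z$ form, by Claims \ref{claim:degreedelta} and \ref{claim:deg2}, a single twin group of $G$, of size at most $2d$.

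The main obstacle I anticipate is the bad-vertex case: unlike clique-groups, a blown-up group corresponding to a bad vertex need not be pinned down by a single non-edge in its neighborhood, so I cannot appeal to $c$-closure directly and must instead use the fine structure of the construction — specifically that within each top block the relevant twin groups have size $\le d$ and only a bounded number of copies exist, and that distinct top blocks are at distance $\ge 7$ so a group cannot spread across two of them. I would therefore finish by checking, block-type by block-type (Type I(a), I(b), II), that the candidate set for any $\phi^{-1}(z)$ with $z$ bad is contained in one twin group of $G$ (size $\le d$) possibly together with one matched copy (another factor of $2$), yielding the bound $2d$; and for $z$ not bad the induced-blow-up observation gives $|\phi^{-1}(z)|<c\le 2d$ for the (constant) closure value of $G$. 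This covers all vertices $z\in V(H)$ and completes the claim.
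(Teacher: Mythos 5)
Your proposal misses the one-line argument the paper uses, and the machinery you build in its place has real gaps. The paper's proof is: since $H$ is connected (hence has no isolated vertices), $z$ has a neighbor $z'$ in $H$; fix any $w'\in\phi^{-1}(z')$ (nonempty because $B$ is a blow-up); every vertex of $\phi^{-1}(z)$ is adjacent to $w'$ in $G$, so $\phi^{-1}(z)\subseteq N_G(w')$ and $|\phi^{-1}(z)|\le\Delta(G)\le 2d$ (in the Type I(b)/II cases the construction uses $C_d=2$, so each degree at most doubles). You actually write down the key observation --- ``every vertex of $\phi^{-1}(z)$ is adjacent in $G$ to some fixed vertex $w'\in\phi^{-1}(z')$'' --- but instead of concluding immediately from the degree bound, you bury it under a case analysis on twin groups and top blocks.

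The gaps in what you do write: (1) In the non-bad case you assume the blow-up $B$ is \emph{induced}, so that $\phi^{-1}(p)$ and $\phi^{-1}(q)$ are non-adjacent whenever $p\not\sim_H q$. But Theorem \ref{thm: bounded} concerns non-induced blow-ups; a maximal blow-up containing a naive blow-up need not be induced, so the pair $(w,t)$ you pick may be adjacent and the $c$-closure argument does not apply. (The naive blow-ups are induced copies of $H$, but Claim \ref{claim:smallphi} must hold for an arbitrary blow-up $B$, as it is invoked in Claim \ref{claim:smallnum}.) (2) Even granting that step, you never establish that the closure value of $G$ is at most $2d$; the paper nowhere computes a closure constant for this construction, and ``bounded by a constant depending only on $d$'' does not give the stated bound $2d$. (3) The bad-vertex case rests on unproved assertions that the candidate set is confined to a single twin group of $G$ of size at most $d$ plus one copy; with the duplicated groups this would need its own argument. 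All of this is unnecessary: no case split on whether $z$ is bad, and no appeal to $c$-closure, is needed.
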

\begin{proof}
  Since $H$ has no isolated vertices, $\phi^{-1}(z)$ is contained in a neighborhood of some vertex in $G$. Since $G$ has maximum degree at most $2d$, $|\phi^{-1}(z)| \leq 2d$.
\end{proof}

The key claim now is that any blow-up must contain relatively few twin groups of degree $i^*$ (which is equivalent to virtual degree $i^*$). Recall $n_{i^*}$ is the number of twin groups of degree $i^*$ in $H$, and when we say degree we mean degree in $G$. 

\begin{claim}\label{claim:smallnum}
    There are at most $n_{i^*} + 2d \cdot \frac{2N}{d^{20(d-i^*+2)-1}}$ twin groups of virtual degree $i^*$ in $G$ with at least one vertex in $B$.
\end{claim}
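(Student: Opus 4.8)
The plan is to classify the twin groups in question by looking at where a single representative vertex goes under the blow-up partition, and to feed this through the degree bookkeeping already set up in Claims~\ref{claim:deg1} and~\ref{claim:deg2}, together with Claims~\ref{claim:degreedelta} and~\ref{claim:smallphi}.

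First I would record the degree fact that makes everything go. In the present subsection every top block is of Type~I~(b) or of Type~II, and by the definition of a top block (it contains the $2$- or $3$-neighbourhood of its top groups) every $G$-neighbour of a top vertex, of a special neighbour, or of a special second neighbour again lies in that same top block. Hence any vertex of $G$ lying outside all top blocks is adjacent to none of these distinguished vertices, so by Claim~\ref{claim:deg1} its $G$-degree equals its virtual degree; for vertices inside top blocks the corresponding statement at virtual degree $i^*$ is exactly Claim~\ref{claim:deg2}. The upshot is: if $w \in V(G)$ has virtual degree $i^*$, then $\deg_G(w) = i^*$.

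Next, fix any blow-up $B$ with partition $\phi \colon V(B) \to V(H)$, and consider a twin group $Q$ of $G$ of virtual degree $i^*$ with $Q \cap V(B) \ne \emptyset$; fix once and for all a representative $w \in Q \cap V(B)$ and set $z := \phi(w)$. Since $B$ is a blow-up, for each $y \in N_H(z)$ the nonempty part $\phi^{-1}(y)$ is complete to $\phi^{-1}(z) \ni w$, and these parts are pairwise disjoint, so $w$ already has at least $\deg_H(z)$ neighbours inside $V(B)$; combined with the previous paragraph this gives $\deg_H(z) \le \deg_G(w) = i^*$. I would then split into two cases according to $\deg_H(z)$. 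If $\deg_H(z) = i^*$, Claim~\ref{claim:degreedelta} implies that two such groups whose representatives lie in the same $H$-twin group must coincide, so the assignment $Q \mapsto [z]_H$ is injective into the set of degree-$i^*$ twin groups of $H$, of which there are $n_{i^*}$; this case therefore contributes at most $n_{i^*}$. If $\deg_H(z) < i^*$, then $z$ is one of the at most $\tfrac{2N}{d^{20(d-i^*+2)-1}}$ vertices of $H$ of degree smaller than $i^*$ that were counted in the construction, and by Claim~\ref{claim:smallphi} each such $z$ has $|\phi^{-1}(z)| \le 2d$, hence can serve as $z$ for at most $2d$ representatives $w$ and thus for at most $2d$ groups $Q$; this case contributes at most $2d \cdot \tfrac{2N}{d^{20(d-i^*+2)-1}}$. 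Summing the two bounds gives exactly the claimed estimate.

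The one step that needs genuine care is the degree fact in the second paragraph — one must be sure that a vertex of virtual degree $i^*$ does not acquire extra neighbours when $H$ is modified into $G$ — but this is precisely what Claim~\ref{claim:deg2} guarantees inside top blocks, while the locality of the construction handles everything outside, so no new argument is required. The remaining steps are routine bookkeeping with the blow-up structure; I would only flag that the inequality $\deg_H(\phi(w)) \le \deg_G(w)$ uses nothing more than completeness of adjacent parts of a blow-up, so the whole argument applies equally to non-induced blow-ups.
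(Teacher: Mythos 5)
Your proof is correct and follows essentially the same route as the paper's: split the twin groups according to whether the image $\phi(w)$ of a representative has $H$-degree equal to $i^*$ (handled by Claim~\ref{claim:degreedelta}, giving at most $n_{i^*}$ groups) or strictly smaller (handled by Claim~\ref{claim:smallphi}, giving at most $2d\cdot\frac{2N}{d^{20(d-i^*+2)-1}}$ groups). Your explicit justification that $\deg_H(\phi(w))\le \deg_G(w)=i^*$, via completeness between adjacent parts together with Claims~\ref{claim:deg1} and~\ref{claim:deg2}, is a welcome addition of a step the paper leaves implicit.
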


\begin{proof}
    Let $v$ be a vertex of virtual degree $i^*$ in $B$. Then either $\deg_H(\phi(v)) = i^*$ or $\deg_H(\phi(v)) < i^*$. We say a twin group $[x]$ in $G$ with virtual degree $i^*$ and intersect nontrivially with $B$ is \emph{ok} if there exists $z\in [x]_{G}\cap B$ such that $\deg_H(\phi(z))=i^*$; otherwise, every $z\in [x]_{G}\cap B$ must satisfy $\deg_H(\phi(z))<i^*$, and we say the twin group is {\it not ok}.  

    Cases with $\deg_H(\phi(v)) = i^*$: by claim \ref{claim:degreedelta}, if $[x]_H$ is a twin group of degree $i^*$, then the vertices of virtual degree $i^*$ in $\phi^{-1}([x]_H) $ must be in the same twin group in $G$. Thus, there exist at most $n_i$ ok twin groups.

    Cases with $\deg_H(\phi(v)) < i^*$: by claim \ref{claim:smallphi}, the fibers of $\phi$ have size at most $2d$. Thus, $|\phi^{-1}(\{ y\in H \colon \deg_H(y)<i^*\})| \le 2d |\{ y\in H \colon \deg_H(y)<i^*\}| \le 2d\frac{2N}{d^{20(d-i^*+2)-1}}$. Thus there exists at most $2d\frac{2N}{d^{20(d-i^*+2)-1}}$ not ok twin groups. 
    
    Adding the two cases yields the desired result. 
\end{proof}

Recall \[U:=\frac T3= \frac{N}{6d^{20d-20i^*+31}}\] is the number of twin groups that we chose, i.e. the number of top blocks. Consider maximal blow-ups containing at least one naive blow-up. Such a maximal blow-up must contain all twin groups that are not top vertices or special neighbors or special second neighbors. Let $X$ be the number of top groups or special second neighbor groups contained in a fixed copy of $H$ (in $G$) with degree $i^*$.  There are exactly $n_{i^*} - X$ twin groups of degree $i^*$ in $G$ that are not top groups or special second neighbor groups. Thus, by claim \ref{claim:smallnum}, $B$ contains at most $X + \frac{4Nd^2}{d^{20(d-i^*+2)}}$ top groups or twin groups of special second neighbors of degree $i^*$. Observe the naive blow-ups all differ in the set of twin groups of virtual degree $i^*$ that arise from the top vertices. For the two copies of each top group or each special second neighbor group, every naive blow-up contains exactly one of them. Thus, the blow-up needs to contain at least one copy of every top group or each special second neighbor group, and there are at most $\frac{4Nd^2}{d^{20(d-i+2)}} = 24Ud^{-7}$ top groups or special second neighbor groups such that the blow-up contains both copies. Thus, the number of naive blow-up it contains is at most \[ 2^{24Ud^{-7}} = 2^{\frac{4Nd^2}{d^{20(d-i+2)}}}.\]

We construct an arbitrary map 
\[f \colon \{ \text{Vertex sets that form naive blow-ups} \} \to  \{ \text{Vertex sets that form maximal blow-ups} \}\]
such that $f(B) \supseteq B$ for all $B$. We just showed the fibers of this map has at most $2^{24Ud^{-7}}$ elements, so the image of this map has at least $2^{U}/2^{24Ud^{-7}} = 2^{U(1-24d^{-7})}$ elements. Thus, the number of maximal blow-ups is at least $$2^{U(1-24d^{-7})} \ge 2^{U/2} = 2^{N/(12d^{20d-20i+31})} \ge 2^{v(G)/(d^{20d+20})}. $$

\subsubsection{Type I (a).}

In this case, recall in the construction of $G$ there are $C_d$ copies for $\frac T3$ top groups of Type I (a) with top degree $i^*$ and vertices these top groups have distance at least $5$ away from any vertex of degree less than $i^*$.

The key is that we have a bound on the number of vertices of virtual degree $i^*$ in the blow-up that does not depend on the choice of $C_d$. This is very useful because we have $C_d^{T/3}$ naive blow-ups, each of which are an induced copy of $H$, and this allows up to show that any blow-up cannot contain that many naive blow-ups.

Let $A$ denote a blow-up of $H$ contained in $G$ that contains at least one naive blow-up. We divide the vertices of virtual degree $i^*$ in $A$ into three cases:

\begin{itemize}
    \item Case I: $\deg_H(\phi(x)) < i^*$
    \item Case II: $\deg_H(\phi(x)) = i^*$
    \item Case III: $\deg_H(\phi(x)) > i^*$
\end{itemize}

The first case, where $\deg_H(\phi(v))<i^*$, is the most involved. We first handle Case I. The crux is the following claim.

\begin{claim}\label{claim:topdeg}
    There is no vertex $w\in A$ that is a copy of some vertex $w'\in V(H)$ in the construction of $G$ such that $\deg_H(\phi(w)) < i^*$. 
\end{claim}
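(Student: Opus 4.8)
The plan is to argue by contradiction. Suppose $w\in A$ is a copy, in the construction of $G$, of some vertex $w'\in V(H)$ lying in a top group or a special-neighbor group of the top block built from a chosen twin group $[x]$, and suppose $y:=\phi(w)$ satisfies $\deg_H(y)=i'<i^*$. Since in Type I(a) we only duplicate vertices of degree $i^*$, the virtual degree of $w$ is $i^*$; as $w$ is itself a top vertex or a special neighbor, Claim \ref{claim:deg1} gives $\deg_G(w)=i^*$, and by construction $G[N_G(w)]$ is canonically isomorphic to $H[N_H(w')]$. Inside $N_G(w)$ the only ``new'' vertices are the copies lying in $w$'s own top block; all remaining neighbors of $w$ are genuine $H$-vertices, namely the external $H$-neighbors of $w'$ that were not duplicated, a set I will call $S_{w'}\subseteq N_H(w')$, which satisfies $|S_{w'}|<i^*$ because at least one duplicated neighbor twin group (e.g.\ $[u]$ for $w'\in[v]$, or $[v]$ for $w'\in[u]$) is excluded.

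Next I would locate the neighbor-parts of $y$. Because $A$ is a blow-up of $H$ with $w\in A_y$, the vertex $w$ is complete in $A$ to $P:=\bigcup_{z\sim_H y}A_z$; since each $A_z$ is nonempty and $\deg_G(w)=i^*$, the set $P$ must consist of exactly $\deg_H(y)=i'$ nonempty parts, all contained in $N_G(w)$. I then bring in the naive blow-up $B_0\subseteq A$ that $A$ is assumed to contain, which is an induced copy of $H$ in $G$. The core step is to show that the vertices realizing the parts in $P$ must all be genuine $H$-vertices of $S_{w'}$, not copies: copies in a layer different from $w$'s are not adjacent to $w$ at all and so cannot lie in $P$, while the copies in $w$'s own layer together with $S_{w'}$ exhaust $N_G(w)$; using $B_0$ as a reference frame together with the fact that distinct top blocks are pairwise at distance $\ge 11$ (so no part contained in $N_G(w)$ can straddle two blocks), one transfers adjacency information back to $H$ and concludes $N_H(y)\subseteq N_H(w')$. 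Since $y\ne w'$ and $y\not\sim w'$ (as $w'\notin N_H(w')$), and $y$ is not isolated ($H$ being connected on more than $d^{d^{d^d}}$ vertices), $y$ and $w'$ share a common neighbor, so $\mathrm{dist}_H(y,w')=2$.

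Finally, $w'$ is a top-group vertex or a neighbor of one, and the construction guarantees that every top group of Type I(a) is at distance at least $5$ from any vertex of $H$ of degree less than $i^*$; hence $\mathrm{dist}_H(y,w')\ge 4$, contradicting $\mathrm{dist}_H(y,w')=2$. The main obstacle — and the real content of the claim — is the core step of the second paragraph: a priori the partition $\phi$ of the larger blow-up $A$ need not restrict to the canonical partition of $B_0\cong H$, so the parts $A_z$ for $z\sim_H y$ could be populated by regrouped $B_0$-vertices or by copies from non-chosen layers in other top blocks, and one cannot simply read off $N_H(y)\subseteq N_H(w')$. Overcoming this requires combining (i) the distance-$\ge 11$ separation of the chosen twin groups, so that any single part lying inside $N_G(w)$ meets the copies of at most one top block; (ii) the degree bookkeeping $\sum_{z\sim_H y}|A_z|\le i^*$ weighed against $|S_{w'}|<i^*$, which rules out neighbor-parts that use surplus vertices; and (iii) the local isomorphism $G[N_G(w)]\cong H[N_H(w')]$ to carry adjacency information from $G$ back into $H$. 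Once these configurations are excluded, the distance computation closes the argument.
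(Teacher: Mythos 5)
Your proposal takes a genuinely different route from the paper, which defines $\tilde\phi:=\pi^{-1}\circ\phi$ on the set of vertices assigned to low-degree positions and iterates it until its image stabilizes; you instead attempt a one-step local argument at $w$. Unfortunately the local argument has a gap I do not see how to close. First, your ``core step'' is only described, not proved: you must exclude the copies in $w$'s own layer (the vertices $u_{j,l'}$, and the $v_{j,l''}$ when $[v]$ is a clique) from lying in the parts $A_z$ with $z\sim_H y$, and none of your three ingredients does this --- the distance separation of top blocks and the count $\sum_{z\sim_H y}|A_z|\le i^*$ are both consistent with, say, $A_z=\{u_{j,l'}\}$. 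Second, and more fundamentally, even if every such part were a singleton genuine vertex $g_z\in S_{w'}$, this only yields an injection $z\mapsto g_z$ from $N_H(y)$ into $N_H(w')$, not the set containment $N_H(y)\subseteq N_H(w')$: the label $z$ of a part and the physical vertex populating it are different objects, and nothing forces $g_z=z$. (Forcing $\phi(g)=g$ for genuine vertices is exactly the regrouping problem you flag yourself, and the local isomorphism $G[N_G(w)]\cong H[N_H(w')]$ does not resolve it.) Without the set containment, $y$ and $w'$ need not share a common neighbor in $H$, so you get no bound $\mathrm{dist}_H(y,w')\le 2$ and no contradiction.

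The underlying reason a purely local argument at $w$ is unlikely to succeed is that the obstruction is recursive: to locate $y=\phi(w)$ relative to $w'$ you must track where the physical vertex $y$ (which does lie in $A$, since $A$ contains a naive blow-up and hence all low-degree vertices) is itself sent by $\phi$; that image may again be a low-degree position, and so on down a chain. The paper handles precisely this by iterating $\tilde\phi$, using that $\mathrm{im}(\tilde\phi^t)$ eventually stabilizes and that the observation $\mathrm{dist}_G(a,b)\le\max\{2,\mathrm{dist}_H(\phi(a),\phi(b))\}$ propagates a distance bound back down the chain to the copy itself. A smaller point: your final step asserts $\mathrm{dist}_H(y,w')\ge 4$, but the construction only guarantees that the \emph{chosen twin group} $[x]$ is at distance at least $5$ from vertices of degree below $i^*$, while $w'$ may sit at distance up to $3$ from $x$; so the separation you can actually invoke for a special neighbor is much weaker, and any repaired version of your argument would need to be careful about which distance lower bound is really available.
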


\begin{proof}
    Let $\pi \colon V(G)\to V(H)$ be the canonical projection map; since every vertex in $G$ is either originally in $H$ or copy of some vertex in $H$, $\pi(y)$ just denote which vertex in $H$ it is a copy of. Let $X := \{ x\in V(G)\colon \deg_{G}(x)<i^*\}$. Note these vertices have degree equal to virtual degree. Note that for $x\in X$, $\pi^{-1}(\{x\})$ has size $1$, so it makes sense to view $\pi^{-1}(x)$ as an element of $G$. Let $Y := \{ y\in A \colon \deg_H(\phi(y))<i^*\}$. Note that $Y \supseteq X$, since  we assume that the blow-up contains at least one naive blow-up, and every naive blow-up already contains all of $X$. 
    
    We define a map \[\tilde{\phi} \colon Y \to X, \tilde{\phi} = \pi^{-1} \circ \phi.
    \]

    We have a few basic observations:

    Note for $x,y\in Y$, $\tilde{\phi}(x)= \tilde{\phi}(y) \iff \phi(x)=\phi(y)$, so $x,y$ must have a common neighbor, so $\text{dist}(x,y) \le 2$. If $\phi(x)\ne \phi(y)$, then $\text{dist}(x,y) \le \text{dist}(\phi(x),\phi(y))$ because if $\text{dist}(\phi(x),\phi(y))=D$, then there is a path $\phi(x),a_1,\cdots,a_{D-1}, \phi(y)$ in $H$. There exists $y_1,\cdots,y_{D-1} \in V(G)$ such that $\phi(y_{i^*}) = a_{i^*}$ for all $i^*\in \{1,\cdots,D-1\}$. Clearly, $x,y_1,\cdots,y_{D-1},y$ is a path of length $D$ in $G$. Now observe that if $a,b\in V(H)$ with $\deg(a),\deg(b)<i$, then $\text{dist}(a,b) = \text{dist}(\pi^{-1}(a),\pi^{-1}(b))$.
    
    Here is the main step: there must exists some $t\ge 1$ such that $im(\tilde{\phi}^t) = im(\tilde{\phi}^{t+1})$, where $\tilde{\phi}^t := \underbrace{\tilde{\phi} \circ \cdots \circ \tilde{\phi}}_{t \text{ times }}$. Consequently, $im(\tilde{\phi}^t) = im(\tilde{\phi}^{t+1}) = \cdots = im(\tilde{\phi}^{2t})$.

    Now, suppose $y$ is a copy of some vertex in $H$, and $\deg_H(\phi(y)) <i^*$. Then  $\deg_{G}(\tilde{\phi}(y)) = \deg_H(\phi(y)).$  Since $im(\tilde{\phi}^{t})=im(\tilde{\phi}^{2t})$, there exists $z\in X$ such that $\tilde{\phi}^{t}(y) = \tilde{\phi}^t(\tilde{\phi}^t(z)).$

    We consider the smallest $k> 0$ such that $\tilde{\phi}^k(y) = \tilde{\phi}^k(\tilde{\phi}^t(z)).$ Such a $k$ must exist because $k= t$ makes the equality hold, but when $k=0$, the equality does not hold.

    Observe that $\tilde{\phi}^{k-1}(y) \ne \tilde{\phi}^{k-1+t}(z)$, but by ``basic observations", $\text{dist}(\tilde{\phi}^{k-1}(y), \tilde{\phi}^{k-1+t}(z)) \le 2$. By applying the ``basic observation" earlier $k-1$ times, we arrive at $\text{dist}(y,\tilde{\phi}^t(z)) \le 2$. Note that $ \tilde{\phi}^t(z)$ is a vertex of degree less than $i^*$, $y$ is a copy of some vertex, and our construction of $G$ satisfy that vertices that are a copy of some vertex in $H$ must have distance at least $5$ from vertices of degree less than $i^*$, so this is impossible.
\end{proof}

By Claim~\ref{claim:topdeg}, $\deg_H(\phi(v))<i^*$ implies that $v$ is not in one of the top groups, 

\begin{corollary}
    There are at most $N$ vertices $v\in V(G)$ with virtual degree $i^*$ satisfying $\deg_H(\phi(v))<i^*$.
\end{corollary}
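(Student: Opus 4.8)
The plan is to read this off directly from Claim~\ref{claim:topdeg}, so the argument is very short. Recall that $V(G)$ is the disjoint union of the vertices that are ``originally in $H$'' — there are exactly $N$ of these, namely the vertices $v_{1,l}$ together with all vertices of $H$ left untouched by the construction — and the copies introduced while building $G$ (the top-group copies $[v']$, the special-neighbor copies $[u']$, the special-second-neighbor copies $S'$, and, in Type~I~(a), their analogues for general $C_d$). Every vertex of $G$ falls into exactly one of these two classes, and the projection $\pi\colon V(G)\to V(H)$ records, for a copy, which vertex of $H$ it duplicates.

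The only step is the following. Fix the blow-up $A$ and its partition map $\phi\colon V(A)\to V(H)$. By Claim~\ref{claim:topdeg}, any vertex $w\in A$ that is one of the constructed copies satisfies $\deg_H(\phi(w))\ge i^*$; equivalently, every vertex $v\in A$ with $\deg_H(\phi(v))<i^*$ must be one of the vertices of $G$ that is originally in $H$. Since there are exactly $N$ such vertices of $G$, there are at most $N$ vertices $v\in A$ with $\deg_H(\phi(v))<i^*$, and a fortiori at most $N$ such vertices that in addition have virtual degree $i^*$. (In fact, such a $v$ is then an original vertex of $H$ with $\deg_H(v)=i^*$, so the count is even at most $N_{i^*}$; but $N$ is all that is used afterward.)

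I do not expect any genuine obstacle here: the statement is a bookkeeping corollary of Claim~\ref{claim:topdeg}, and all the substantive work — the fixed-point argument on the iterated map $\tilde{\phi}=\pi^{-1}\circ\phi$, exploiting that every constructed copy lies at distance at least $5$ from all vertices of degree $<i^*$ — has already been carried out in the proof of that claim.
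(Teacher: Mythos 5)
Your proposal is correct and matches the paper's argument exactly: the corollary is read off from Claim~\ref{claim:topdeg}, which forces every vertex $v\in A$ with $\deg_H(\phi(v))<i^*$ to be an original (non-copy) vertex of $H$, of which there are $N$. No further comment is needed.
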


We now handle Case II, where $x$ has virtual degree $i$ and $\deg_H(\phi(v))=i^*$. 
\begin{claim} 

There are at most $Nd+N$ vertices $x\in V(G)$ with virtual degree $i^*$ such that $\deg_H(\phi(x)) = i^*$. 
\end{claim}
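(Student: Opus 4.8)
The plan is to bound the number of vertices $x \in V(G)$ of virtual degree $i^*$ with $\deg_H(\phi(x)) = i^*$ by splitting according to whether $x$ is a genuine copy of a vertex of $H$ (i.e., was introduced as one of the $v_{j,l}$ or $u_{j,l'}$ or a special-second-neighbor copy in the construction) or is one of the original vertices of $H$ sitting inside $G$. For the original vertices, there are at most $N$ of them in total, so they contribute at most $N$. The work is to bound the copies. The key point should be an analogue of Claim~\ref{claim:topdeg}: since in Case I(a) the chosen top groups have virtual degree exactly $i^*$, a copy $w$ of a vertex $w' \in V(H)$ with $\deg_H(\phi(w)) = i^*$ that lies in $A$ can only come from within a top block, and within each top block the only vertices that get copied are those in $[v]$, $[u]$, and (for Type II, but here we are in Type I(a)) possibly a special second neighbor set — all of which lie in $N_{\le 2}([v])$. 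So I would first argue that any such copy $w$ must satisfy $\phi(w)$ lies in $N_{\le 2}$ of one of the chosen top twin groups in $H$, and conversely, by Claim~\ref{claim:smallphi}, each vertex of $H$ has at most $2d$ preimages under $\phi$ among vertices of $A$.

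Concretely, I would count as follows. The copies introduced in the construction all lie in top blocks; each top block is built around a chosen twin group $[x]$ and introduces at most $(C_d - 1)(|[v]| + |[u]|) \le (C_d - 1)\cdot 2d$ copies, but this depends on $C_d$, which we do not want. Instead, the better route: a copy $w$ of $w' \in V(H)$ with $\deg_H(\phi(w)) = i^*$ in the blow-up $A$ has $\phi(w) \in V(H)$, and by Claim~\ref{claim:smallphi} at most $2d$ vertices of $A$ map to each fixed vertex of $V(H)$. So the number of such copies is at most $2d$ times the number of distinct vertices $z \in V(H)$ with $\deg_H(z) = i^*$ that arise as $\phi(w)$ for some copy $w \in A$. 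Now I claim every such $z$ must be in the twin group $[v]$ (in $H$) of some chosen top group: indeed, the only vertices of $H$ that were copied in the construction are the top vertices of each top block and their special neighbours and special second neighbours; since $\deg_H(z) = i^*$ and (in Type I(a)) the special neighbours $u$ have virtual degree $i^*$ as well while a Type I(a) block's $[v]$ has top degree $i^*$, we conclude $z$ lies in $[v]_H$ or $[u]_H$ for some chosen block. There are $T/3 \le N$ chosen blocks and each contributes one twin group $[v]_H$ and one $[u]_H$, each of size $\le d$, so at most $2Nd$ wait — this overcounts; let me instead bound by: at most $N$ vertices of $H$ total have degree $i^*$, hence at most $N$ choices of $z$, giving at most $2dN$...

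Hmm, the stated bound is $Nd + N$, so I would aim for a slightly tighter accounting: use that for a vertex $z \in V(H)$ of degree $i^*$, Claim~\ref{claim:degreedelta} (when $[z]_H$ has degree $i^*$) forces all virtual-degree-$i^*$ vertices in $\phi^{-1}([z]_H)$ into a single twin group of $G$, and a single twin group of $G$ has size at most $2d$; combined with the fact that only the $\le N$ vertices of degree $i^*$ in $H$ and their $G$-copies are candidates, plus the $\le N$ original vertices, one should land at $Nd + N$. I expect the main obstacle to be the bookkeeping that separates ``copies'' from ``originals'' cleanly and the verification that in a Type I(a) block no vertex of virtual degree $i^*$ with $\deg_H(\phi(\cdot)) = i^*$ is introduced outside the top/special groups — this requires unpacking the Type I(a) construction (which the excerpt defers the details of, via the unspecified constant $C_d$), but crucially the bound must be uniform in $C_d$, which is exactly why we count via fibers of $\phi$ (size $\le 2d$ by Claim~\ref{claim:smallphi}) rather than via the number of copies per block. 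Once Case I, II, III are all bounded by $O(Nd)$, summing gives a bound on all virtual-degree-$i^*$ vertices in any blow-up that is independent of $C_d$, which then contradicts having $C_d^{T/3}$ disjoint naive blow-ups for $C_d$ large, completing the Type I(a) case.
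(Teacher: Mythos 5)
Your high-level decomposition (originals contribute at most $N$; copies contribute at most $Nd$; the bound must be uniform in $C_d$) matches the paper's, but the proposal does not actually establish the $Nd$ bound for the copies, and the routes you sketch toward it both fail in the Type I(a) setting. Your first route gives $2dN$ via Claim~\ref{claim:smallphi}, which you correctly notice overshoots; worse, that claim's proof rests on $G$ having maximum degree at most $2d$, which is true for the $C_d=2$ constructions but false for Type I(a), where a neighbor of a top vertex outside the copied groups is joined to all $C_d$ copies and so the maximum degree of $G$ is of order $dC_d$. For the same reason your second route via Claim~\ref{claim:degreedelta} is shaky: that claim's proof goes through Claim~\ref{claim:deg2}, which is proved only for Type I(b) and Type II blocks, and in a Type I(a) block a vertex of virtual degree $i^*$ adjacent to a top group (but not itself a copy) has $G$-degree strictly larger than $i^*$. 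So neither fallback yields a $C_d$-free bound of $Nd$, and you acknowledge as much (``one should land at $Nd+N$'') without closing the argument.

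The missing idea is a direct adjacency argument that replaces the fiber-size claim. If $x$ is a copy of a top vertex (or special neighbor) with virtual degree $i^*$ and $\deg_H(\phi(x))=i^*$, then by construction $\deg_G(x)=i^*$ as well, so the $i^*$ neighbors $z_1,\dots,z_{i^*}$ of $x$ in $G$ are distributed one per neighboring part of the blow-up, i.e.\ $\phi$ restricts to a bijection $N_G(x)\leftrightarrow N_H(\phi(x))$. Consequently every vertex in the part $\phi^{-1}(\phi(x))$ must be adjacent to \emph{all} of $z_1,\dots,z_{i^*}$. Among copies of top vertices, only the members of $x$'s own twin group in $G$ (the copies with the same index $j$, at most $d$ of them) are adjacent to the particular special-neighbor copies appearing among the $z_i$'s, since $v_{j,l}\sim u_{j',l'}$ only when $j=j'$. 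Hence each of the at most $N$ parts contains at most $d$ such copies, giving $Nd$, and adding the at most $N$ original vertices yields $Nd+N$. This step is what makes the count independent of $C_d$, and it is absent from your proposal.
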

\begin{proof}
Observe that if $v$ is a top vertex or top neighbor (so we constructed $C_d$ copies of $v$ in $G$), and $x$ is a copy of $v\in V(H)$, then $\deg_{G}(x) = \deg_H(v)$. 

In this case, this means $\deg_{G}(x)=i^*=\deg_H(\phi(x))$. This means there is a bijection $N_{G}(x) \leftrightarrow N_H(\phi(x))$. If $N_{G}(x) = \{z_1,\cdots,z_{i^*}\}$, then $\phi^{-1}(\phi(x))$ can contain only vertices adjacent to all of $z_1,\cdots,z_{i^*}$. However, the only vertices adjacent to all of $z_1,\cdots,z_{i^*}$ that arise from a copy of a top vertex must be the vertices in the twin group of $x$. This shows that $\phi^{-1}(\phi(x))$ contains at most $d$ vertices that are a copy of a top vertex. 

Thus, in Case II, there are at most $Nd$ vertices that are a copy of a top vertex and at most $N$ vertices that are not a copy of a top vertex. 
\end{proof}

Case III can be handled in the following claim. 
\begin{claim}
    There are at most $N$ vertices $x \in V(G)$ of virtual degree $i^*$ such that $\deg_H(\phi(x)) > i^*$. 
\end{claim}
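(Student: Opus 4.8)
The plan is to show that the hypothesis $\deg_H(\phi(x))>i^*$ forces such a vertex $x$ to have strictly larger degree in $G$ than its virtual degree, and then to invoke Claim~\ref{claim:deg1} to locate all such vertices. First I would record the elementary fact that the partition map of a blow-up cannot decrease degrees: if $G[A]$ is the blow-up $A$ with partition $\phi\colon A\to V(H)$ and $x\in A$, then for every $w\sim_H\phi(x)$ the part $\phi^{-1}(w)$ is nonempty and $x$ is complete to it, so $x$ has at least one neighbor in each such part; since these parts are pairwise disjoint, $\deg_G(x)\ge\deg_{G[A]}(x)\ge\deg_H(\phi(x))$.

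Now let $x\in A$ have virtual degree $i^*$ and satisfy $\deg_H(\phi(x))>i^*$. Combining with the inequality above gives $\deg_G(x)\ge\deg_H(\phi(x))>i^*$, so the degree of $x$ in $G$ is strictly larger than its virtual degree ($=i^*$). By Claim~\ref{claim:deg1}, the only vertices of $G$ with this property are those adjacent to a top vertex, a special neighbor, or a special second neighbor while not themselves being one of these; in particular $x$ is not a top vertex, a special neighbor, or a special second neighbor.

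Finally I would use that in the Type~I~(a) construction every vertex of $G$ that is a copy (one of the $v_{j,l}$ or $u_{j,l'}$ with $j\ge 2$, and recall no special second neighbors are created in this case) is, by the definitions of top group and special neighbor group, a top vertex or a special neighbor. Hence $x$, being none of these, must be one of the $N=v(H)$ original vertices of $H$, so there are at most $N$ such $x$, as claimed. I do not anticipate a genuine obstacle here; the only point needing a moment of care is confirming that the generalization of the construction to arbitrary $C_d$ still makes every copy-vertex a top vertex or special neighbor so that Claim~\ref{claim:deg1} applies verbatim, which is immediate from the construction.
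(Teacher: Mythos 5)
Your proposal is correct and follows essentially the same route as the paper: the paper's one-line proof asserts that such an $x$ cannot be a copy of a vertex of $H$, and your degree argument ($\deg_G(x)\ge\deg_H(\phi(x))>i^*$ while every copy has $G$-degree equal to its virtual degree $i^*$, packaged via Claim~\ref{claim:deg1}) is precisely the justification the paper leaves implicit. No gaps.
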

\begin{proof}
Note in Case III, since $\deg_H(\phi(x)) > i^*$, $x$ must not be a copy of some vertex in $H$, so there are at most $N$ vertices of virtual degree $i^*$ in case III. 
\end{proof}

Combining the three cases above, the number of vertices of virtual degree $i$ in the blow-up is at most
$$ N + Nd + N + N = N(d+3).$$

Note that this bound can clearly be improved, but all we care for now is the independence of this bound from $C_d$. Recall $\frac T3 = \frac{N}{6d^{20d-20i+31}} \ge \frac{N}{d^{20d+20}}$ is the number of top blocks. We have $C_d^{T/3}$ naive blow-ups, all of them differ in the set of virtual degree $i$ vertices. We construct an arbitrary map 
$$ f \colon \{ \text{Vertex sets that form naive blow-ups} \} \to  \{ \text{Vertex sets that form maximal blow-ups} \},$$
such that $f(B) \supseteq B$ for all $B$. We showed that each maximal blow-up contains at most $N(d+3)$ vertices in $G$ of virtual degree $i$, and since the naive blow-ups all differ in the set of virtual degree $i$ vertices, every maximal blow-up contains at most $2^{N(d+3)}$ naive blow-ups. This tells us that the number of maximal blow-ups is at least 
$$ \frac{C_d^{N/d^{20d+20}}}{2^{N(d+3)}} = \left( \frac{C_d^{1/d^{20d+20}}}{2^{d+3}}\right)^N.$$

We finally specify $C_d = 2^{d^{21d+20}}$. Then the graph $G$ has at most $NC_d = 2^{d^{21d+20}}N$ vertices, and the maximal $H$-blow-ups in $G$ is at least 
$$ \left( 2^{d^{d}-d-3}\right)^N \ge \left( 2^{d^{d}-d-3}\right)^{v(G)/2^{d^{21d+20}}}.$$
\end{proof}

 Note that this construction and proof works for (induced) copies or with prescriptions.

\begin{theorem}\label{thm: boundedprescribed}
    Let $d\ge 2$ be a positive integer. Let $H$ be a connected graph with maximum degree $d$ and $N$ vertices, where $N > d^{d^{d^d}}$. Let $U\subset V(H)$ be a fixed subset of vertices. Then there exists a graph $G$ such that $N \leq |V(G)| \leq 2^{d^{21d+20}}N$, the maximum degree is at most $d2^{d^{21d+20}}$, and there are at least $2^{|V(G)|/2^{d^{21d+20}}}$ vertex sets in $G$ corresponding to maximal blow-ups of $(H,U)$.   
\end{theorem}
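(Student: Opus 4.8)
The plan is to invoke the construction of $G$ from $H$ used in the proof of Theorem~\ref{thm: bounded} verbatim: that construction never refers to a prescribed set, it only blows up twin groups of $H$ and adds the indicated copies, so it produces the same graph $G$, with the same bounds $N \le |V(G)| \le 2^{d^{21d+20}}N$ and maximum degree at most $d\,2^{d^{21d+20}}$, and the same $C_d^{T/3}$ \emph{naive blow-ups}. What has to be checked is only that the lower bound proved there on the number of maximal blow-ups is in fact a lower bound on the number of maximal $(H,U)$-blow-ups. The first observation is that each naive blow-up is an induced copy of $H$, so in it every group corresponding to a vertex of $H$ is a single vertex; since a singleton is a clique, each naive blow-up is automatically a legitimate $(H,U)$-blow-up, for \emph{every} $U \subseteq V(H)$. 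As $G$ is finite, each naive blow-up can be enlarged one vertex at a time, preserving the $(H,U)$-blow-up property, to a maximal $(H,U)$-blow-up; so we may again fix an arbitrary map $f$ from the set of vertex sets forming naive blow-ups to the set of vertex sets forming maximal $(H,U)$-blow-ups with $f(B) \supseteq B$.

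Next I would note that a $(H,U)$-blow-up is in particular an ordinary blow-up of $H$: it carries a partition $\phi$ of its vertex set onto $V(H)$ satisfying the adjacency condition on the edges of $H$, and a partition of this kind is all that Claims~\ref{claim:deg1}--\ref{claim:smallnum} and Claim~\ref{claim:topdeg} (together with its corollary and the Case~II/III counts in the Type~I(a) analysis) ever use. Applying those claims verbatim to the witnessing partition $\phi$ of a maximal $(H,U)$-blow-up gives the same bound on the number of twin groups, respectively vertices, of virtual degree $i^*$ that such a blow-up can contain, hence the same bound on the sizes of the fibers of $f$: at most $2^{24Ud^{-7}}$ in the Type~I(b) and Type~II cases, and at most $2^{N(d+3)}$ in the Type~I(a) case, with the same choice $C_d = 2^{d^{21d+20}}$. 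The counting argument then runs unchanged and produces at least $2^{U(1-24d^{-7})} \ge 2^{|V(G)|/2^{d^{21d+20}}}$ vertex sets forming maximal $(H,U)$-blow-ups, which is the claim.

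The only step at which prescriptions could conceivably interfere is the very first one — the existence and well-definedness of the extension map $f$ — and the obstacle there disappears precisely because singleton groups are cliques, so the naive blow-ups already meet every clique-prescription imposed by $U$ and therefore lie in the domain of an extension to maximality. Everything downstream of that is a statement about blow-up partitions, not about prescriptions, so no genuinely new ingredient is needed; in effect the proof is the short verification that the construction of Theorem~\ref{thm: bounded} is prescription-agnostic and that its counting lemmas only ever see the partition $\phi$.
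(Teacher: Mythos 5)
Your proposal is correct and matches the paper, which justifies Theorem~\ref{thm: boundedprescribed} by exactly this observation—that the construction and counting in the proof of Theorem~\ref{thm: bounded} are prescription-agnostic, since the naive blow-ups have singleton groups (hence satisfy any clique prescription $U$) and the fiber-size claims only ever use the witnessing partition $\phi$ of a blow-up of $H$. One small wording caveat: a blow-up cannot in general be enlarged \emph{one vertex at a time} while remaining a blow-up (see the paper's Figure~\ref{fig:hardmaximal}); the extension map $f$ exists simply because, by finiteness of $G$, the family of vertex sets inducing $(H,U)$-blow-ups and containing a given naive blow-up has a maximal element, which is then a maximal $(H,U)$-blow-up of $G$.
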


\begin{theorem}\label{thm:boundedinduced}
    Let $d\ge 2$ be a positive integer. Let $H$ be a connected graph with maximum degree $d$ and $N$ vertices, where $N > d^{d^{d^d}}$. Let $U_+,U_-\subset V(H)$ be two disjoint subsets of vertices. Then there exists a graph $G$ such that $N \leq |V(G)| \leq 2^{d^{21d+20}}N$, the maximum degree is at most $d2^{d^{21d+20}}$, and there are at least $2^{|V(G)|/2^{d^{21d+20}}}$ vertex sets in $G$ corresponding to maximal induced blow-ups of $(H,U_+,U_-)$.   
\end{theorem}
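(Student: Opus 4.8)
The plan is to reuse, essentially verbatim, the construction of $G$ and the counting argument from the proof of Theorem \ref{thm: bounded}; the only genuinely new content is checking that the induced constraint and the prescription $(U_+,U_-)$ do not break anything. First I would build $G$ by the same three-case recipe (Types I(a), I(b), II), keeping the convention already present in that construction that a copy $\{v_{j,1},\dots,v_{j,a}\}$ of a twin group $[v]$ of $H$ is made a clique or an independent set according to whether $[v]$ is, and is given the same external adjacencies as $[v]$. With this convention the induced subgraph of $G$ on the vertex set obtained by selecting one copy index $j$ per top block is isomorphic to $H$; hence each of the $C_d^{T/3}$ ``naive blow-ups'' is a blow-up of $H$ in which every part is a single vertex. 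A singleton part is simultaneously a clique and an independent set and has the correct (non-)adjacencies to every other part, so each naive blow-up is automatically an induced blow-up and automatically satisfies the prescription $(U_+,U_-)$. The size bounds $N\le v(G)\le 2^{d^{21d+20}}N$, the maximum-degree bound, and the distinctness of the naive blow-ups as vertex sets are all unaffected.

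Next I would observe that every structural claim used in the proof of Theorem \ref{thm: bounded} to control how an arbitrary blow-up of $H$ in $G$ interacts with the top blocks --- Claims \ref{claim:deg1}--\ref{claim:deg2} about $G$ itself, Claim \ref{claim:degreedelta}, Claim \ref{claim:smallphi}, Claim \ref{claim:smallnum}, and, in the Type I(a) analysis, Claim \ref{claim:topdeg} together with the case analysis following it --- uses only the ``adjacent parts are complete'' half of the definition of a blow-up, together with the metric inequality $\mathrm{dist}(x,y)\le \mathrm{dist}(\phi(x),\phi(y))$ and the boundedness of the fibers of the partition map $\phi$. None of this depends on the blow-up being induced or on the clique/independent prescriptions. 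Since an induced $(U_+,U_-)$-prescribed blow-up of $H$ in $G$ is in particular a blow-up of $H$ in $G$, all these bounds apply to it a fortiori: any such blow-up that contains at least one naive blow-up still contains at most $n_{i^*}+2d\cdot\frac{2N}{d^{20(d-i^*+2)-1}}$ twin groups of virtual degree $i^*$ in Types I(b) and II, and at most $N(d+3)$ vertices of virtual degree $i^*$ in Type I(a).

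The final step is the same counting argument: fix any map $f$ from the set of vertex sets forming naive blow-ups to the set of vertex sets forming maximal induced $(U_+,U_-)$-prescribed blow-ups with $f(B)\supseteq B$, which exists because $G$ is finite and any induced prescribed blow-up can be enlarged to a maximal one. The naive blow-ups differ precisely in which of the $C_d$ copies of each top group (and, in Type II, of each special-second-neighbor group) they contain, and by the estimates above a maximal induced prescribed blow-up that contains some naive blow-up can contain both (all) copies of only boundedly many of these groups; hence each fiber of $f$ has size at most $2^{24Ud^{-7}}$ in Types I(b) and II, respectively $2^{N(d+3)}$ in Type I(a), and dividing $C_d^{T/3}$ by the fiber size gives the asserted exponential lower bound. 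Replacing $(U_+,U_-)$ by $U$ and dropping the ``non-adjacent parts are empty'' requirement gives Theorem \ref{thm: boundedprescribed} in the same way. I expect no serious obstacle here --- the mathematical heart is entirely in Theorem \ref{thm: bounded} --- and the only point demanding care is not to conflate maximality in the induced-prescribed sense with maximality in the unrestricted sense when invoking the structural claims; once one records that those claims concern the unrestricted notion, which subsumes the restricted one, the argument closes.
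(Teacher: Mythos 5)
Your proposal is correct and coincides with the paper's own treatment: the paper gives no separate proof of Theorem \ref{thm:boundedinduced}, only the remark that the construction and argument of Theorem \ref{thm: bounded} carry over, and your elaboration — naive blow-ups have singleton parts and are induced copies of $H$, hence automatically induced and $(U_+,U_-)$-prescribed, while every upper bound on how many naive blow-ups a single blow-up can contain uses only the ``adjacent parts are complete'' half of the definition and so applies a fortiori to induced prescribed blow-ups — is exactly the intended justification.
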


\section{Other Examples}\label{sec:unbounded}
When the patterns in an infinite family $\mathcal{H}$ do not have bounded maximum degree, the situation becomes more complex. In some cases, the bound may be polynomial; in others, it may be exponential. We present two such examples in this section.

\begin{proposition}[$F_N$ with no prescriptions and uninduced is polynomial]
    Fix $N>c$. Consider the star $F_N$ with $N$ leaves with no prescriptions. Every $c$-closed graph $G$ on $n$ vertices has $\le 3^cn^2 + n^c$ maximal uninduced blow-ups of $F_N$.
\end{proposition}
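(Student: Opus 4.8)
The plan is to classify every maximal uninduced $F_N$-blow-up into one of two types and count each. First I would unwind the definition: since $F_N$ is the star with center $o$ and leaves $\ell_1,\dots,\ell_N$, has no prescriptions, and has pairwise non-adjacent leaves, a set $S\subseteq V(G)$ induces an uninduced $F_N$-blow-up exactly when there is a nonempty $A_0\subseteq S$ that is complete to $S\setminus A_0$ with $|S\setminus A_0|\ge N$ (the leaf parts can be any partition of $S\setminus A_0$ into $N$ nonempty sets). So I fix a maximal blow-up $S$ together with such a partition and write $R:=S\setminus A_0$; note $|R|\ge N>c$. If $n<N+1$ there are no $F_N$-blow-ups and we are done, so assume $n\ge N+1>c$.

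Two structural facts drive the argument. First, $A_0$ is a clique: two non-adjacent vertices of $A_0$ would share all of $R$, hence at least $|R|\ge N>c$ common neighbors, contradicting $c$-closedness. Second, maximality forces $R$ to be the full common neighborhood of $A_0$: if some $z\in V(G)\setminus S$ were adjacent to every vertex of $A_0$, then keeping $A_0$ as the center and dropping $z$ into one of the $N$ leaf parts would give a strictly larger $F_N$-blow-up; hence $\bigcap_{v\in A_0}N(v)\subseteq S$, and combined with $R\subseteq\bigcap_{v\in A_0}N(v)$ and $A_0\cap\bigcap_{v\in A_0}N(v)=\emptyset$ this yields $R=\bigcap_{v\in A_0}N(v)$, so $S=A_0\cup\bigcap_{v\in A_0}N(v)$ is determined by $A_0$ alone.

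Now I would split on whether $S$ admits a partition with $|A_0|\ge c$. If it does, then for that partition any two non-adjacent vertices of $R$ would share all of $A_0$, i.e.\ at least $c$ common neighbors, a contradiction; so $R$ is a clique, and since $A_0$ is a clique complete to $R$, the whole set $S=A_0\cup R$ is a clique. By the second structural fact no vertex outside $S$ is adjacent to all of $A_0$, hence none is adjacent to all of $S$, so $S$ is a maximal clique of $G$; by Theorem~\ref{theorem:maxcliques} there are at most $3^{(c-1)/3}n^2\le 3^cn^2$ of these. If $S$ admits no such partition, then every partition has $1\le|A_0|\le c-1$; picking one, $S=A_0\cup\bigcap_{v\in A_0}N(v)$ is determined by the set $A_0$ of size at most $c-1$, so the number of such $S$ is at most the number of vertex subsets of size between $1$ and $c-1$, namely $\sum_{j=1}^{c-1}\binom nj\le (c-1)n^{c-1}\le n^c$ (using $n>c$). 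Adding the two cases gives the claimed bound $3^cn^2+n^c$.

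The main obstacle, as emphasized in Section~\ref{subsec:diff}, is that a blow-up can be enlarged by \emph{re-partitioning} rather than merely by adjoining a vertex to an existing part, which normally makes maximality delicate to exploit. What rescues the argument is that for an \emph{upper} bound we only need \emph{one} enlargement move available whenever it should be — sliding a vertex complete to $A_0$ into a leaf part — and this alone pins $R$ down to $\bigcap_{v\in A_0}N(v)$, so we never have to enumerate re-partitionings. The other point to get right is the dichotomy ``the center $A_0$ is either large (forcing all of $S$ to be a clique) or small (leaving few choices for $S$)'', which is precisely where $c$-closedness is invoked twice: once on non-edges inside $A_0$ (always, giving that $A_0$ is a clique) and once on non-edges inside $R$ (in the large-center case).
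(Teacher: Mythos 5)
Your proof is correct and follows essentially the same route as the paper's: both show the center part is a clique via $c$-closedness, split on whether the center has size at least $c$ (in which case the whole blow-up is a maximal clique, counted by Theorem~\ref{theorem:maxcliques}) or size less than $c$ (in which case maximality pins the leaf set down to the common neighborhood of the center, so the blow-up is determined by at most $n^c$ choices of center). Your write-up is somewhat more explicit about why maximality forces $R=\bigcap_{v\in A_0}N(v)$, but the decomposition and both uses of $c$-closedness are identical to the paper's.
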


\begin{proof}

    For every maximal uninduced $F_N$-blow-up $G[W]$, note that $W$ can be partitioned into $S\sqcup T$ where $|T| \ge N$ and $G[S,T]$ is complete bipartite. Since $N> c$, $G[S]$ is complete. We have two cases:

    \begin{itemize}
        \item $|S| > c$. Then $G[W]=G[S\sqcup T]$ is complete, so this is equivalent to counting maximal cliques of size $\ge N+1$. There are $\le 3^cn^2$ maximal cliques by \cite{paper1}. 
        \item $|S| \le c$. In this case we count the number of pairs $(S,T)$ that make $S\sqcup T$ the vertex set of a maximal $F_N$-blow-up. For every fixed $S$, then if $T' = \bigcap_{s\in S} N(s)$, where $N(s)$ is the neighborhood of $s$ in $G$, then $S\sqcup T'$ is clearly a maximal $F_N$-blow-up. Since $S$ is the set of vertices prescribed onto the unique non-leaf of $F_N$, every $(S,T)$ must satisfy $T\subseteq T'$. Furthermore $T=T'$ because $G[S\sqcup T]$ is a maximal $F_N$-blow-up. Since there are $\le n^c$ choices for $S$, there are $\le n^c$ choices for $(S,T)$.
    \end{itemize}
    The conclusion follows.
\end{proof}

\begin{corollary}
    For any $c>1$, the family $\{(F_N,\emptyset)\}_{N>c}$ is a $c$-closed polynomial pattern family.
\end{corollary}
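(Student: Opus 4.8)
The plan is to observe that this Corollary is an immediate consequence of the Proposition just proved, together with the single crucial feature that the bound $3^c n^2 + n^c$ appearing there does not depend on $N$. Recall from Definition~\ref{def: polyexpinf} that to certify $\{(F_N,\emptyset)\}_{N>c}$ is a $c$-closed polynomial pattern family, we must exhibit one polynomial $f(x)\in\mathbb{R}[x]$ — allowed to depend on $c$, but not on the choice of pattern in the family — such that for every $c$-closed graph $G$ on $n$ vertices and every $N>c$, the number of sets $S\subseteq V(G)$ for which $G[S]$ is a maximal (uninduced, unprescribed) blow-up of $F_N$ is at most $f(n)$.

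First I would invoke the preceding Proposition: for each fixed $N>c$ and each $c$-closed graph $G$ on $n$ vertices, the number of maximal uninduced $F_N$-blow-ups in $G$ is at most $3^c n^2 + n^c$. The key point — already transparent in its proof, where the case split on the size of the part prescribed onto the unique non-leaf (``$|S|>c$'' versus ``$|S|\le c$'') uses only the threshold $c$ and never the value $N$ — is that this estimate is \emph{uniform} in $N$. It is exactly this uniformity that upgrades ``each member of the family is individually polynomial'' to ``the family is a polynomial pattern family''.

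Then I would simply take $f(x):=3^c x^2 + x^c$. Since $c>1$ is a fixed constant, $f$ is a genuine polynomial in $x$, and by the previous paragraph $f(n)$ simultaneously bounds the relevant count for every $N>c$. This is precisely what Definition~\ref{def: polyexpinf} demands, so $\{(F_N,\emptyset)\}_{N>c}$ is a $c$-closed polynomial pattern family, completing the proof. There is no substantive obstacle here: all the work lies in the Proposition, and the only thing to verify is the (already established) independence of its bound from $N$.
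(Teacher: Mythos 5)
Your proposal is correct and matches the paper's (implicit) argument: the corollary is stated there without proof precisely because the bound $3^c n^2 + n^c$ from the preceding proposition is uniform in $N$, so one may take $f(x)=3^c x^2 + x^c$ exactly as you do. Nothing further is needed.
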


Note that $\{(F_N,\emptyset)\}_{N\ge 1}$ is also $c$-closed polynomial pattern family, since the number of maximal $F_j$-blow-ups in a graph $G$ is polynomial in $v(G)$ by Theorem \ref{thm:noninduced}.

\begin{lem}
  Fix $h \geq 3$.  Let $\mathcal{F} = \{(F_m,U_m)\}_{m\ge 2}$ where $F_m$ is a tree with depth $h$, and all vertices in depths $0,\ldots,h-1$ have $m$ children, and $U_m$ is an arbitrary subset of vertices of $F_m$. We call $F_m$ the $m$-ary tree with depth $h$. There exists arbitrarily large $n$ such that there exists 2-closed graphs $G_n$ on $n$ vertices such that the number of maximal $(F_m,U_m)$-blow-ups of some $F_m \in \mathcal{F}$ in $G_n$ is at least $\exp(Cn^c)$ where $C,c>0$ are constants depending only on $h$. 
 
\end{lem}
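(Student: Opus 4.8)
The plan is to exhibit, for each $m$, a $2$-closed host graph $G_n$ on roughly $n$ vertices containing many maximal blow-ups of the $m$-ary depth-$h$ tree $F_m$. The natural idea is to reuse the matching-gadget strategy from the proof of Theorem~\ref{thm:inducedchar}, Statement 2, but applied recursively to build a deep tree: replace each edge (or each parent–child relation between consecutive levels) of a single ``skeleton'' copy of $F_m$ by a perfect matching between two large independent sets, so that choosing one endpoint of each matching edge independently yields a blow-up, while any blow-up is forced to use at most a bounded number of ``both-endpoint'' choices. Since a tree has no bad structure forcing cliques, the independent-set version of the construction is the relevant one, and a tree is already $2$-closed (any two nonadjacent vertices in a tree have at most one common neighbor), so after the matching substitution the host stays $2$-closed, by the same three-case common-neighborhood check used in the proof of Statement~2 of Theorem~\ref{thm:inducedchar}.

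Concretely, first I would fix $h$ and pick $m$ large, set $F_m$ to be the $m$-ary depth-$h$ tree, which has $M := 1 + m + m^2 + \cdots + m^h = \Theta(m^h)$ vertices. Build $G_n$ from $F_m$ by, for each non-leaf $v$ with children $c_1,\dots,c_m$, inflating the parent $v$ into an independent set (or a clique, according to whether $v \in U_m$; but independent set is the safe default) of size $K$ and each child $c_j$ likewise into a set of size $K$, and putting a perfect matching between the $K$ copies of $v$ and the $K$ copies of each child — but done carefully so that a \emph{single} global choice function (choosing, coherently down the tree, which ``copy index'' to keep) produces an induced blow-up. The cleanest version: index all the inflated vertices by a single coordinate $t \in [K]$, so that keeping coordinate $t$ at every inflated vertex yields an induced copy of $F_m$, and distinct $t$'s are non-adjacent across matchings; then the $2^K$-style freedom comes from the fact that the matching structure lets different inflated vertices independently toggle which of two coordinates they retain, exactly as in Proposition~\ref{prop:unifying}. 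I would then invoke the now-standard pigeonhole argument: if a blow-up $A$ contained too many indices $t$ with ``both endpoints present'' at some matching, then by the pigeonhole principle two such doubled pairs land in the same part $A_l$, forcing (by the adjacency pattern of the matching) \emph{all} toggled vertices at that location into $A_l$, hence $A_l$ is too large or has wrong clique/independence structure, contradicting that only bad twin groups can be large and that $F_m$ (a tree) has bad twin groups only of a controlled form. Counting as in Proposition~\ref{prop:unifying} then gives at least $2^{-O(M^2)}(2^{K}-2)$ maximal blow-ups, and with $K = \Theta(n / M)$ and $M$ chosen as a suitable power of $n$ (say $M \approx n^{1/(h+1)}$), this is $\exp(C n^{c})$ for constants $C, c > 0$ depending only on $h$.

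The main obstacle I anticipate is \emph{not} the counting but the regrouping/maximality control across a deep tree: unlike the depth-$1$ matchings handled in Theorem~\ref{thm:inducedchar}, a depth-$h$ tree has many twin-type symmetries (sibling subtrees at the same node are interchangeable), so a blow-up of $F_m$ inside $G_n$ could in principle re-route a whole inflated subtree into a different part, and one must rule this out — or rather, bound how many matching edges can be ``doubled'' even allowing such re-routings. I would handle this by making the matchings at different tree-positions \emph{distinguishable}: attach to each inflated parent set a small private gadget (e.g. a pendant clique of a size unique to that node, kept small so as not to spoil $2$-closedness — or, since pendant cliques threaten $2$-closedness, a short private path), so that the only automorphism-compatible way to realize $F_m$ inside $G_n$ respects the intended tree structure up to the leaf level, localizing all the freedom to the per-matching toggles. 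A second, more routine obstacle is verifying the degree/closure bounds after all these insertions; this is a direct but tedious case check of common neighborhoods exactly parallel to the bullet-point verification in the proof of Theorem~\ref{thm:inducedchar}, Statement~2, and to Claims~\ref{claim:deg1}–\ref{claim:deg2}, so I would state it as a lemma and defer the mechanical details.
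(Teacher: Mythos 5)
There is a genuine gap, and it is fatal to the approach rather than a missing detail. The lemma demands a \emph{$2$-closed} host graph, but the matching-gadget constructions you are importing from Theorem~\ref{thm:inducedchar}, Statement~2, and Proposition~\ref{prop:unifying} only yield $(k+1)$-closed (or $(c+2)$-closed) graphs, and this is not an artifact of the analysis. Concretely: if you inflate an internal vertex $x$ of the tree (which has $\deg(x)\ge 2$) into a set $\{a_1,\dots,a_K\}$ with each $a_i$ joined to all of $N(x)$, then for an independent-set inflation the non-adjacent pair $a_i,a_j$ has all of $N(x)$, hence at least two vertices, in its common neighborhood; for a clique inflation together with a matching to $\{b_1,\dots,b_K\}$, the non-adjacent pair $a_i,b_j$ ($i\ne j$) has the two common neighbors $a_j$ and $b_i$. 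Either way $2$-closedness is destroyed, so "the same three-case common-neighborhood check" cannot go through. Separately, the one version of your construction that you actually pin down --- indexing all inflated vertices by a single coordinate $t\in[K]$ with distinct coordinates non-adjacent --- is just $K$ disjoint copies of $F_m$; since $F_m$ is connected, every blow-up lives inside a single copy, so there is no cross-coordinate "toggling" and no exponential count. The subsequent appeal to Proposition~\ref{prop:unifying} therefore has no concrete construction to attach to, and the "private gadget" fix for maximality would only add further unverified closure obligations.

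For comparison, the paper's proof avoids gadgets entirely: it takes the host $G_n$ to be the $2m$-ary tree $F_{2m}$ of the same depth $h$ (a tree is automatically $2$-closed), and produces blow-ups by choosing an $m$-ary subtree of depth $h-1$ rooted at the root and then adjoining \emph{all} depth-$h$ children of its depth-$(h-1)$ vertices. Maximality is forced by a path-length argument (a non-leaf of $G$ in a blow-up must map to a non-leaf of $F_m$, with singleton fiber), and the count $\prod_{i=1}^{h-1}\binom{2m}{m}^{m^{i-1}}\ge 2^{m^{h-1}}\ge \exp(C\,v(G)^{(h-1)/h})$ comes from the freedom of choosing $m$ of $2m$ children at each internal node. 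If you want to salvage a gadget-style proof you would have to confine all inflation to leaves (where independent-set inflation can preserve $2$-closedness, since a leaf's neighborhood is a single vertex) and then rebuild the maximality and counting arguments from scratch; as written, the proposal does not establish the lemma.
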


\begin{proof}
    Let $G=F_{2m}$ with root $r$. Note that $v(G)=\sum_{i=0}^h (2m)^i<2(2m)^h.$  We want to count the number of maximal uninduced blow-ups of $F_m$ in $G$. 
    
    We consider blow-ups of $F_m$ of this form: we first take an induced subgraph of the $G$ that is still rooted at $r$ and is $m$-ary subtree of depth $h-1$. Then for each vertex of depth $h$ in $G$ whose parent is in the subtree, we include it in the blow-up. Call the blow-up $A$.

    We claim such a blow-up are necessarily maximal. Let $B$ the a maximal blow-up containing $A$. Let the blow-up morphism from $B$ to $F_m$ be called $\phi$. The key observation is that, if $v\in B$ is not in depth $h$ (i.e. not a leaf in $G$), then $\phi(v)$ must not be a leaf in $F_m$. This is because the longest path starting from such a $v$ in $B$ has length at most $2h-1$, while the longest path starting from any leaf in $F_m$ has length $2h$. An implication is that if $v\in B$ is not in depth $h$, then the fiber of $\phi(v)$ has size $1$, for if the fiber of $\phi(v)$ has size larger than $1$ then $\phi(v)$ has at most one neighbour and thus is a leaf in $F_m$.
    Therefore, the total number of vertices in depth $0$ through $h-1$ of $A,B,$ and $F_m$ are the same. Since $A\subset B$, $B\backslash A$ can only contains vertices of depth $h$. But no such vertex exists because all its neighbours (at depth $h-1$) in $G$ are not contained in $B$. 
    
    Now we count. Note the number of these blow-ups is equal to the number of $m$-ary trees of height $h-1$ in a $2m$-ary tree. At depth $1$, $m$ of the $2m$ vertices must be chosen, so there are $\binom{2m}{m}$ chocies for the vertices of depth $1$. Then at depth $2$, each of these $m$ vertices has $2m$ children of which $m$ must be chosen, so for each of these $m$ vertices there are $\binom{2m}{m}$ choices. Thus $ \binom{2m}{m}^m$ choices for the vertices with depth $2$. Analogously, there are $\binom{2m}{m}^{m^{i-1}}$ choices for the vertices with depth $i$ for all $i=1,\cdots,h-1$, so the total number of choices are $\prod_{i=1}^{h-1}  \binom{2m}{m}^{m^{i-1}} \ge 2^{m+\cdots+m^{h-1}} \ge 2^{m^{h-1}}$  Since $v(G) \le 2(2m)^h$, this shows that the number of maximal blow-ups is at least $2^{2^{-h}v(G)^{\frac{h-1}{h}}}$.  
   
\end{proof}

Lastly, we prove another general result regarding enumerating maximal blow-ups of patterns in an infinite family.  
\begin{proposition}\label{prop:codeg}

    Let $H$ be a $c$-closed graph with the minimum degree $\delta(H)$ is relatively compared to the the maximum codegree, i.e. $\delta(H) > 2+2 \max\limits_{u\ne v \in V(H)} codeg(u,v)$. Furthermore, $ \max\limits_{u\ne v \in V(H)} codeg(u,v) < c$ and $\delta(H)\geq 5$. Then there exists a $2c$-closed graph  $G$ on $2v(H)$ vertices such that the number of maximal uninduced $H$-blow-ups (with no prescriptions) in $G$ is at least $\exp(Cv(H)^{1/2})$ for some universal constant $C>0$. 
\end{proposition}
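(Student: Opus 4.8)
The plan is to mimic the ``toggle gadget'' strategy behind the earlier exponential constructions, culminating in Proposition~\ref{prop:unifying}. I would build a $2c$-closed graph $G$ on at most $2v(H)$ vertices that contains a common core $U$ together with $K=\Theta\!\left(v(H)^{1/2}\right)$ pairwise-independent gadgets, each admitting two interchangeable ``states'', so that (i) for every assignment of a state to each gadget, the union of those states with $U$ is an (a priori non-maximal) uninduced $H$-blow-up, and (ii) no uninduced $H$-blow-up of $G$ can contain ``both states'' of more than a bounded number of gadgets. Granting (i)--(ii), sending each of the $2^K$ state-assignments to some maximal $H$-blow-up that contains it is a map whose fibres have size $2^{O(1)}$ — a maximal blow-up in the image must contain at least one full state of every gadget, hence contains exactly one state of all but boundedly many gadgets — so $G$ has at least $2^{K-O(1)}\ge \exp\!\left(C\,v(H)^{1/2}\right)$ maximal uninduced $H$-blow-ups. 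This is the same fibre-counting as in the proof of Proposition~\ref{prop:unifying}, except that the weaker ``$3k^2$'' bound there is replaced by the bounded-double-absorption estimate (ii), which is what lets the conclusion survive.

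The first step is to extract the structural content of the hypotheses. The gap $\delta(H)>2+2\max_{u\ne v}\operatorname{codeg}(u,v)$ forces $H$ to have \emph{no bad vertices and no plus-twin pairs}: if $N_H(v)$ were a clique, any two of its $\ge\delta(H)$ vertices would have at least $\delta(H)-1$ common neighbours, and a plus-twin pair would likewise have codegree $\ge\delta(H)-1$; but $\delta(H)-1>\max\operatorname{codeg}$ since $\delta(H)>2+2\max\operatorname{codeg}\ge\max\operatorname{codeg}+2$, a contradiction. The absence of bad vertices is the decisive obstruction: one cannot blow a single vertex $v$ of $H$ up into a long matching (as in the finite-pattern constructions) and still keep bounded codegree, because a non-adjacent pair of neighbours of $v$ — which must exist, $v$ not being bad — would acquire a common neighbourhood of size proportional to the blow-up, destroying $2c$-closedness. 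Consequently each gadget can perturb only a bounded region of $H$, the gadgets must be mutually far apart, and balancing how many such ``codegree-safe'' gadgets can be packed into a graph on $v(H)$ vertices against the room needed for the core is what yields the exponent $v(H)^{1/2}$ rather than $v(H)$.

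With this in mind, I would construct $G$ by choosing inside $H$ a suitably spread-out family of $K=\Theta\!\left(v(H)^{1/2}\right)$ vertices (or short edges) whose neighbourhoods are pairwise almost disjoint and codegree-safe, and installing at each of them a small gadget obtained by duplicating the chosen vertex together with a constant-sized part of its neighbourhood, the copies joined by a matching-type wiring so that along every edge of $H$ the required complete bipartite bond is still present, and so that the two ``states'' of the gadget correspond to the two sides of this matching; the untouched portion of $H$ is the core $U$, and the gadgets are made small enough that $|V(G)|\le 2v(H)$. Checking that $G$ is $2c$-closed reduces to a short case analysis on whether two non-adjacent vertices lie inside one gadget, straddle a gadget, or lie outside all gadgets: since each gadget contributes only a bounded number of new common neighbours to any non-adjacent pair while $\operatorname{codeg}_H<c$ to begin with, the total stays below $2c$, the ``$+2$'' of the hypothesis being exactly the slack consumed. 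That each state-assignment together with $U$ is an uninduced $H$-blow-up is a direct verification from the matching wiring, since for an uninduced blow-up only the completeness constraints along edges of $H$ need to hold.

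The heart of the argument — and the step I expect to be the main obstacle — is property (ii), the bounded-double-absorption estimate. I would argue by a pigeonhole-and-cascade in the spirit of Claims~\ref{claim:disjoint} and~\ref{claim:inducedmain}: if an uninduced $H$-blow-up $A$ of $G$ contained both states of some gadget, then the genuine \emph{local twinning} of the duplicated vertices — genuine precisely because, $H$ having no bad vertices, these vertices cannot be absorbed into a clique part and thereby ``hidden'' — would force all of that gadget's vertices, and then an expanding neighbourhood of them, into a single part $A_z$ of the partition; but then, since $\delta(H)\ge 5$, every neighbour of $z$ in $H$ still needs a vertex of $A$ drawn from the common neighbourhood of $A_z$, which has size $<2c$ because $A_z$ contains a non-adjacent pair, and once more than boundedly many gadgets have been doubly absorbed these small common neighbourhoods cannot collectively cover the remaining parts, so $A$ would require more vertices than the available $\le 2v(H)$ — a contradiction. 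Plugging the resulting bound into the fibre count described in the first paragraph gives at least $\exp\!\left(C\,v(H)^{1/2}\right)$ maximal uninduced $H$-blow-ups of $G$, as claimed.
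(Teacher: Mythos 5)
Your proposal is an outline in the style of the paper's \emph{induced}-case constructions, but it diverges from the actual proof of Proposition~\ref{prop:codeg} and has two gaps I do not think the outline can close. The paper does not install $\Theta(v(H)^{1/2})$ local gadgets: it doubles \emph{every} vertex of $H$ (so $|V(G)|=2v(H)$ exactly), adds the edge $v^{(1)}v^{(2)}$ for each $v$, keeps every edge of $H$ outside a chosen acyclic edge cover $T$ complete between the copies, and turns each edge of $T$ into a matching ($v^{(i)}\sim w^{(j)}$ iff $i=j$). The independent toggles are then the connected components of $T$, of which there are $v(H)-e(T)=e(M)$ for a maximal matching $M$, and the exponent $v(H)^{1/2}$ comes from the lemma $\alpha(H)\le v(H)-\tfrac12 v(H)^{1/2}$ (proved by double-counting cherries through the degree/codegree gap), whence $e(M)\ge \tfrac{v(H)-\alpha(H)}{2}=\Omega(v(H)^{1/2})$. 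Your stated source of the exponent --- a packing trade-off between gadget sites and ``room for the core'' --- is not the operative mechanism, and you give no argument that $\Theta(v(H)^{1/2})$ pairwise vertex-disjoint, mutually non-interfering gadget sites exist; producing them is essentially the problem of finding a large matching in $H$, which is exactly the combinatorial content your outline omits.

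Second, your maximality argument leans on the cascade/pigeonhole claims of the induced constructions (Claims~\ref{claim:disjoint} and~\ref{claim:inducedmain}), but those rely on induced-ness: there a vertex adjacent to one part and non-adjacent to a part that should be empty to it is \emph{forced} into a specific group. For uninduced blow-ups no such forcing exists, since extra edges are free; the only leverage is degree/codegree counting, namely that two vertices in the same part must have at least $\delta(H)$ common neighbours (one from each of the $\ge\delta(H)$ neighbouring parts), while in the doubled graph any two vertices of distinct origin have at most $2\,codeg_H+2<\delta(H)$ common neighbours (Observation~\ref{obs:code}). That calibration --- against $2\,codeg_H+2$, which is exactly what the hypothesis $\delta(H)>2+2\max codeg$ provides --- is what makes every toggle assignment \emph{already maximal}, so the paper needs no fibre counting at all. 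Your version of this step, ``the common neighbourhood of $A_z$ has size $<2c$, which cannot cover the remaining parts,'' does not close: the hypotheses only give $\delta(H)\ge 5$, and $2c$ may far exceed $\delta(H)$, so no contradiction follows. Relatedly, your gadget must join the two copies $v^{(1)},v^{(2)}$ by an edge to preserve $2c$-closedness (otherwise they form a non-adjacent pair with roughly $\delta(H)$ common neighbours, which may exceed $2c$), and then you must explain why both copies cannot simply be absorbed into the single part $A_v$, destroying maximality of each single state; this is precisely what the matching on $T$-edges accomplishes and it is missing from your gadget description.
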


Such a graph exists. A randomized construction is given by $H\sim G(n^{0.99}, n, n^{-0.98})$ where $n\to \infty$; this is a random bipartite graph on $A\sqcup B$ where $|A| = n^{0.99}, |B|=n$ and each edge $(a,b) \in A\times B$ is constructed with probability $n^{-0.98}$, and this graph has maximal codegree at most $3$ (which implies it is $4$-closed) with high probability.  Thus, we only need $\delta(G)> 2+2\cdot 3=8.$ The probability that $\delta(G)\leq 8$ is less than $2n\begin{pmatrix}n\\n-8\end{pmatrix}(1-n^{-0.98})^{n-8}$, which goes to zero as $n$ approaches infinity. Hence, when $n$ is large, we almost surely get a graph with all properties of $H$ specified in~\ref{prop:codeg}.

Proposition \ref{prop:codeg} leads to the following corollary regarding an infinite family with large minimum degree with respect to the maximum codegree. 
\begin{corollary}
    Fix $c > 0$. Let $\mathcal{H} = \{(H_1, U_1), (H_2, U_2),\dots\}$ be an infinite family of graphs. Assume that for each $H_i$, $H_i$ is $c/2$-closed with $\delta(H_i) > 2+2 \max\limits_{u\ne v \in V(H_i)} codeg(u,v)$. Furthermore, $ \max\limits_{u\ne v \in V(H_i)} codeg(u,v) < c$ and $\delta(H_i)\geq 5$. Then $\mathcal{H}$ is a $c$-closed exponential family. 
\end{corollary}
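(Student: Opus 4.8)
The plan is to deduce this directly from Proposition \ref{prop:codeg}, applied once to each pattern in the family, and then to repackage the output into the language of Definition \ref{def: polyexpinf}. Since there are only finitely many graphs on any fixed number of vertices, an infinite family $\mathcal{H}$ must contain patterns of arbitrarily large order; after passing to a subsequence and relabelling we may therefore assume $v(H_1) < v(H_2) < \cdots$ with $v(H_i) \to \infty$. It then suffices to exhibit, for each $i$, a $c$-closed graph $G_i$ in which the number of maximal $U_i$-clique prescribed blow-ups of $H_i$ is at least exponential in $v(G_i)$.

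Fix $i$, write $H = H_i$ and $\mu = \max_{u \ne v \in V(H)} codeg(u,v)$, and recall that $H$ is $(c/2)$-closed, $\delta(H) > 2 + 2\mu$, $\mu < c$, and $\delta(H) \ge 5$. Since $H$ satisfies $\delta(H) > 2 + 2\mu$, $\mu < c$, and $\delta(H) \ge 5$, I would feed $H$ into the construction of Proposition \ref{prop:codeg}, which yields a graph $G_i$ on exactly $2v(H_i)$ vertices in which at least $\exp(C\, v(H_i)^{1/2})$ vertex sets form maximal $H_i$-blow-ups, where $C>0$ is its universal constant. Two points then need to be handled, and the first is where I expect the real work to be. First, one must check that $G_i$ can be taken $c$-closed rather than merely $2c$-closed: the stated conclusion of Proposition \ref{prop:codeg} is weaker than needed, so one has to track how the common-neighbourhood size of a non-adjacent pair in $G_i$ depends on the parameters of $H_i$, and use that $H_i$ is $(c/2)$-closed (not just that $\mu < c$) to pin the closure of $G_i$ at $c$; the bound $\mu < c$ is used only to place $\delta(H_i) > 2 + 2\mu$ in the range the construction requires. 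Second, Proposition \ref{prop:codeg} is stated without prescriptions while $\mathcal{H}$ carries the sets $U_i$; exactly as the construction of Theorem \ref{thm: bounded} extends to the prescribed and induced versions in Theorems \ref{thm: boundedprescribed} and \ref{thm:boundedinduced}, the same construction here also produces the claimed count of maximal $U_i$-clique prescribed blow-ups, so I would invoke that extension rather than rederive it.

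Finally I would translate into Definition \ref{def: polyexpinf}. Putting $n_i := v(G_i) = 2v(H_i)$, the number of maximal $U_i$-clique prescribed blow-ups of $H_i$ in $G_i$ is at least $\exp\big(C(n_i/2)^{1/2}\big) = \exp\big((C/\sqrt 2)\, n_i^{1/2}\big)$, which is larger than $\exp(n_i^{1/3})$ for all large $i$, since $n_i \to \infty$. Discarding the finitely many small terms and reindexing, $(G_i)_{i \ge 1}$ is a sequence of $c$-closed graphs with $v(G_i) \to \infty$ such that for every $i$ some pattern of $\mathcal{H}$, namely $H_i$, satisfies that the number of its maximal $U_i$-clique prescribed blow-ups in $G_i$ is at least $e^{v(G_i)^{1/3}}$. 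Taking $\epsilon = 1/3$, this is exactly the statement that $\mathcal{H}$ is a $c$-closed exponential family.
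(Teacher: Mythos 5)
Your overall route is exactly the one the paper intends: the corollary carries no separate proof and is meant to follow by applying Proposition \ref{prop:codeg} to each $H_i$ (after noting that an infinite family must have $v(H_i)\to\infty$ along a subsequence) and then reading off Definition \ref{def: polyexpinf} with some $\epsilon<1/2$. Your treatment of the prescriptions $U_i$ is also fine and matches what the paper would do: in the construction every part of the relevant blow-ups is a singleton, hence trivially a clique, and a set that is maximal among unprescribed $H_i$-blow-ups is a fortiori maximal among $(H_i,U_i)$-blow-ups, so the count transfers verbatim.

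The step you defer --- upgrading the closure of $G_i$ from $2c$ to $c$ --- is, however, not closable by the mechanism you sketch, and it is the one place where the corollary as stated does not literally follow from the proposition. The $(c/2)$-closure of $H_i$ only bounds $codeg_{H_i}(a,b)$ for \emph{non-adjacent} pairs $a,b$, and for those pairs the copies $a^{(i)},b^{(j)}$ indeed have fewer than $c$ common neighbours in $G_i$. But the binding case in the closure analysis of Proposition \ref{prop:codeg} is a pair $a^{(i)},b^{(j)}$ with $ab\in T$ and $i\ne j$: these are non-adjacent in $G_i$ even though $a\sim_{H_i} b$, so only the codegree hypothesis $\max codeg(u,v)<c$ applies, and their common neighbourhood can have size up to $2(c-1)+2=2c$ (the extra $2$ coming from $a^{(3-i)}$ and $b^{(3-j)}$). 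Hence with the hypotheses as written one only obtains a $(2c+1)$-closed $G_i$. To genuinely get a $c$-closed graph one needs $2\max codeg(u,v)+2<c$, i.e.\ essentially the substitution $c\mapsto c/2$ throughout Proposition \ref{prop:codeg}; the hypothesis ``$\max codeg(u,v)<c$'' in the corollary appears to be an error in the paper rather than something your argument can repair. You were right to flag this as the locus of the real work, but the resolution is a strengthening of the codegree hypothesis (or a weakening of the conclusion to a $(2c+1)$-closed exponential family), not a computation exploiting the $(c/2)$-closure of $H_i$.
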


\begin{proof}[Proof of Proposition \ref{prop:codeg}]
   We find an acyclic subgraph $T$ of $H$ such that every vertex of $H$ is incident to an edge in $T$. Let $G$ be our new graph on $2v(H)$ vertices. For every vertex $v\in H$, we construct $v^{(1)}, v^{(2)} \in V(G)$.   

    \begin{itemize}
        \item If $vw \notin E(H)$, then $v^{(i)} w^{(j)} \notin E(G)$ for all $i,j=1,2$
        \item If $vw \in E(H) \setminus T$ then $v^{(i)}w^{(j)} \in E(G)$ for all $i,j$. 
        \item If $vw \in T$, then $v^{(i)}w^{(j)}\in E(G)$ if and only if $i=j$. 
        \item For all $v\in V(H)$, $v^{(1)}v^{(2)} \in E(G)$.
    \end{itemize}

    Note $G$ is $(2c+1)$-closed: choose arbitrary non-adjacent $a^{(i)}, b^{(j)} \in V(G)$, where $a,b\in V(H)$ and $i,j\in \{1,2\}$. If $x^{(k)}$ is adjacent to both $ a^{(i)}, b^{(j)}$ in $G$, then it must follow that either $x=a^{(3-i)}$ or $x=b^{(3-j)}$ or $x$ is adjacent to both $ a,b$ in $H$. There are at most $c-1$ such choices for $x$, so there are at most $2c$ choices for $x^{(k)}$.

    The key lemma is as follows:
    
    \begin{lem}
        An $H$-blow-up in $G$ that contains exactly one of $v^{(1)}, v^{(2)}$ for each $v\in H$ is a maximal $H$-blow-up.  
    \end{lem}
    
    \begin{proof}
        Let $A \subset V(G)$ be our blow-up, where $A = \bigsqcup_{v\in V(H)} A_v$ and $A_v$ is the part corresponding to the vertex $v\in V(H)$ \footnote{for $v\in V(H)$, it is not a priori true that $A_v \subseteq \{v^{(1)}, v^{(2)}\}$, but we do know if $v\sim w$ then $G[A_v, A_w]$ is a complete bipartite graph}. Suppose for each $v\in V(G)$, there exists $j\in \{1,2\}$ such that $v^{(j)} \in A$.
    \begin{observation}\label{obs:code}
    If $v\ne w$ then for all $i,j \in \{1,2\}$, if $v^{(i)}, w^{(j)} \in A$, then they must belong to different parts of a blow-up. In other words, for all $v\in V(H)$, there exists $x\in V(H)$ such that $A_v \subseteq \{x^{(1)},x^{(2)}\}$. 
    \end{observation}
    This holds because if $v^{(i)}, w^{(j)}$ belong to the same part of the blow-up, they must have at least $\delta(H)$ common neighbors. But the number of common neighbors they have in $G$ is at most $ 2 codeg_H(v,w)+2 < \delta(H)$, Contradiction. 
    
    Thus, by Observation~\ref{obs:code}, there exists a mapping $\pi \colon V(H) \to V(H)$ such that $\emptyset \ne A_v \subseteq \{ \pi(v)^{(1)}, \pi(v)^{(2)}\}$ for all $v\in V(H)$. Since for each $v$, there exists $w$ such that $A_w \cap \{v^{(1)}, v^{(2)}\} \ne \emptyset$,  $\pi$ is surjective, hence bijective. This in particular also implies that if $v^{(1)}, v^{(2)}\in A$, then there exists $w$ such that $A_w = \{ v^{(1)},v^{(2)}\}$. 
    
    Note that $\deg_H(\pi(v)) \ge \deg_H(v) $ for all $v\in V(H)$ because there exists $j\in \{1,2\}$ such that $\pi(v)^{(j)}\in A_v$, and $\pi(v)^{(j)}$ needs to be adjacent to $\deg_H(v)$ vertices in $G$ from distinct parts of the blow-up, which corresponds to $\pi(v)$ being adjacent to $\deg_H(v)$ distinct vertices in $H$. Since $\sum_{v\in V(H)} \deg(\pi(v)) = \sum_{v\in V(H)} \deg(v)$, it follows that $\deg(\pi(v)) = \deg(v)$ for all $v\in V(H)$.

    Now we will show that $|A_v|=1$ for all $v$. Assume not, then $A_v = \{ \pi(v)^{(1)}, \pi(v)^{(2)}\}$ for some $v$. Note that $\pi(v)$ is incident to an edge in $T$, say $\pi(v)w$. Then $\pi(v)^{(1)}, \pi(v)^{(2)}$ does not have a common neighbor in $\{w^{(1)}, w^{(2)}\} \supset A_{\pi^{-1}(w)}$. So $A_v$ has common neighbors in $<\deg(\pi(v)) = \deg(v)$ parts of the blow-up, which is a contradiction. This proves our lemma. 
    \end{proof}

This shows that in particular, if the following two conditions holds:
\begin{enumerate}
\item $A
_v \in \{\{v^{(1)}\},\{v^{(2)}\}\}$ for all $v\in V(H)$;
\item if $vw\in E(T)$ then $A_v = \{v^{(1)}\} \iff A_w = \{w^{(1)}\}$,
\end{enumerate}
then $\bigsqcup\limits_{v\in V(H)} A_v$ is a maximal $H$-blow-up. The number of such maximal blow-ups is at least $ 2^{v(H) - e(T)}$, because we can first choose $A_v$ from $\{\{v^{(1)}\},\{v^{(2)}\}\}$ for an arbitrary $v\in V(H)$, and for each $w$ incident to $v$ in $T$, $A_w$ is determined by $A_v$. This tells us that we can choose $v(H) - e(T)$ times, so the number of blow-ups \emph{of that form} is at least $2^{v(H)-e(T)}$.

We now want to compute $\max_T (v(H)-e(T))$ where every vertex of $H$ is incident to an edge in $T$.

\begin{lem}
    Suppose $H$ is a graph with no isolated vertices. Then $\max_T (v(H)-e(T)) $ is equal to the number of edges in a maximal matching in $H$.
\end{lem}

\begin{proof}
    Consider an arbitrary acyclic subgraph $T$ of $H$ such that every vertex of $H$ is incident to an edge in $T$. Let $M$ be a maximal matching of $T$. We claim that $v(H)-e(T)\leq e(M)$. This is because every vertex in $V(H)\setminus V(M)$ is incident to $M$ in $T$, so $e(T)\geq e(M)+|V(H)\setminus V(M)|=v(H)-e(M).$

    On the other hand, given any maximal matching $M$ of $H$, we can construct an acyclic subgraph $T$ of $H$ such that every vertex of $H$ is incident to an edge in $T$. First we put $M$ into $T$. Then, for each $v\in V(H)\setminus V(M)$, we pick an arbitrary edge between $v$ and $M$, and add this edge to $T$. 

     The conclusion readily follows.
\end{proof}

Let $M$ be a matching using edges in $H$. Note that $M$ is a maximal matching if and only if $H[V(H) \setminus V(M)]$ is an independent set. Thus, $|V(H) \setminus V(M)| \le \alpha(H) $, where $\alpha(H)$ is the size of the largest independent set in $H$. Thus, if $M$ is a maximal matching, then $ e(M) = \frac{v(M)}{2} \ge \frac{v(H)-\alpha(H)}{2}$. 

\begin{lem}
    Suppose $H$ is a graph where the minimum degree $\delta(H)$ is greater than twice the maximum codegree, and $\delta(H) \ge 5$. Then  $\alpha(H) \le n-\frac 12 n^{1/2}$, where $n = v(H)$. 
\end{lem}

\begin{proof}
    Let $V=V(H)$. Assume there exists an independent set with greater than $n-\frac 12 n^{1/2}$ vertices, call $I$. Then 

    \begin{itemize}
        \item $ e(V\setminus I) + e(V\setminus I,I) = e(H)\ge \frac n2 \delta (H)$
        \item Since $|V\setminus I | \le \frac 12 n^{1/2}$, it follows that $e(V\setminus I) \le n/8$. Thus, $e(V\setminus I,I) \ge \frac n2 \delta(H) - \frac n8.$
       
    \end{itemize}
     We compute the number of triplets $(v_1,i,v_2)$ where $i\in I$ and $v_1,v_2\in N(i), v_1\neq v_2$ in two ways: 

     On one hand, if we first fix $i$ and then choose $(v_1,v_2)$, we can see it is equal to $\sum_{i\in I} \deg(i)^2 - \deg(i) \ge \frac{(\sum_{i\in I} \deg(i))^2}{|I|} - (\sum_{i\in  I} \deg(i)) = \frac{e(V\setminus I,I)^2}{|I|} - e(V\setminus I,I) \ge n(\frac 12 \delta(H)-\frac 18)(\frac 12 \delta(H)-\frac 98)$, since $e(V\setminus I,I) \ge \frac n2(\delta (H)-\frac 14)$.

     On the other hand, it is equal to $$\sum_{(v_1,v_2)\in V\setminus I, v_1\ne v_2} codeg(v_1,v_2) \le |V\setminus I|^2 \max_{u,v\in V(H)} codeg(u,v) \le \frac n4  \frac{\delta(H)}{2}.$$ Since $\delta(H)\ge 5$, these two inequalities contradict each other, so the lemma follows.

\end{proof}
The proposition follows: $$2^{v(H)-e(T)}\geq 2^{e(M)}\geq 2^{\frac{v(H)-\alpha(H)}{2}}\geq 2^{\frac 12 n^{1/2}}.$$
\end{proof}

\section{Open Questions}

We show that for any constant $c > 0$ and any fixed graph $H$, the number of maximal (not necessarily induced) $H$-blow-ups in a $c$-closed graph $G$ is always bounded by a polynomial in the number of vertices of $G$. For induced blow-ups, we provide a complete characterization of the graphs $H$ for which the number of maximal induced $H$-blow-ups remains polynomial, and also show a sharp polynomial-versus-exponential dichotomy. We also extend our investigation to blow-ups of graphs drawn from infinite families, and prove several general results. In particular, we show that if the family consists of bounded-degree graphs, then the number of maximal blow-ups from the family in a $c$-closed graph can be exponential.

As we have seen earlier, when the patterns in an infinite family $\mathcal{H}$ have unbounded maximum degree, it becomes unclear what the correct bound should be. In particular, does there exist an non-trivial infinite family that is neither $c$-closed polynomial nor $c$-closed exponential for any fixed $c$? If every such family must fall into one of these two categories, what would a characterization theorem look like for the infinite-family case?

 In section~\ref{sec: noninduced finite} and section~\ref{sec: induced finite}, we gave a full characterization of when the number of maximal induced (non-induced) blow-ups of a single graph $H$ is polynomial (exponential) in $c$-closed graphs. This result can be generalized to count maximal blow-ups of a finite pattern family. Thus when studying the number of maximal blow-ups in infinite pattern families, it will be interested to focus on the ``truly infinite" pattern families, (or in some sense a ``minimal family") as defined below. However, it may be difficult in general to determine whether a pattern family is truly infinite.

 \begin{definition}[Truly infinite pattern family]
Let $\mathcal{H} = \{(H_1,U_1), (H_2,U_2), \dots\}$ be a family of patterns, where $H_i$ is a simple graph and $U_i \subseteq V(H_i)$ is the set of vertices in $H_i$ that are prescribed to be cliques. We say $\mathcal{H}$ is \textbf{truly infinite}, if there does not exists a finite subset $\mathcal{H'}\subsetneq \mathcal{H}$ such that the following holds: $\forall S\subset V(G)$, $\forall (H_i,U_i)\subset \mathcal{H}$, if $G[S]$ is a maximal blow-up of $H_i$, then there exists $(H_j,U_j)\in \mathcal{H'}$ such that $G[S]$ is a maximal blow-up of $H_j$.
\end{definition}

Definition~\ref{def:maxblow-up} of maximal blow-ups on a single graph can be generalized to maximal blow-ups on a family of graphs. There are potential research interest in this object.

\begin{definition}\label{def:maxblow-upfamily}
Let $\mathcal H=\{H_1,H_2,\ldots\}$ be a family of simple graphs. Let $G$ be a simple graph and $S\subset V(G)$. We call $G[S]$ a \textbf{maximal blow-up} of $\mathcal H$ in $G$, if $G[S]$ is a blow-up of some $H_i\in \mathcal H$ and there does not exist $S\subsetneq S'\subset V(G)$ and $H_j\in \mathcal H$ such that $G[S']$ is a blow-up of $H_j$.
\end{definition}

Does there exist an infinite family $\{ (H_i,U_{+,i}, U_{-,i})\}_{i\ge 1}$ that fails to form an induced $c$-closed exponential pattern family, i.e. there does not exist $\epsilon > 0$ and a sequence of $c$-closed graphs $(G_n)_{n\ge 1}$ with $v(G_n)\to \infty$, and for all $n$, 
\[ \# \{S \subset V(G_n) \colon \text{ there exists } i \text{ such that } G_n[S] \text{ is a maximal } H_i\text{-blow-up}\} \ge e^{v(G_n)^{\epsilon}} ?\]

The question is equivalent to: for every $\epsilon>0$, does there exists an $N = N(\epsilon)$ such that for all graphs $G$ with $n \ge N$ vertices, 
\[ \# \{S \subset V(G) \colon \text{ there exists } i \text{ such that } G_n[S] \text{ is a maximal } H_i\text{-blow-up}\} < e^{n^{\epsilon}}?\]

\bibliographystyle{abbrv} 
\bibliography{writeup} 
\end{document}